\newcommand{\NN}{\mathbb{N}}
\newcommand{\ZZ}{\mathbb{Z}}
\newcommand{\QQ}{\mathbb{Q}}
\newcommand{\RR}{\mathbb{R}}
\newcommand{\liff}{\Leftrightarrow}
\newcommand{\limp}{\Rightarrow}
\renewcommand{\lnot}{\neg\,}
\newcommand{\IHOL}{\mathrm{IHOL}}
\newcommand{\res}{\upharpoonright}
\newcommand{\ser}{\upharpoonleft}
\newcommand{\dom}{\mathrm{dom}}
\newcommand{\rng}{\mathrm{rng}}
\newcommand{\pow}{P}
\newcommand{\lang}{L}
\newcommand{\calU}{\mathcal{U}}
\newcommand{\calV}{\mathcal{V}}
\newcommand{\AC}{\mathrm{AC}}
\newcommand{\GMP}{\mathrm{GMP}}
\newcommand{\BP}{\mathrm{BP}}
\newcommand{\ACBP}{\mathrm{ACBP}}
\newcommand{\Co}{\mathrm{C}_\mathrm{o}}
\newcommand{\Colc}{\mathrm{C}_\mathrm{o}^\mathrm{lc}}
\newcommand{\Coc}{\mathrm{C}_\mathrm{o}^\mathrm{c}}
\newcommand{\llb}{\llbracket}
\newcommand{\rrb}{\rrbracket}
\newcommand{\Ap}{\mathrm{Ap}\,}
\newcommand{\sh}{\mathrm{sh}}
\newcommand{\Sh}{\mathrm{Sh}}
\newcommand{\leT}{\le_\mathrm{T}}
\newcommand{\eqT}{\equiv_\mathrm{T}}
\newcommand{\degT}{\mathrm{deg}_\mathrm{T}}
\newcommand{\lew}{\le_\mathrm{w}}
\newcommand{\les}{\le_\mathrm{s}}
\newcommand{\eqw}{\equiv_\mathrm{w}}
\newcommand{\wequiv}{\,\,{\vdash\!\dashv}\,\,}
\newcommand{\frakM}{\mathfrak{M}}
\newcommand{\frakA}{\mathfrak{A}}
\newcommand{\frakB}{\mathfrak{B}}
\newcommand{\degw}{\mathrm{deg}_\mathrm{w}}
\newcommand{\degs}{\mathrm{deg}_\mathrm{s}}
\newcommand{\ooo}{\mathbf{0}}
\newcommand{\aaa}{\mathbf{a}}
\newcommand{\bbb}{\mathbf{b}}
\newcommand{\ccc}{\mathbf{c}}
\newcommand{\ddd}{\mathbf{d}}
\newcommand{\imp}{\mathrm{imp}}
\newcommand{\DT}{\mathcal{D}_\mathrm{T}}
\newcommand{\Dw}{\mathcal{D}_\mathrm{w}}
\newcommand{\Ds}{\mathcal{D}_\mathrm{s}}
\newcommand{\Omegai}{\Omega_1}
\newcommand{\Xsh}{{\widehat{X}^\sh}}
\newcommand{\NNsh}{\widehat{\NN}^\sh}
\newcommand{\ZZsh}{\widehat{\ZZ}^\sh}
\newcommand{\QQsh}{\widehat{\QQ}^\sh}
\newcommand{\RRsh}{\widehat{\RR}^\sh}
\newcommand{\congQ}{\cong_\QQ}
\newcommand{\abar}{\overline{a}}
\newcommand{\bbar}{\overline{b}}
\newcommand{\cbar}{\overline{c}}
\newcommand{\dbar}{\overline{d}}
\newcommand{\nbar}{\overline{n}}
\newcommand{\Abar}{\overline{A}}
\newcommand{\Bbar}{\overline{B}}
\newcommand{\Cbar}{\overline{C}}
\newcommand{\Pbar}{\overline{P}}
\newcommand{\Qbar}{\overline{Q}}
\newcommand{\Omegabar}{\overline{\Omega}}
\newcommand{\ahat}{\widehat{a}}
\newcommand{\bhat}{\widehat{b}}
\newcommand{\chat}{\widehat{c}}
\newcommand{\xhat}{\widehat{x}}
\theoremstyle{definition}
\newtheorem{thm}{Theorem}[section]
\newtheorem{lem}[thm]{Lemma}
\newtheorem{cor}[thm]{Corollary}
\newtheorem{dfn}[thm]{Definition}
\newtheorem{rem}[thm]{Remark}
\newtheorem{exa}[thm]{Example}
\newtheorem{thmA}{Theorem}
\newtheorem{dfnA}{Definition}
\newtheorem{corA}{Corollary}
\newtheorem*{cor*A}{Corollary}
\newtheorem*{lem*A}{Lemma}
\begin{document}

\begin{center}
  \LARGE Mass problems and\\
  intuitionistic higher-order logic\\[18pt]
  \large Sankha S. Basu\\
  Department of Mathematics\\
  Pennsylvania State University\\
  University Park, PA 16802, USA\\
  \href{http://www.personal.psu.edu/ssb168}
  {http://www.personal.psu.edu/ssb168}\\
  \href{mailto:basu@math.psu.edu}{basu@math.psu.edu}\\[12pt]
  Stephen G. Simpson\footnote{Simpson's research was partially
    supported by the Eberly College of Science at the Pennsylvania
    State University, and by Simons Foundation
    Collaboration Grant 276282.}\\
  Department of Mathematics\\
  Pennsylvania State University\\
  University Park, PA 16802, USA\\
  \href{http://www.math.psu.edu/simpson}{http://www.math.psu.edu/simpson}\\
  \href{mailto:simpson@math.psu.edu}{simpson@math.psu.edu}\\[12pt]
  First draft: February 7, 2014\\
  This draft: \today\\
  {\ }
\end{center}

\begin{abstract}
  \addcontentsline{toc}{section}{Abstract} In this paper we study a
  model of intuitionistic higher-order logic which we call \emph{the
    Muchnik topos}.  The Muchnik topos may be defined briefly as the
  category of sheaves of sets over the topological space consisting of
  the Turing degrees, where the Turing cones form a base for the
  topology.  We note that our Muchnik topos interpretation of
  intuitionistic mathematics is an extension of the well known
  Kolmogorov/Muchnik interpretation of intuitionistic propositional
  calculus via Muchnik degrees, i.e., mass problems under weak
  reducibility.  We introduce a new sheaf representation of the
  intuitionistic real numbers, \emph{the Muchnik reals}, which are
  different from the Cauchy reals and the Dedekind reals.  Within the
  Muchnik topos we obtain a \emph{choice principle} $(\forall
  x\,\exists y\,A(x,y))\limp\exists w\,\forall x\,A(x,wx)$ and a
  \emph{bounding principle} $(\forall x\,\exists
  y\,A(x,y))\limp\exists z\,\forall x\,\exists y\,(y\leT(x,z)\land
  A(x,y))$ where $x,y,z$ range over Muchnik reals, $w$ ranges over
  functions from Muchnik reals to Muchnik reals, and $A(x,y)$ is a
  formula not containing $w$ or $z$.  For the convenience of the
  reader, we explain all of the essential background material on
  intuitionism, sheaf theory, intuitionistic higher-order logic,
  Turing degrees, mass problems, Muchnik degrees, and Kolmogorov's
  calculus of problems.  We also provide an English translation of
  Muchnik's 1963 paper on Muchnik degrees.
\end{abstract}

\newpage

\tableofcontents

\section{Introduction}
\label{sec:introduction}

\subsection{Intuitionism and the calculus of problems}
\label{subsec:history}

\subsubsection{Constructivism}

In the early part of the 20th century, foundations of mathematics was
dominated by Georg Cantor's set theory and David Hilbert's program of
finitistic reductionism.  The harshest critics of set theory and
Hilbert's program were the \emph{constructivists}.  Among the various
constructivist schools were \emph{intuitionism}, proposed by
L. E. J. Brouwer in 1907; \emph{predicativism}, proposed by Herman
Weyl in 1918; \emph{finitism}, proposed by Thoralf Skolem in 1923;
\emph{constructive recursive mathematics}, proposed by Andrei
Andreyevich Markov in 1950; and \emph{Bishop-style constructivism},
proposed by Errett Bishop in 1967.  Also among the constructivists
were many other prominent mathematicians including Leopold Kronecker
(1823--1891), who is sometimes regarded as ``the first
constructivist,'' Ren\'e Louis Baire (1874--1932), Emile Borel
(1871--1956), Nikolai Nikolaevich Lusin (1883--1950) and Jules Henri
Poincar\'e (1854--1913).  For more about the various schools of
constructivism and their history, see \cite{Troelstra_historyof} and
\cite[Chapter 1]{VDT}.

\subsubsection{Brouwer's intuitionism}

Intuitionism is a constructive approach to mathematics proposed by
Brouwer.  The philosophical basis of intuitionism was spelled out in
Brouwer's 1907 Ph.D.\ thesis, entitled ``On the foundations of
mathematics.''  The mathematical consequences were developed in
Brouwer's subsequent papers, 1912--1928.

The following is quoted from \cite[Chapter 1]{VDT}.
\begin{quote}
  The basic tenets of Brouwer's intuitionism are as follows.
  \begin{enumerate}
  \item Mathematics deals with mental constructions, which are
    immediately grasped by the the mind; mathematics does not consist
    in the formal manipulation of symbols, and the use of mathematical
    language is a secondary phenomenon, induced by our limitations
    (when compared with an ideal mathematician with unlimited memory
    and perfect recall), and the wish to communicate our mathematical
    constructions with others.
  \item It does not make sense to think of truth and falsity of a
    mathematical statement independently of our knowledge concerning
    the statement. A statement is true if we have a proof of it, and
    false if we can show that the assumption that there is a proof for
    the statement leads to a contradiction. For an arbitrary statement
    we can therefore not assert that it is either true or false.
  \item Mathematics is a free creation: it is not a matter of mentally
    reconstructing, or grasping the truth about mathematical objects
    existing independently of us.
  \end{enumerate}
\end{quote}
  
In Brouwer's view, mathematics allows the construction of mathematical
objects on the basis of intuition.  Mathematical objects are mental
constructs, and mathematics is independent of logic\footnote{On the
  contrary, logic is an application or part of mathematics (according
  to Brouwer).} and cannot be founded upon the axiomatic method.  In
particular, Brouwer rejected Hilbert's formalism and Cantor's set
theory.

An important feature of Brouwer's work was \emph{weak
  counterexamples}, introduced to show that certain statements of
classical mathematics are not intuitionistically acceptable. A weak
counterexample to a statement $A$ is not a counterexample in the
strict sense, but rather an argument to the effect that any
intuitionistic proof of $A$ would have to include a solution of a
mathematical problem which is as yet unsolved.

In particular, the \emph{principle of the excluded middle} (PEM),
$A\lor\lnot A$, is valid in classical logic, but to accept it
intuitionistically we would need a universal method for obtaining, for
any $A$, either a proof of $A$ or a proof of $\lnot A$, i.e., a method
for obtaining a contradiction from a hypothetical proof of $A$. But if
such a universal method were available, we would also have a method to
decide the truth or falsity of statements $A$ which have not yet been
proved or refuted (e.g., $A\equiv$ ``there are infinitely many twin
primes''), which is not the case. Thus we have a weak counterexample
to PEM.

The above argument shows that PEM is not intuitionistically
acceptable.  However, intuitionists may accept certain special cases
or consequences of PEM.  In particular, since we cannot hope to find a
proof of $\lnot(A\lor\lnot A)$, it follows that $\lnot\lnot(A\lor\lnot
A)$ is intuitionistically acceptable.

Excessive emphasis on weak counterexamples has sometimes created the
impression that intuitionism is mainly concerned with refutation of
principles of classical mathematics.  However, Brouwer introduced a
number of other innovations, such as \emph{choice sequences}; see
\cite[Chapter 3]{DUMM} and \cite[Chapters 4 and 12]{VDT}.  After 1912
Brouwer developed what has come to be known as \emph{Brouwer's
  program}, which provided an alternative perspective on foundations
of mathematics, parallel to Hilbert's program.  For more on the
history of intuitionism and Brouwer's work, see
\cite{Troelstra_historyof} and \cite[Chapter 1]{VDT}.

\subsubsection{Kolmogorov's calculus of problems}

The great mathematician Andrei Nikolaevich Kolmogorov published two
papers on intuitionism.

In Kolmogorov's 1925 paper \cite{KOL-Russian}\footnote{See also the
  English translation \cite{KOL-PEM}.} he introduces \emph{minimal
  propositional calculus}, which is strictly included in
intuitionistic propositional calculus.  Starting with minimal
propositional calculus, one can add $A\limp(\lnot A\limp B)$ to get
intuitionistic propositional calculus, and then one can add
$(\lnot\lnot A)\limp A$ to get classical propositional calculus.
Furthermore, a propositional formula $A$ is classically provable if
and only if $\lnot\lnot A$ is intuitionistically provable.  This
translation of classical to intuitionistic propositional calculus, due
to Kolmogorov \cite{KOL-Russian}, predates the double-negation
translations of G\"odel and Gentzen.

In Kolmogorov's 1932 paper \cite{KOL-German}\footnote{See also the
  English translation \cite{KOL}.} he gives a natural but non-rigorous
interpretation of intuitionistic propositional calculus, called the
\emph{calculus of problems}.  Each proposition is regarded as a
problem, and logically compound propositions are obtained by combining
simpler problems.  If $A$ and $B$ are problems, then:
\begin{enumerate}
\item $A\land B$ is the problem of solving both problem $A$ and
  problem $B$;
\item $A\lor B$ is the problem of solving either problem $A$ or
  problem $B$;
\item $A\limp B$ is the problem of solving problem $B$ given a
  solution of problem $A$, i.e., of reducing problem $B$ to problem
  $A$; and
\item $\lnot A$ is the problem of showing that problem $A$ has no
  solution;
\end{enumerate}
but Kolmogorov does not give a rigorous definition of ``problem.''
For further discussion of these papers of Kolmogorov, see \cite{COQ}.

Arend Heyting was one of Brouwer's principal students.  His primary
contribution to intuitionism was, ironical as it may sound, the
formalization of intuitionistic logic and arithmetic.  Heyting also
proposed what is now called the \emph{proof interpretation} for
intuitionistic logic.  In this interpretation, the meaning of a
proposition $A$ is given by explaining what constitutes a proof of
$A$, and proofs of a logically compound $A$ are explained in terms of
proofs of its constituents.  A version of this interpretation is
described in \cite[Chapter 1]{VDT}.

While Kolmogorov's and Heyting's work were independent of each other,
they both acknowledged similarities between the calculus of problems
and the proof interpretation.  However, they regarded these respective
interpretations as distinct.  Later, in 1958, Heyting insisted that
the two interpretations are practically the same and also extended
them to predicate calculus.  Since then, the two interpretations have
been treated as the same and are widely known as the
\emph{Brouwer/Heyting/Kolmogorov} or \emph{BHK interpretation}.
However, as pointed out in \cite{Diez2000}, there are subtle
differences between the two.

\subsubsection{Other interpretations of intuitionism}

Some other interpretations of intuitionistic propositional and
predicate calculus are as follows:
\begin{itemize}
\item Algebraic semantics, widely known as Heyting algebra semantics,
  were probably first used by Stanis{\l}aw Ja\'skowski in 1936.
\item Topological semantics were implicit in Marshall Harvey Stone's
  work published in 1937 and were introduced explicitly by Alfred
  Tarski in 1938.
\item Beth models were introduced by Evert Willem Beth in 1956.
\item Kripke models were introduced by Saul Aaron Kripke in 1965.
\end{itemize}
These interpretations provide a great many models of intuitionism with
widely varying properties.  Experts will recognize that our Muchnik
topos may be viewed from various perspectives as a Kripke model, a
topological model, and a Heyting algebra model.

\subsection{Higher-order logic and sheaf semantics}
\label{subsec:logic and sheaves}

\subsubsection{Higher-order logic}

\emph{Higher-order logic} is a kind of logic where, in addition to
quantifiers over objects, one has quantifiers over pairs of objects,
sets of objects/pairs, sets of sets of objects/pairs, functions from
objects to objects, functions from functions to functions, and so on.
This augmentation of so-called first-order logic increases its
expressive power and is a useful framework for certain foundational
studies.

Higher-order logic calls for a \emph{many-sorted} or \emph{typed}
language.  In Subsection \ref{subsec:higher-order intuitionistic
  logic} below, we provide a detailed definition of the language of
higher-order logic.  This language together with appropriate axioms
and rules of inference is sufficiently rich to permit the development
of virtually all of intuitionistic mathematics.

\subsubsection{Sheaf semantics for intuitionistic higher-order logic}

\emph{Sheaf theory} originated in the mid-20th century in a
geometrical context.  Subsequently it spread to many branches of
mathematics including complex analysis, algebraic geometry, algebraic
topology, differential equations, algebra, category theory,
mathematical logic, and mathematical physics.  Sheaf theory may be
viewed as a general tool which facilitates passage from local
properties to global properties.  For more on the history of sheaf
theory, see Gray \cite{Gray}.

The connection between sheaves and intuitionistic higher-order logic
came from several sources.  An important source was Dana Scott's
topological model of intuitionistic analysis
\cite{scott1968,scott1970}.  Another important source was
\emph{category theory}, an abstract approach to mathematics which was
introduced by Samuel Eilenberg and Saunders Mac Lane in the context of
algebraic topology.  For an introduction to category theory, see
\cite{maclane}.  Alexander Grothendieck and his coworkers gave a
general definition of sheaves over \emph{sites} (rather than merely
over topological spaces) and were thus led to a class of categories
known as \emph{Grothendieck topoi}.  Francis Lawvere realized that
these categories provide enough structure to interpret intuitionistic
higher-order logic.  In collaboration with Myles Tierney, Lawvere
developed the notion of \emph{elementary topoi}, a generalization of
Grothendieck topoi.  For more on sites and Grothedieck topoi, see
\cite{maclane/moerdijk} and \cite[Chapters 14, 15]{VDT}.  For more on
topos theory in general, see \cite{JOHN,lambek/scott}.

In this paper we avoid the complications of category theory and topos
theory.  Instead we follow the sheaf-theoretic approach of Dana Scott,
Michael Fourman, and Martin Hyland
\nocite{SLNM753}\cite{FOURHYL,FOURSCOTT,SCOTT}.  Subsection
\ref{subsec:sheaves} below provides a definition of the category
$\Sh(T)$ of sheaves over a fixed topological space $T$.  Subsection
\ref{subsec:interpretation} explains how to interpret intuitionistic
higher-order logic in $\Sh(T)$.

\subsection{Recursive mathematics and degrees of unsolvability}
\label{subsec:recursion theory}

\subsubsection{Constructive recursive mathematics}
\label{subsubsec:recursive math}

Constructive recursive mathematics (mentioned above in Subsection
\ref{subsec:history}) is a constructivist school that started in the
1930s. It is based on an informal concept of \emph{algorithm} or
\emph{effective procedure}, with the following features.
\begin{itemize}
\item An algorithm is a set of instructions of finite size. The
  instructions themselves are finite strings of symbols from a finite
  alphabet.
\item There is a computing agent (human or machine), which can react
  to the instructions and carry out the computations.
\item The computing agent has unlimited facilities for making,
  storing, and retrieving steps in a computation.
\item The computation is always carried out deterministically in a
  discrete stepwise fashion, without use of continuous methods or
  analog devices. In other words, the computing agent does not need to
  make intelligent decisions or enter into an infinite process at any
  step.
\end{itemize}
On this basis, a $k$-place partial function $f:\subseteq\NN^k\to\NN$
is said to be \emph{effectively calculable} if there is an effective
procedure with the following properties.
\begin{enumerate}
\item Given a $k$-tuple $(m_1,\ldots,m_k)$ in the domain of $f$, the
  procedure eventually halts and returns a correct value of
  $f(m_1,\ldots,m_k)$.
\item Given a $k$-tuple $(m_1,\ldots,m_k)$ not in the domain of $f$,
  the procedure does not halt and does not return a value.
\end{enumerate}

Several formalizations of this informal idea of effectively calculable
functions were developed.  Kurt Friedrich G\"odel used the
\emph{primitive recursive functions} in his famous incompleteness
proof in 1931, and then later introduced \emph{general recursive
  functions} in 1934 following a suggestion of Jacques Herbrand.
Along completely different lines, Alonzo Church introduced the
\emph{$\lambda$-calculus}, a theory formulated in the language of
$\lambda$-abstraction and application, and Haskell Brooks Curry
developed his \emph{combinatory logic}.  The equivalence of
$\lambda$-calculus with combinatory logic was proved by John Barkley
Rosser, Sr.  The equivalence of the Herbrand/G\"odel recursive
functions with the $\lambda$-definable functions was proved by Church
and by Stephen Cole Kleene in 1936.

Alan Turing in 1936--1937 defined an interesting class of algorithms,
now called \emph{Turing machines}, and argued convincingly that the
class of effectively calculable functions coincides with the class of
functions computable by Turing machines.  Independently of Turing,
Emil Leon Post developed a mathematical model for computation in 1936.
The \emph{Church/Turing thesis}, also known as Church's thesis, states
that for each of the above formalisms, the class of functions
generated by the formalism coincides with the informally defined class
of effectively calculable functions.  This was proposed in 1936 and is
now almost universally accepted, although no formal proof is possible,
because of the non-rigorous nature of the informal definition of
effective calculability.

The study of constructive recursive mathematics was continued by
Markov and his students.  Again, the functions computable by Markov
algorithms were shown to be the same as the Herbrand/G\"odel recursive
functions and the Turing computable functions.  Markov's approach to
recursive mathematics was constructive, but he explicitly accepted the
following consequence of PEM:
\begin{quote}
  ``If it is impossible that an algorithmic computation does not
  terminate, then it does terminate.''
\end{quote}
This principle, known as \emph{Markov's principle}, was rejected by
the intuitionists.  We comment further on Markov's principle in
Subsection \ref{subsec:choice-poset} below.

As noted in \cite{Troelstra_historyof}, the discovery of precise
definitions of effective calculability and the Church/Turing thesis in
the 1930's had no effect on the philosophical basis of intuitionism.
Each of these definitions describes algorithms in terms of a specific
language, which is contrary to Brouwer's view of mathematics as the
languageless activity of the ideal mathematician.  Turing's analysis
is not tied to a specific formalism, but his arguments are based on
manipulation of symbols and appeals to physical limitations on
computing.  Such arguments are incompatible with Brouwer's idea of
mathematics as a free creation.

Our discussion above is based on \cite[Chapter 1]{VDT} and on
\cite{enderton,rogers,Troelstra_historyof}.

\subsubsection{Unsolvable problems and Turing degrees}

A convincing example of a function which is not effectively calculable
was given by Turing in 1936 via the \emph{halting problem}.  Turing
proved that there is no Turing machine program which decides whether
or not a given Turing machine program will eventually halt. This was
the first example of an unsolvable decision problem.  Soon afterward,
many other mathematical decision problems were shown to be unsolvable,
for instance Hilbert's 10th problem (the problem of deciding whether a
given Diophantine equation has a solution in integers) and the word
problem for groups.

Eventually it became desirable to compare the amounts of unsolvability
inherent in various unsolvable problems.  Informally and vaguely, a
problem $A$ is said to be \emph{solvable relative to} a problem $B$ if
there exists a Turing algorithm which provides a solution of $A$ given
a solution of $B$.  If in addition $B$ is not solvable relative to
$A$, then $B$ is strictly more unsolvable than $A$, i.e., problem $B$
has a strictly greater \emph{degree of unsolvability} than problem
$A$.

The concept of oracle machines, described by Turing in 1939, gave a
means of comparing unsolvable problems.  In 1944 Emil Post introduced
the rigorous notion of \emph{Turing reducibility} and \emph{Turing
  degrees} as a formalization of degrees of unsolvability associated
with decision problems.  It was shown that the Turing degrees form an
upper semi-lattice, i.e., a partially ordered set in which any finite
set has a least upper bound.  See \cite{enderton,rogers} and
Subsection \ref{subsec:Muchnik topos} below.

\subsubsection{Mass problems}

In order to formalize Kolmogorov's calculus of problems, Yu.\
T. Medvedev \cite{medvedev} introduced mass problems.  A \emph{mass
  problem} is a subset of the Baire space $\NN^\NN=\{f\mid
f:\NN\to\NN\}$.  A mass problem is identified with its set of
solutions.  Informally, to ``solve'' a mass problem $P$ means to
``find'' or ``construct'' an element of the set
$P\subseteq\NN^\NN$. Formally, if $P$ and $Q$ are mass problems, $P$
is said to be \emph{strongly reducible} or \emph{Medvedev reducible}
to $Q$, written $P\les Q$, if there exists an effectively calculable
partial functional from the Baire space to itself which maps each
element of $Q$ to some element of $P$.  It can be shown that $\les$ is
a reflexive and transitive relation on the powerset of $\NN^\NN$. The
\emph{strong degree} or \emph{Medvedev degree} of a mass problem $P$,
denoted $\degs(P)$, is the equivalence class consisting of all mass
problems $Q$ which are \emph{strongly equivalent} to $P$, i.e., $P\les
Q$ and $Q\les P$.  Following Kolmogorov's ideas \cite{KOL-German}
concerning the calculus of problems, Medvedev proved rigorously that
the collection of all strong degrees, denoted $\Ds$, is a model of
intuitionistic propositional calculus.

Later Albert Abramovich Muchnik \cite{MUCH}\footnote{An English
  translation of Muchnik's paper is included as an appendix to this
  paper.} introduced a variant notion of reducibility for mass
problems, known as \emph{weak reducibility} or \emph{Muchnik
  reducibility}.  A mass problem $P$ is said to be weakly reducible to
a mass problem $Q$, written $P\lew Q$, if for each $g\in Q$ there
exists an effectively calculable partial functional which maps $g$ to
some $f\in P$.  Again, $\lew$ is a reflexive and transitive relation
on the powerset of $\NN^\NN$.  The \emph{weak degree} or \emph{Muchnik
  degree} of a mass problem $P$, denoted $\degw(P)$, is the
equivalence class consisting of all mass problems $Q$ which are
\emph{weakly equivalent} to $P$, i.e., $P\lew Q$ and $Q\lew P$. Still
following Kolmogorov \cite{KOL-German}, Muchnik proved that the
collection of all weak degrees, denoted $\Dw$, is a model of
intuitionistic propositional calculus.

Thus each of $\Dw$ and $\Ds$ provides a rigorous implementation of
Kolmogorov's non-rigorous calculus of problems.  Muchnik \cite{MUCH}
says that the difference between weak and strong reducibility of mass
problems is analogous to the difference between proving the existence
of a solution of a differential equation versus effectively finding
such a solution.

Subsection \ref{subsec:Muchnik topos} below provides further details
on mass problems and Muchnik degrees.  For a more extensive
discussion, see \cite{masseff}.  For more on Muchnik degrees and their
relationship to intuitionistic propositional calculus, see
\cite{Kuyper2013,SIMPSON5,SOTER1}.

The purpose of this paper is to extend Muchnik's interpretation of
intuitionistic propositional calculus to intuitionistic higher-order
logic.  The extension is defined in terms of a sheaf model based on
the Muchnik degrees.  We call our sheaf model \emph{the Muchnik
  topos}.  We feel that our study of the Muchnik topos helps to
strengthen the connection between two important subjects, intuitionism
and degrees of unsolvability.

There is another line of research, known as \emph{realizability},
which was initiated by Kleene in 1945 \cite{Kleene1945} and further
developed by other researchers, especially Martin Hyland and Jaap van
Oosten.  Both the realizability interpretation and our Muchnik topos
interpretation provide close connections between intuitionism and
recursion theory.  However, these two interpretations are quite
different.  One may draw the following analogy:
\[
\frac{\mbox{Medvedev reducibility}}{\mbox{realizability
    topos}}=\frac{\mbox{Muchnik reducibility}}{\mbox{Muchnik topos}}.
\]
For a historical account and survey of realizability, see
\cite{vanOosten2002}.  For recent work on the realizability topos, see
\cite{vanOosten2013,vanOosten2008}.

\subsection{Outline of this paper}

The plan of this paper is as follows.

Sections \ref{sec:sheaves and logic} through \ref{sec:Numbers} consist
of background material concerning sheaf models and intuitionism.  In
Section \ref{sec:sheaves and logic} we describe \emph{sheaves}
(a.k.a., sheaves of sets) over topological spaces, and we explain how
the sheaves over any fixed topological space form a model of
intuitionistic higher-order logic.  In Section \ref{sec:Sh(K)} we
discuss sheaf models over topological spaces of a particular kind,
namely, \emph{poset spaces}.  We also discuss a \emph{choice
  principle} which fails in some sheaf models but which holds in sheaf
models over poset spaces and over the Baire space.  In Section
\ref{sec:Numbers} we explain how the various number systems and the
Baire space are standardly represented as sheaves within sheaf models
of intuitionistic mathematics.

The heart of this paper is Section \ref{sec:Muchnik topos}.  In
Subsection \ref{subsec:Muchnik topos} we review the definitions of
Turing degrees and Muchnik degrees, and we note that Muchnik degrees
can be identified with upwardly closed sets of Turing degrees.  We
then define the \emph{Muchnik topos} to be the sheaf model over the
poset of Turing degrees.  In Subsection \ref{subsec:Muchnik reals} we
introduce a new representation of the intuitionistic real number
system, which we call the \emph{Muchnik reals}.  The idea is that a
Muchnik real ``comes into existence'' only when we have enough Turing
oracle power to compute it.  In Subsection \ref{subsec:ACBP} we prove
a choice principle and a bounding principle for the Muchnik reals.
Thus it emerges that intuitionistic analysis based on the Muchnik
reals bears some formal similarity to recursive analysis.

In an Appendix we provide an English translation of Muchnik's paper
\cite{MUCH}.  This is the paper where Muchnik defined the Muchnik
degrees and used them to interpret intuitionistic propositional
calculus along the lines which had suggested by Kolmogorov.  This
paper \cite{MUCH} is important for us, because our Muchnik topos
interpretation may be viewed as a natural extension of Muchnik's
interpretation, from intuitionistic propositional calculus
\cite[Section 1]{MUCH} to intuitionistic mathematics as a whole.

\section{Sheaves and intuitionistic higher-order logic}
\label{sec:sheaves and logic}

In this section we provide background material on sheaf theory and
intuitionistic higher-order logic.  Our main references are
\cite{basu} and \cite[Chapter 14]{VDT}.

\subsection{Sheaves over a topological space}
\label{subsec:sheaves}

\begin{dfn}\label{dfn:-sheaf}
  Let $T$ be a topological space.  Let $\Omega=\{U\subseteq T\mid U
  \hbox{ is open}\}$.  A \emph{sheaf} over $T$ is an ordered triple
  $M=(M,E_M,\ser_M)$ (we omit the subscripts on $E$ and $\ser$ when
  there is no chance of confusion), where $M$ is a set and $E$ and
  $\ser$ are functions, $E:M\to\Omega$ and $\ser\,:M\times\Omega\to
  M$, with the following properties.
  \begin{enumerate}
  \item\label{sheaf1}$a\ser E(a)=a$ for all $a\in M$.
  \item\label{sheaf2}$E(a\ser U)=E(a)\cap U$ for all $a\in M$ and all
    $U\in\Omega$.
  \item\label{sheaf3}$(a\ser U)\ser V=a\ser(U\cap V)$ for all $a\in M$
    and all $U,V\in\Omega$.
  \item\label{sheaf4}$M$ is partially ordered by letting $a\le b$ if
    and only if $a=b\ser E(a)$.
  \item\label{sheaf5}Say that $a,b\in M$ are \emph{compatible} if
    $a\ser E(b)=b\ser E(a)$. Say that $C\subseteq M$ is
    \emph{compatible} if the elements of $C$ are pairwise
    compatible. Then, any compatible set $C\subseteq M$ has a
    \emph{supremum} or least upper bound with respect to $\le$,
    denoted $\sup C$. That is, for all $d\in M$ we have $\sup C\le d$
    if and only if $a\le d$ for all $a\in C$.
  \end{enumerate}
  Elements of a sheaf $M$ are called \emph{sections} of $M$. A
  \emph{global section} is a section $a$ such that $E(a)=T$. The
  operations $E$ and $\ser$ are called \emph{extent} and
  \emph{restriction} respectively.  Thus, for any $a\in M$ and
  $U\in\Omega$, $E(a)\in\Omega$ is the \emph{extent} of $a$ and $a\ser
  U\in M$ is the \emph{restriction} of $a$ to $U$.
\end{dfn}

\begin{exa}\label{exa:-partialcontinuous}
  A good example of a sheaf over $T$ is
  \[
  \Co(T,X)=\{a:\dom(a)\to X\mid\dom(a)\in\Omega,\, a\hbox{ is
    continuous}\}
  \]
  where $X$ is any topological space, with $E$ and $\ser$ given by
  $E(a)=\dom(a)=$ the domain\footnote{For any function $a$ we write
    $\dom(a)=$ the domain of $a$, and $\rng(a)=$ the range of $a$.} of
  $a$, and $a\ser U=a\res U=$ the restriction of $a$ to
  $U\cap\dom(a)\in\Omega$.
\end{exa}

\begin{exa}
  $\Omega$ itself is a sheaf over $T$, with $E(U)=U$ and $U\ser
  V=U\cap V$ for all $U,V\in\Omega$.  Note that
  $\Omega\cong\Co(T,\{0\})$ where $\{0\}$ is the one-point space.
\end{exa}

\begin{exa}\label{exa:-pomega}
  Let $T$ and $\Omega$ be as in Definition \ref{dfn:-sheaf}.  We
  define
  \[
  \Omegai=\{(V,U)\mid V,U\in\Omega,\,V\subseteq U\}
  \]
  with $E$ and $\ser$ given by $E((V,U))=U$ and $(V,U)\ser W=(V\cap
  W,U\cap W)$ for all $(V,U)\in\Omegai$ and all $W\in\Omega$.  It can
  be shown that $\Omegai$ is a sheaf over $T$.  In fact,
  $\Omegai\cong\Co(T,S)$ where $S$ is the \emph{Sierpi\'nski space},
  i.e., the topological space $\{0,1\}$ with open sets
  $\emptyset,\{1\},\{0,1\}$.  For details see \cite[pages
  16--17]{basu}.
\end{exa}

\begin{dfn}
  Let $(M,E_M,\ser_M)$ and $(N,E_N,\ser_N)$ be sheaves over $T$. We
  say that $(N,E_N,\ser_N)$ is a \emph{subsheaf} of $(M,E_M,\ser_M)$
  if $N\subseteq M$ and $E_N$ and $\ser_N$ are inherited from $M$,
  i.e., $E_N(a)=E_M(a)$ and $a\ser_N U=a\ser_M U$ for all $a\in N$ and
  all $U\in\Omega$.
\end{dfn}

\begin{exa}\label{exa:-locallyconstantpartiallycontinuous}
  Let $X$ be a topological space.  A function $a$ from a subset of $T$
  into $X$ is said to be \emph{locally constant} if for every
  $t\in\dom(a)$ there exists an open set $V\in\Omega$ such that $t\in
  V$ and $a$ is constant on $V\cap\dom(a)$.  Clearly locally constant
  functions are continuous.  Let
  \[
  \Colc(T,X)=\{a\in\Co(T,X)\mid a\hbox{ is locally constant}\}.
  \]
  Then $\Colc(T,X)$ is a subsheaf of $\Co(T,X)$.  However,
  \[
  \Coc(T,X)=\{a\in\Co(T,X)\mid a\hbox{ is constant}\}
  \]
  is in general not a sheaf, hence not a subsheaf of $\Colc(T,X)$.
\end{exa}

\begin{lem}\label{lem:-Eresjoin}
  Let $M$ be a sheaf over $T$.
  \begin{enumerate}
  \item For all $a,b\in M$, if $a\le b$ then $E(a)\subseteq E(b)$.
  \item If $C$ is a compatible subset of $M$, then $c=\sup C$ if and
    only if $E(c)=\bigcup_{a\in C}E(a)$ and $a\le c$ for all $a\in C$.
  \item If $\{U_i\mid i\in I\}$ is a family of open subsets of $T$,
    and if $c\in M$, then $\{c\ser U_i\mid i\in I\}$ is a compatible
    subset of $M$ and $\sup_{i\in I}(c\ser U_i)=c\ser\bigcup_{i\in
      I}U_i$.
  \item Every bounded subset of $M$ is compatible and has a least
    upper bound.
  \end{enumerate}
\end{lem}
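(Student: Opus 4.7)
The plan is to prove the four parts in the order (1), (3), (2), (4), since (3) is a lemma used to establish the harder direction of (2).

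For (1), I would simply unfold the definition: $a \le b$ means $a = b \ser E(a)$, so applying the extent function and using sheaf axiom (\ref{sheaf2}) gives $E(a) = E(b \ser E(a)) = E(b) \cap E(a)$, which is just the statement $E(a) \subseteq E(b)$.

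For (3), I would first verify compatibility of $\{c \ser U_i : i \in I\}$ by a direct computation: both $(c \ser U_i) \ser E(c \ser U_j)$ and $(c \ser U_j) \ser E(c \ser U_i)$ reduce via axioms (\ref{sheaf2}) and (\ref{sheaf3}) to $c \ser (E(c) \cap U_i \cap U_j)$. Then sheaf axiom (\ref{sheaf5}) guarantees the supremum $s$ exists. To show $s = c \ser \bigcup_i U_i =: c'$, I would first verify that $c \ser U_i \le c'$ for each $i$ (a short calculation using $U_i \subseteq \bigcup_j U_j$ and axioms (\ref{sheaf1})--(\ref{sheaf3})), whence $s \le c'$; then observe via (1) that $E(c \ser U_i) = E(c) \cap U_i \subseteq E(s)$, so $E(c') = E(c) \cap \bigcup_i U_i = \bigcup_i E(c \ser U_i) \subseteq E(s) \subseteq E(c')$, giving $E(s) = E(c')$. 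Finally $s \le c'$ and equal extents force $c' = c' \ser E(s) = s \ser E(s) = s$ by axioms (\ref{sheaf1}) and (\ref{sheaf4}).

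For (2), the forward direction is similar: if $c = \sup C$ then each $E(a) \subseteq E(c)$ by (1), so $\bigcup_{a \in C} E(a) \subseteq E(c)$; to see equality, restrict $c$ to $\bigcup_{a \in C} E(a)$, check that this restriction is still an upper bound of $C$, and conclude by leastness of $c$. For the backward direction, given any upper bound $d$ of $C$, write each $a = d \ser E(a)$, apply part (3) to the family $\{d \ser E(a) : a \in C\} = C$ to obtain $\sup C = d \ser \bigcup_a E(a) = d \ser E(c)$, and then show this sup equals $c$ itself by a short extent-comparison argument; this yields $c = d \ser E(c)$, i.e., $c \le d$. The main subtlety here is that one cannot assume a priori that $c$ equals $\sup C$; one must instead compare extents using part (1) to identify the two.

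For (4), if $B$ is bounded above by $d$, then for any $a,b \in B$ we have $a = d \ser E(a)$ and $b = d \ser E(b)$, so $a \ser E(b) = d \ser (E(a) \cap E(b)) = b \ser E(a)$ by axiom (\ref{sheaf3}), giving compatibility; the existence of a least upper bound then follows immediately from sheaf axiom (\ref{sheaf5}). I expect the main obstacle to be the backward direction of (2), where keeping track of which facts about suprema are already in hand (as opposed to being the thing to be proved) requires some care; routing everything through part (3) is the cleanest way to avoid circularity.
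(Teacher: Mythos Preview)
Your proposal is correct. The paper itself does not give a proof of this lemma beyond declaring it ``straightforward'' and referring to \cite[pages 13--16]{basu}, so there is no detailed argument to compare against; your route---establishing (3) first and then using it to handle the backward direction of (2)---is a clean way to organize the verification, and your treatment of (1) and (4) is exactly the expected one. One minor simplification in your backward direction of (2): once you have shown via (3) that $\sup C = d\ser E(c)$ for every upper bound $d$, you can simply take $d=c$ to get $\sup C = c\ser E(c) = c$ immediately, which makes the separate ``extent-comparison argument'' unnecessary.
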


\begin{proof}
  The proof is straightforward.  See \cite[pages 13--16]{basu}.
\end{proof}

\begin{dfn}
  Let $M$ and $N$ be sheaves over $T$.
  \begin{enumerate}
  \item The \emph{product sheaf} is
    \[
    M\times N=\{(a,b)\mid a\in M,\,b\in N,\,E(a)=E(b)\}
    \]
    with $E$ and $\ser$ given by $E((a,b))=E(a)$ and $(a,b)\ser
    U=(a\ser U,b\ser U)$.
  \item For all $U\in\Omega$ the \emph{restriction sheaf} is
    \[
    M\ser U=\{a\ser U\mid a\in M\}=\{a\in M\mid E(a)\subseteq U\},
    \]
    with $E$ and $\ser$ inherited from $M$.
  \item A \emph{sheaf morphism} $M\stackrel{\varphi}{\to}N$ is a
    mapping $\varphi:M\to N$ satisfying $E(\varphi(a))=E(a)$ and
    $\varphi(a\ser U)=\varphi(a)\ser U$ for all $a\in M$ and all
    $U\in\Omega$.
  \item The \emph{function sheaf} is
    \[
    N^M=\{(\varphi,U)\mid U\in\Omega,\,M\ser
    U\stackrel{\varphi}{\to}N\ser U\}
    \]
    with $E$ and $\ser$ defined by $E((\varphi,U))=U$ and
    $(\varphi,U)\ser V=(\varphi\ser V,U\cap V)$ where $(\varphi\ser
    V)(a)=\varphi(a)\ser V$ for all $a\in M\ser(U\cap V)$ and all
    $V\in\Omega$.
  \end{enumerate}
\end{dfn}

\begin{rem}
  It can be shown that the product sheaf, the restriction sheaf, and
  the function sheaf are indeed sheaves over $T$.  For details see
  \cite[pages 20--28]{basu}.
\end{rem}

\begin{dfn}\label{dfn:-powersheaf1}
  For any sheaf $M$ we define the \emph{power sheaf} $\pow(M)$ to be
  the function sheaf $\Omegai^M$ where $\Omegai$ is as in Example
  \ref{exa:-pomega}.
\end{dfn}

\begin{rem}
  An alternative definition of the power sheaf appears in
  \cite{FOURSCOTT,VDT}.  It can be shown that this alternative
  definition is equivalent to our Definition \ref{dfn:-powersheaf1}.
  For details see \cite[pages 28--35]{basu}.
\end{rem}

\begin{thm}
  For any sheaf $M$ there is a natural one-to-one correspondence
  between the subsheaves of $M$ and the global sections of $\pow(M)$.
\end{thm}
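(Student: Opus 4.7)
The plan is to define the correspondence explicitly and then verify that the two constructions are inverse to each other. First I would unpack what a global section of $\pow(M)=\Omegai^M$ amounts to: it is a sheaf morphism $\varphi:M\to\Omegai$ (since restricting to extent $T$ does nothing), and the condition $E(\varphi(a))=E(a)$ forces $\varphi(a)=(V_\varphi(a),E(a))$ for a uniquely determined open set $V_\varphi(a)\subseteq E(a)$. Naturality $\varphi(a\ser U)=\varphi(a)\ser U$ then translates to the single equation $V_\varphi(a\ser U)=V_\varphi(a)\cap U$.

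Next I would define the two maps. Given a subsheaf $N\subseteq M$, put
\[
\varphi_N(a)\;=\;\bigl(V_N(a),E(a)\bigr),\qquad V_N(a)\;=\;\bigcup\{U\in\Omega\mid U\subseteq E(a),\ a\ser U\in N\},
\]
so $\varphi_N(a)$ is the largest restriction of $a$ that lies in $N$. Conversely, given a global section represented by $\varphi:M\to\Omegai$, put
\[
N_\varphi\;=\;\{a\in M\mid V_\varphi(a)=E(a)\}.
\]
I would then check that these are well defined. For $\varphi_N$ I need $V_N(a\ser W)=V_N(a)\cap W$, which follows because $N$ is closed under restriction (by being a subsheaf). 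For $N_\varphi$ I need (i) closure under $\ser$, which is immediate from $V_\varphi(a\ser U)=V_\varphi(a)\cap U$; and (ii) closure under suprema of compatible families, for which I use Lemma \ref{lem:-Eresjoin}(2): if $C\subseteq N_\varphi$ is compatible with $c=\sup C$, then for each $a\in C$ we have $a=c\ser E(a)$, so $E(a)=V_\varphi(a)=V_\varphi(c)\cap E(a)$, giving $E(a)\subseteq V_\varphi(c)$; taking unions yields $E(c)=\bigcup_{a\in C}E(a)\subseteq V_\varphi(c)\subseteq E(c)$.

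Finally I would verify the two compositions. For $N_{\varphi_N}=N$: if $a\in N$ then $U=E(a)$ witnesses $V_N(a)=E(a)$; conversely, if $V_N(a)=E(a)$, then the family $\{a\ser U\mid U\subseteq E(a),\ a\ser U\in N\}$ is compatible with supremum $a\ser V_N(a)=a\ser E(a)=a$ by Lemma \ref{lem:-Eresjoin}(1)--(3), and this supremum lies in $N$ because $N$ is closed under compatible suprema. For $\varphi_{N_\varphi}=\varphi$: taking $U=V_\varphi(a)$, one has $V_\varphi(a\ser U)=V_\varphi(a)\cap U=U=E(a\ser U)$, so $a\ser U\in N_\varphi$, giving $V_\varphi(a)\subseteq V_{N_\varphi}(a)$; conversely, any $U\subseteq E(a)$ with $a\ser U\in N_\varphi$ satisfies $U=V_\varphi(a\ser U)=V_\varphi(a)\cap U$, so $U\subseteq V_\varphi(a)$.

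I do not expect any serious obstacle; the only place that requires genuine use of the sheaf axioms (rather than pure unpacking) is the closure of $N_{\varphi_N}$ under compatible suprema and the recovery $a\ser V_N(a)=a$, both of which are handled cleanly by Lemma \ref{lem:-Eresjoin}. Naturality of the bijection in $M$ (with respect to sheaf morphisms $M\to M'$) is then a routine consequence of how $\varphi_N$ and $N_\varphi$ are defined in terms of $\ser$ and $E$ alone.
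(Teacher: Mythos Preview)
Your argument is correct and is exactly the standard explicit construction. The paper itself does not give a proof but simply refers to \cite[pages 35--39]{basu}; the argument there proceeds along the same lines you outline, identifying a global section of $\pow(M)$ with a sheaf morphism $M\to\Omegai$, extracting the open set $V_\varphi(a)\subseteq E(a)$, and running the two inverse constructions you describe.
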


\begin{proof}
  See \cite[pages 35--39]{basu}.
\end{proof}

\begin{rem}
  Given a topological space $T$, let $\Sh(T)$ be the category whose
  objects are the sheaves over $T$ and whose morphisms are the sheaf
  morphisms over $T$.  The category $\Sh(T)$ is one of the most basic
  examples of a topos.
\end{rem}

\subsection{The language of higher-order logic}
\label{subsec:higher-order intuitionistic logic}

We now describe a many-sorted language $\lang$ for intuitionistic
higher-order logic.

\begin{dfn}\label{dfn:lang}
  The language $\lang$ is defined as follows.
  \begin{enumerate}
  \item The \emph{sorts} of $\lang$ are generated as follows.
    \begin{enumerate}
    \item There is a collection of \emph{ground
        sorts}.\footnote{A.k.a., \emph{basic sorts} or \emph{primitive
          sorts}.}
    \item If $\sigma$ and $\tau$ are sorts, then so is
      $\sigma\times\tau$, the \emph{product sort} of $\sigma$ and
      $\tau$.
    \item If $\sigma$ and $\tau$ are sorts, then so is
      $\sigma\to\tau$, the \emph{function sort} from $\sigma$ to
      $\tau$.
    \item If $\sigma$ is a sort, then so is $P\sigma$, the \emph{power
        sort} of $\sigma$.
    \end{enumerate}
  \item The symbols of $\lang$ are:
    \begin{enumerate}
    \item for each sort $\sigma$, an infinite supply of
      \emph{variables} $x^\sigma,y^\sigma,\ldots$;
    \item for each sort $\sigma$, an \emph{existence predicate}
      $E^\sigma$ of type $(\sigma)$, an \emph{equality predicate}
      $=^\sigma$ of type $(\sigma,\sigma)$ , and a \emph{membership
        predicate} $\in^\sigma$ of type $(\sigma,P\sigma)$;
    \item for all sorts $\sigma$ and $\tau$, a \emph{pairing operator}
      $\pi^{\sigma,\tau}$ of type $(\sigma,\tau,\sigma\times\tau)$ and
      \emph{projection operators} $\pi_1^{\sigma,\tau}$ and
      $\pi_2^{\sigma,\tau}$ of types $(\sigma\times\tau,\sigma)$ and
      $(\sigma\times\tau,\tau)$ respectively, and an \emph{application
        operator} $\Ap^{\sigma,\tau}$ of type
      $(\sigma\to\tau,\sigma,\tau)$;
    \item \emph{propositional connectives}
      $\lnot,\land,\lor,\limp,\liff$;
    \item \emph{quantifiers} $\forall,\exists$.
    \end{enumerate}
    When there is no danger of confusion, we may omit superscripts
    indicating sorts and types.
  \item The \emph{terms} of $\lang$ are generated as follows.
    \begin{enumerate}
    \item Each variable of sort $\sigma$ is a term of sort $\sigma$.
    \item If $s$ and $t$ are terms of sort $\sigma$ and $\tau$
      respectively, and if $\pi$ is of type
      $(\sigma,\tau,\sigma\times\tau)$, then $\pi st$ is a term of
      sort $\sigma\times\tau$.
    \item If $r$ is a term of sort $\sigma\times\tau$, and if
      $\pi_1,\pi_2$ are of type $(\sigma\times\tau,\sigma)$ and
      $(\sigma\times\tau,\tau)$ respectively, then $\pi_1r$ and
      $\pi_2r$ are terms of sort $\sigma$ and $\tau$ respectively.
    \item If $s$ and $t$ are terms of sort $\sigma$ and
      $\sigma\to\tau$ respectively, and if $\Ap$ is of type
      $(\sigma\to\tau,\sigma,\tau)$, then $\Ap ts$ is a term of sort
      $\tau$.  We usually write $ts$ instead of $\Ap ts$.
    \end{enumerate}
  \item The \emph{atomic formulas} of $\lang$ are:
    \begin{enumerate}
    \item $r=s$, where $r$ and $s$ are terms of sort $\sigma$ and $=$
      is of type $(\sigma,\sigma)$;
    \item $s\in t$, where $s$ and $t$ are terms of sort $\sigma$ and
      $P\sigma$ respectively, and $\in$ is of type $(\sigma,P\sigma)$;
    \item $Es$, where $s$ is a term of sort $\sigma$ and $E$ is of
      type $(\sigma)$.
    \end{enumerate}
  \item The \emph{formulas} of $\lang$ are generated as follows.
    \begin{enumerate}
    \item Each atomic formula is a formula.
    \item If $A,B$ are formulas then so are $\lnot A$, $A\land B$,
      $A\lor B$, $A\limp B$, $A\liff B$.
    \item If $A$ is a formula and $x$ is a variable, then $\forall
      x\,A$ and $\exists x\,A$ are formulas.
    \end{enumerate}
  \end{enumerate}
\end{dfn}

\begin{dfn}
  The \emph{complexity} of a formula $A$ is the number of occurrences
  of propositional connectives $\lnot,\land,\lor,\limp,\liff$ and
  quantifiers $\forall,\exists$ in $A$.  An occurrence of a variable
  $x^\sigma$ in $A$ is said to be \emph{bound in $A$} if it is within
  the scope of a quantifier $\forall x^\sigma$ or $\exists x^\sigma$
  in $A$. A variable $x^\sigma$ is said to be \emph{free in $A$} if at
  least one occurrence of $x^\sigma$ in $A$ is not bound in $A$.  A
  formula $A$ is called a \emph{sentence} if no variables are free in
  $A$.
\end{dfn}

\begin{rem}
  For a more extensive discussion, see \cite{VDAL1}.  We could include
  additional predicates and operators in $\lang$, but they are not
  needed for our purpose.
\end{rem}

\subsection{Sheaf models of intuitionistic mathematics}
\label{subsec:interpretation}

In this subsection we explain how sheaves over topological spaces
provide models of intuitionistic higher-order logic and intuitionistic
mathematics.

\begin{dfn}\label{dfn:-lang(mu)}
  Let $T$ be a topological space.  Let $\mu$ be a mapping which
  assigns to each ground sort $\sigma$ of $\lang$ a sheaf $M_\sigma$
  over $T$.  We inductively extend $\mu$ to the compound sorts of
  $\lang$ by letting $M_{\sigma\times\tau}=M_\sigma\times M_\tau$
  (product sheaf), $M_{\sigma\to\tau}=M_\tau^{M_\sigma}$ (function
  sheaf), and $M_{P\sigma}=\pow(M_\sigma)$ (power sheaf).  For each
  sort $\sigma$ of $\lang$ and each section $a\in M_\sigma$, we extend
  $\lang$ by adding a constant symbol $a=a^\sigma$ of sort $\sigma$,
  which is now also a term of sort $\sigma$.  The extended language is
  denoted $\lang(\mu)$.  A term of $\lang(\mu)$ is said to be
  \emph{closed} if it contains no variables.
\end{dfn}

\begin{dfn}\label{dfn:llbrrb}
  To each $\lang(\mu)$-sentence $A$ we assign a \emph{truth value}
  $\llb A\rrb\in\Omega$.
  \begin{enumerate}
  \item To each closed $\lang(\mu)$-term $s$ of sort $\sigma$, we
    assign a \emph{value} $\llb s\rrb\in M_\sigma$.
    \begin{enumerate}
    \item If $a\in M_\sigma$ let $\llb a\rrb=a$.
    \item If $s$ and $t$ are closed terms of sorts $\sigma$ and $\tau$
      respectively, let $\llb\pi st\rrb=(\llb s\rrb\ser E(\llb
      t\rrb),\llb t\rrb\ser E(\llb s\rrb))$.
    \item If $r$ is a closed term of sort $\sigma\times\tau$, then
      $\llb r\rrb=(a,b)$ for some $(a,b)\in M_\sigma\times M_\tau$ and
      we let $\llb\pi_1r\rrb=a$ and $\llb\pi_2r\rrb=b$.
    \item Suppose $t$ is a closed term of sort $\sigma\to\tau$ with
      $\llb t\rrb=(\varphi,U)\in M_\tau^{M_\sigma}$. If $s$ is a
      closed term of sort $\sigma$, let $\llb\Ap ts\rrb=\llb
      ts\rrb=\varphi(\llb s\rrb\ser U)$.
    \end{enumerate}
  \item For atomic $\lang(\mu)$-sentences $A$, we define $\llb
    A\rrb\in\Omega$ as follows.
    \begin{enumerate}
    \item If $r$ and $s$ are closed terms of sort $\sigma$, let
      \[
      \llb r=s\rrb=\bigcup\{U\in\Omega\mid U\subseteq E(\llb
      r\rrb)\cap E(\llb s\rrb),\,\llb r\rrb\ser U=\llb s\rrb\ser U\}.
      \]
    \item If $s$ is a closed term of sort $\sigma$, let $\llb E^\sigma
      s\rrb=\llb s=s\rrb=E(\llb s\rrb)$.
    \item If $s$ is a closed term of sort $\sigma$ and $t$ is a closed
      term of sort $P\sigma$ with $\llb
      t\rrb=(\varphi,U)\in\pow(M_\sigma)=\Omegai^{M_\sigma}$, let
      $\llb s\in t\rrb=V$ where $\varphi(\llb s\rrb\ser U)=(V,E(\llb
      s\rrb)\cap U)$.
    \end{enumerate}
  \item For non-atomic $\lang(\mu)$-sentences $A$, we define $\llb
    A\rrb\in\Omega$ by induction on the complexity of $A$, using the
    notation $S^\circ=$ interior of $S$.
    \begin{enumerate}
    \item Propositional connectives:
      \[
      \begin{array}{l}
        \llb\lnot A\rrb=(T\setminus\llb A\rrb)^\circ,\\[8pt]
        \llb A\land B\rrb=\llb A\rrb\cap\llb B\rrb,\quad
        \llb A\lor B\rrb=\llb A\rrb\cup\llb B\rrb,\\[8pt]
        \llb A\limp B\rrb=(\llb A\rrb\limp\llb B\rrb)
        \hbox{ where }(U\limp V)=((T\setminus U)\cup
        V)^\circ,\\[8pt]
        \llb A\liff B\rrb=\llb A\limp B\rrb\cap\llb B\limp A\rrb.
      \end{array}
      \]
    \item Quantifiers:
      \[
      \begin{array}{l}
        \llb\exists
        x^\sigma\,A(x^\sigma)\rrb=\displaystyle\bigcup_{a\in
          M_\sigma}\llb Ea\land A(a)\rrb,\\[16pt]
        \llb\forall
        x^\sigma\,A(x^\sigma)\rrb=\left(\displaystyle\bigcap_{a\in
            M_\sigma}\llb Ea\limp A(a)\rrb\right)^\circ.
      \end{array}
      \]
    \end{enumerate}
  \end{enumerate}
\end{dfn}

\begin{dfn}\label{dfn:Sh(T,mu)}
  For $\lang(\mu)$-sentences $A$ we write $\Sh(T,\mu)\models A$ to
  mean that $\llb A\rrb=T$.  An $\lang$-formula $A$ is said to be
  \emph{valid for sheaf models} if for all topological spaces $T$ and
  all $\mu:\sigma\mapsto M_\sigma$ as above, $\Sh(T,\mu)\models$ the
  universal closure of $A$.
\end{dfn}

The following theorem says that the axioms and rules of intuitionistic
higher-order logic are valid for sheaf models.  Let $\IHOL$ be the
formal system of intuitionistic higher-order logic as formulated in
\cite{SCOTT} and \cite[Chapter 14]{VDT}.
\begin{thm}\label{thm:-IHOL}
  The axioms and rules of $\IHOL$ are valid for sheaf models.
\end{thm}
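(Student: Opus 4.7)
The plan is to verify, axiom by axiom and rule by rule, that every theorem of $\IHOL$ receives truth value $T$ in any sheaf model $\Sh(T,\mu)$. The argument divides cleanly into three layers corresponding to the propositional, first-order, and higher-order fragments of $\IHOL$, each building on the previous, plus a preliminary ``support lemma'' that controls the interaction of truth values with extents and restrictions.

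The first layer is essentially automatic: the lattice $\Omega$ of open subsets of $T$ is a complete Heyting algebra under $\cap$, $\cup$, the operations $U\limp V=((T\setminus U)\cup V)^\circ$ and $\lnot U=(T\setminus U)^\circ$, and arbitrary unions and interior-of-intersections. The clauses of Definition~\ref{dfn:llbrrb} for $\land,\lor,\limp,\lnot,\liff$ apply exactly these Heyting operations to truth values, so every axiom and rule of intuitionistic propositional calculus is validated by the general soundness of complete Heyting algebras for IPC; I would record this as a preliminary lemma and then reduce the rest of the proof to checking the quantifier, equality, comprehension, pairing, and application axioms.

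For the first-order layer, the key preparatory fact is a support lemma: for every $\lang(\mu)$-sentence $A(a)$ and every open $U$, $\llb A(a)\rrb\subseteq E(\llb a\rrb)$ and $\llb A(a\ser U)\rrb=\llb A(a)\rrb\cap U$. This is proved by induction on the complexity of $A$, using Lemma~\ref{lem:-Eresjoin} together with the definitions of the sheaf operations on $M_\sigma\times M_\tau$, $M_\tau^{M_\sigma}$, and $\pow(M_\sigma)$. With the support lemma, $\forall$-elimination $\forall x^\sigma A(x^\sigma)\limp A(t)$ reduces to the inclusion
\[
\left(\bigcap_{a\in M_\sigma}\llb Ea\limp A(a)\rrb\right)^\circ\cap E(\llb t\rrb)\subseteq\llb A(\llb t\rrb)\rrb,
\]
which follows by instantiating the intersection at $a=\llb t\rrb$. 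The $\exists$-introduction axiom is immediate from the definition of $\llb\exists x^\sigma A\rrb$ as a union, and the quantifier rules (generalisation over variables not free in the premise) follow from the inductive definition together with commutativity of substitution and restriction. Equality axioms are validated using the clause $\llb r=s\rrb=\bigcup\{U\mid\llb r\rrb\ser U=\llb s\rrb\ser U\}$ together with sheaf axiom~\ref{sheaf5} of Definition~\ref{dfn:-sheaf} to glue along compatible families.

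The higher-order layer covers pairing and projection, application, and the comprehension and extensionality principles for $P\sigma$. The pairing and projection axioms are forced by the very definition of the product sheaf $M\times N$; the conversion $\Ap ts\ser U=\varphi(\llb s\rrb\ser U)$ that underlies $\beta$- and $\eta$-style axioms is forced by the definition of the function sheaf $N^M$. The core work is comprehension: each formula $A(x^\sigma)$ determines a subsheaf of $M_\sigma$, namely $\{a\in M_\sigma\mid \llb Ea\limp A(a)\rrb=E(a)\}$ with appropriate restriction, and by the theorem already stated in the excerpt this subsheaf corresponds to a global section of $\pow(M_\sigma)$; unwinding the correspondence gives $\llb s\in\{x\mid A(x)\}\rrb=\llb A(s)\rrb$, from which both comprehension and extensionality follow.

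The main obstacle I expect is the bookkeeping around the interior operations hidden inside $\limp$, $\lnot$, and $\forall$. These are precisely what make the semantics intuitionistic, but they destroy many naive equalities: unions of open sets are open, while intersections generally are not, so one must repeatedly pass from pointwise inclusions to their interiors. I would isolate one auxiliary lemma asserting that $U\cap\bigl(\bigcap_i V_i\bigr)^\circ\subseteq\bigl(\bigcap_i(U\cap V_i)\bigr)^\circ$ when $U$ is open, and apply it whenever a restriction or a universal instantiation has to be pushed inside an interior. With the support lemma and this interior lemma in hand, every remaining verification becomes a routine calculation; the detailed case analysis is carried out in \cite[Chapter 14]{VDT} and in \cite{basu}, which I would cite rather than reproduce.
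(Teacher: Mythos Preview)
Your proposal is considerably more detailed than the paper's own proof, which consists solely of a citation to \cite[Theorem~7.3]{FOURSCOTT} and \cite[Theorem~5.15]{VDT}.  Since you close by deferring to the same references, the two converge, and your three-layer outline is a reasonable summary of what those sources do.

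There is, however, a genuine error in your ``support lemma.''  The inclusion $\llb A(a)\rrb\subseteq E(\llb a\rrb)$ holds for the atomic formulas of this semantics but does \emph{not} propagate through $\lnot$ or $\limp$: already $\llb\lnot Ea\rrb=(T\setminus E(\llb a\rrb))^\circ$, which is disjoint from $E(\llb a\rrb)$ rather than contained in it, and for a tautology $A$ not mentioning $x$ one has $\llb A(a)\rrb=T$.  The companion identity $\llb A(a\ser U)\rrb=\llb A(a)\rrb\cap U$ fails for the same reason.  What the standard arguments actually use is the \emph{localized} form
\[
\llb A(a)\rrb\cap U=\llb A(a\ser U)\rrb\cap U\quad\text{for }U\subseteq E(\llb a\rrb),
\]
which follows from $\llb a=a\ser U\rrb=E(\llb a\rrb)\cap U$ together with substitution of equals (Theorem~\ref{thm:-subeq}).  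This is enough because in every place the quantifier clauses use $A(a)$, they use it only inside $\llb Ea\limp A(a)\rrb$, so the behaviour of $\llb A(a)\rrb$ outside $E(\llb a\rrb)$ is irrelevant.  With the support lemma corrected in this way, the rest of your outline (Heyting-algebra soundness for the propositional layer, quantifier instantiation via the intersection/union clauses, and comprehension via the subsheaf/global-section correspondence) goes through as you describe.
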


\begin{proof}
    See \cite[Theorem 7.3]{FOURSCOTT} and \cite[Theorem 5.15]{VDT}.
\end{proof}

For instance, substitution of equals is intuitionistically valid,
hence provable in $\IHOL$, so we have:
\begin{thm}\label{thm:-subeq}
  Let $x$ be a variable of sort $\sigma$, let $r$ and $s$ be closed
  $\lang(\mu)$-terms of sort $\sigma$, and let $A(x)$ be an
  $\lang(\mu)$-formula with no free variables other than $x$.  Then
  $\Sh(T,\mu)\models r=s\limp(A(r)\liff A(s))$, hence $\llb
  r=s\rrb\cap\llb A(r)\rrb\subseteq\llb A(s)\rrb$.
\end{thm}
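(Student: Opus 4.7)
The plan is to deduce this theorem as a direct application of the soundness result in Theorem \ref{thm:-IHOL}, followed by an unfolding of the clauses in Definition \ref{dfn:llbrrb} that govern $\limp$ and $\liff$.

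First I would recall that the \emph{substitution of equals} schema $x = y \limp (A(x) \liff A(y))$ is a standard theorem of $\IHOL$; it is either taken as an axiom or derived from Leibniz's law in the formulations of \cite{SCOTT} and \cite[Chapter 14]{VDT}. The universal closure $\forall x\,\forall y\,(x = y \limp (A(x) \liff A(y)))$ is therefore an $\IHOL$-theorem, so by Theorem \ref{thm:-IHOL} it is valid in every sheaf model $\Sh(T,\mu)$. Instantiating the universal quantifiers by the closed $\lang(\mu)$-terms $r$ and $s$ (using the already-proved soundness of universal instantiation inside $\IHOL$) yields $\Sh(T,\mu) \models r = s \limp (A(r) \liff A(s))$, which is the first conclusion.

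Next I would unpack this into the stated inclusion. By Definition \ref{dfn:Sh(T,mu)}, $\llb r = s \limp (A(r) \liff A(s)) \rrb = T$. Applying the clause for $\limp$, this means $((T \setminus \llb r=s\rrb) \cup \llb A(r) \liff A(s) \rrb)^\circ = T$, and since the interior of a set that equals $T$ must be $T$ itself, the set is already $T$, so $\llb r=s\rrb \subseteq \llb A(r) \liff A(s) \rrb$. By the clause for $\liff$, $\llb A(r) \liff A(s) \rrb \subseteq \llb A(r) \limp A(s) \rrb = ((T \setminus \llb A(r)\rrb) \cup \llb A(s) \rrb)^\circ \subseteq (T \setminus \llb A(r)\rrb) \cup \llb A(s) \rrb$. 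Intersecting with $\llb A(r)\rrb$ then gives $\llb r=s\rrb \cap \llb A(r)\rrb \subseteq \llb A(s)\rrb$, as required.

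There is no real obstacle here: the only point that deserves care is the passage from $\llb r=s \limp (A(r)\liff A(s))\rrb = T$ to the inclusion $\llb r=s\rrb \subseteq \llb A(r)\liff A(s)\rrb$, where one must remember that an open set whose interior is $T$ is $T$ itself. An alternative, more self-contained route would bypass Theorem \ref{thm:-IHOL} and prove the inclusion $\llb r=s\rrb \cap \llb A(r)\rrb \subseteq \llb A(s)\rrb$ directly by induction on the complexity of $A$, using Definition \ref{dfn:llbrrb} at each clause; but since Theorem \ref{thm:-IHOL} is already in hand, the appeal to soundness is the economical option and is presumably what the authors intend.
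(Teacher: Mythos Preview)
Your proposal is correct and matches the paper's approach: the text immediately preceding the theorem already announces the strategy (``substitution of equals is intuitionistically valid, hence provable in $\IHOL$, so we have:''), and the proof in the paper is simply a pointer to \cite[pages 42--48]{basu} for details. You have supplied more of those details than the paper does, particularly the unpacking of $\llb\cdot\rrb$ to extract the inclusion, and your remark about the alternative direct induction on the complexity of $A$ is almost certainly what the seven pages in \cite{basu} contain.
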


\begin{proof}
  For a much more detailed proof, see \cite[pages 42--48]{basu}.
\end{proof}

\begin{rem}
  By \cite{FOURSCOTT,VDT} we know that intuitionistic mathematics is
  formalizable in $\IHOL$.  Thus Theorem \ref{thm:-IHOL} may be viewed
  as saying that, for any topological space $T$, $\Sh(T)$ is a model
  of intuitionistic mathematics.  Such models are known as \emph{sheaf
    models}.
\end{rem}

\section{Poset spaces and choice principles}
\label{sec:Sh(K)}

In this section we discuss sheaf models over a special class of
topological spaces, the so-called \emph{poset spaces}.  We show that
some special cases of the axiom of choice are valid for sheaf models
over poset spaces and over the Baire space.

\subsection{Poset spaces}
\label{subsec:sheaves on posets}

\begin{dfn}
  A \emph{poset}\footnote{I.e., a partially ordered set.} is a
  non-empty set $K$ together with a binary relation ${\le}$ on $K$
  which is reflexive, antisymmetric, and transitive.  A set
  $U\subseteq K$ is said to be \emph{upwardly closed} if for all
  $\alpha\in U$ and $\beta\in K$, $\alpha\le\beta$ implies $\beta\in
  U$.  The upwardly closed subsets of $K$ are the open sets of a
  topology on $K$, the \emph{Alexandrov topology}.  A \emph{poset
    space} is a poset endowed with the Alexandrov topology.  The
  category of sheaves over a poset space $K$ is denoted $\Sh(K)$.
\end{dfn}

\begin{lem}\label{lem:-Alexandrov}
  Let $K$ be a poset space.
  \begin{enumerate}
  \item For any $\alpha\in K$ there is a smallest open set containing
    $\alpha$, namely,
    \[
    U_\alpha=\{\beta\in K\mid\alpha\le\beta\}.
    \]
  \item $K$ is \emph{locally connected}, i.e., for any $\alpha\in K$
    and any neighborhood $U$ of $\alpha$, there is a connected
    neighborhood of $\alpha$ included in $U$.
  \item For all families of subsets of $K$ we have
    $\left(\bigcap_{i\in I}S_i\right)^\circ=\bigcap_{i\in
      I}S_i^\circ$.
  \item If $U$ is an open subset of $K$, and if $X$ is a $T_1$
    space\footnote{A \emph{$T_1$ space} is a topological space in
      which every point is a closed set.  Examples of $T_1$ spaces are
      $\RR$, $\NN$, $\NN^\NN$, etc.}, then every continuous function
    $f:U\to X$ is locally constant.
  \end{enumerate}
\end{lem}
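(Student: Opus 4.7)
All four parts will follow quickly once one exploits the defining feature of the Alexandrov topology: the open sets are exactly the upwardly closed sets. I will prove the parts in the order they are stated, since later parts use the earlier ones.

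For (1), the plan is to verify that $U_\alpha=\{\beta\mid\alpha\le\beta\}$ is open (i.e., upwardly closed) using transitivity of $\le$, observe that $\alpha\in U_\alpha$ by reflexivity, and then show minimality: if $U$ is any open neighborhood of $\alpha$, upward closure of $U$ forces $U_\alpha\subseteq U$. This is immediate.

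For (2), I will show the set $U_\alpha$ of part (1) is itself connected, which, combined with part (1), gives local connectedness since $U_\alpha\subseteq U$ whenever $\alpha\in U$ and $U$ is open. Suppose $U_\alpha=V_1\sqcup V_2$ with $V_1,V_2$ open in $K$ and disjoint. The point $\alpha$ belongs to one of them, say $V_1$; but by the minimality in (1), $U_\alpha\subseteq V_1$, so $V_2=\emptyset$. Hence $U_\alpha$ is a connected neighborhood of $\alpha$ contained in $U$.

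For (3), the inclusion $\left(\bigcap_i S_i\right)^\circ\subseteq\bigcap_i S_i^\circ$ holds in any topological space. For the reverse, I use that \emph{arbitrary} intersections of open sets are open in an Alexandrov topology (upward closedness is preserved under intersection). Therefore $\bigcap_i S_i^\circ$ is an open set contained in each $S_i$, hence in $\bigcap_i S_i$, hence in its interior.

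For (4), let $f:U\to X$ be continuous and fix $\alpha\in U$. Since $U$ is open and $\alpha\in U$, part (1) gives $U_\alpha\subseteq U$. I claim $f$ is constant with value $f(\alpha)$ on $U_\alpha$, which will exhibit local constancy. The key step uses $T_1$: for every $\gamma\in X$ with $\gamma\ne f(\alpha)$, the set $X\setminus\{\gamma\}$ is an open neighborhood of $f(\alpha)$, so by continuity $f^{-1}(X\setminus\{\gamma\})$ is an open neighborhood of $\alpha$ in $U$ (and hence in $K$), which by part (1) must contain $U_\alpha$. Intersecting over all such $\gamma$ gives $f(U_\alpha)\subseteq\{f(\alpha)\}$, as desired.

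The only step requiring genuine care is (4), where one must combine the Alexandrov feature (smallest open neighborhood exists) with the $T_1$ feature (singletons are intersections of open neighborhoods of the complementary points); the other three parts are formal consequences of upward closure.
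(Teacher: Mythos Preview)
Your proof is correct and follows the natural line of argument; the paper itself does not spell out the details but simply declares the proof ``straightforward'' and refers to \cite{basu}, so your write-up is entirely consistent with what the authors have in mind. Each of your four arguments is the standard one: transitivity/reflexivity for (1), minimality of $U_\alpha$ to rule out nontrivial clopen splittings in (2), closure of the Alexandrov opens under arbitrary intersection for (3), and the combination of (1) with the $T_1$ hypothesis for (4).
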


\begin{proof}
  The proof is straightforward.  See \cite[pages 50--51]{basu}.
\end{proof}

\begin{dfn}
  A poset $K$ is said to be \emph{directed} if for all
  $\alpha,\beta\in K$ there exists $\gamma\in K$ such that
  $\alpha\le\gamma$ and $\beta\le\gamma$.
\end{dfn}

\begin{lem}\label{lem:-DirPosetLem}
  Let $U$ be an upward closed subset in a directed poset $K$.  Let $X$
  be any set.  Then, any locally constant function $f:U\to X$ is
  constant.
\end{lem}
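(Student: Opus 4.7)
The plan is to combine two observations: local constancy in the Alexandrov topology is a very strong condition, and directedness lets us ``join'' any two points into a common successor.

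First I would unpack what local constancy means in a poset space. By Lemma \ref{lem:-Alexandrov}(1), for any $\delta \in K$ the smallest open set containing $\delta$ is $U_\delta = \{\beta \in K \mid \delta \le \beta\}$. Since $U$ is upward closed, $U_\delta \subseteq U$, and so $U_\delta$ is also the smallest open neighborhood of $\delta$ \emph{within} the subspace $U$. If $f : U \to X$ is locally constant, then for each $\delta \in U$ there is some open neighborhood $V$ of $\delta$ on which $f$ is constant; but any such $V$ must contain $U_\delta$, and hence $f$ is constant on $U_\delta$. In particular, $f(\delta) = f(\beta)$ for every $\beta \in U$ with $\delta \le \beta$.

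Next I would exploit directedness. Given arbitrary $\alpha, \beta \in U$, choose $\gamma \in K$ with $\alpha \le \gamma$ and $\beta \le \gamma$, which exists because $K$ is directed. Since $U$ is upward closed and $\alpha \in U$, we have $\gamma \in U$. Applying the previous paragraph once to $\alpha \le \gamma$ and once to $\beta \le \gamma$ yields $f(\alpha) = f(\gamma) = f(\beta)$. Since $\alpha$ and $\beta$ were arbitrary, $f$ is constant.

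There is no real obstacle here; the only thing to be careful about is that local constancy is taken relative to the subspace topology on $U$, but because $U$ is upward closed this subspace topology again has $U_\delta$ as the smallest open neighborhood of $\delta$, which is why the argument goes through cleanly.
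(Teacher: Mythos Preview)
Your proof is correct and is exactly the straightforward argument the paper has in mind (the paper itself only says ``The proof is straightforward'' and cites \cite{basu}). The two key points---that local constancy forces $f$ to be constant on each $U_\delta$, and that directedness plus upward closure lets you link any two points through a common upper bound---are precisely what is needed, and you have handled them cleanly.
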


\begin{proof}
  The proof is straightforward.  See \cite[pages 51--52]{basu}.
\end{proof}

\subsection{Choice principles over poset spaces}
\label{subsec:choice-poset}

In this subsection we show that sheaf models over poset spaces satisfy
certain special cases of the axiom of choice.

\begin{dfn}\label{dfn:-AC}
  Let $\sigma$ and $\tau$ be $L$-sorts.  The \emph{axiom of choice}
  for $\sigma\to\tau$, denoted $\AC(\sigma,\tau)$, is the universal
  closure of
  \begin{center}
    $(\forall x\,\exists y\,A(x,y))\limp\exists w\,\forall x\,A(x,wx)$
  \end{center}
  where $x,y,w$ are variables of sort $\sigma,\tau,\sigma\to\tau$
  respectively, and $A(x,y)$ is any $\lang$-formula in which $w$ does
  not occur.
\end{dfn}

\begin{rem}
  A model of intuitionistic higher-order logic cannot satisfy
  $\AC(\sigma,\tau)$ for all sorts $\sigma,\tau$ unless it is also a
  model of classical higher-order logic.  This is because, as shown in
  \cite{DIAC}, the full axiom of choice implies PEM.  However, as we
  shall see, models such as $\Sh(T,\mu)$ may satisfy
  $\AC(\sigma,\tau)$ for some particular choices of $\sigma$ and
  $\tau$.
\end{rem}

\begin{dfn}\label{dfn:-Xsh}
  Let $T$ be a topological space, and let $X$ be a set.  As in Example
  \ref{exa:-locallyconstantpartiallycontinuous}, let $\Colc(T,X)$ be
  the sheaf of locally constant functions from open subsets of $T$
  into $X$.  We define $\Xsh=\Colc(T,X)$.  Note that for each $x\in X$
  there is a global section $\xhat$ of $\Xsh$ which maps $T$ into
  $\{x\}$.  The sheaf $\Xsh$ is called a \emph{simple sheaf}.  See
  \cite{FOURSCOTT} and \cite[page 782]{VDT}.
\end{dfn}

\begin{thm}\label{thm:-ADCK}
  Let $K$ be a poset space.  If $M_\sigma$ is a simple sheaf over $K$,
  then $\Sh(K,\mu)$ satisfies $\AC(\sigma,\tau)$.
\end{thm}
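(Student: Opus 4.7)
The plan is to prove $\AC(\sigma,\tau)$ by producing, for each $\alpha \in K$, a local witness $w_\alpha$ of extent $U_\alpha$; the family $\{w_\alpha\}_{\alpha\in K}$ will then jointly cover $K$. Specifically, assume $\llb\forall x^\sigma\exists y^\tau A(x,y)\rrb = K$. I will construct $w_\alpha = (\varphi_\alpha, U_\alpha) \in M_{\sigma\to\tau}$ with $\alpha \in \llb\forall x\,A(x, w_\alpha x)\rrb$ for every $\alpha$, so that $\llb\exists w\,\forall x\,A(x, wx)\rrb \supseteq \bigcup_\alpha U_\alpha = K$.

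The first step is to extract witnesses at constants. For every $c \in X$ the global section $\chat$ has $E(\chat) = K$, so the hypothesis specializes to $\llb\exists y\,A(\chat, y)\rrb = K$, and at every $\alpha$ there is some $b \in M_\tau$ with $\alpha \in E(b)\cap\llb A(\chat, b)\rrb$. Since this intersection is open and $U_\alpha$ is the smallest open containing $\alpha$ (Lemma \ref{lem:-Alexandrov}(1)), it in fact contains $U_\alpha$. Using choice in the metatheory, I pick such a $b$ for each pair $(c,\alpha)$ and set $b_{c,\alpha} := b\ser U_\alpha$; Theorem \ref{thm:-subeq} then gives $U_\alpha \subseteq \llb A(\chat, b_{c,\alpha})\rrb$.

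The main obstacle is assembling these pieces into a sheaf morphism $\varphi_\alpha : M_\sigma\ser U_\alpha \to M_\tau\ser U_\alpha$. For $a \in M_\sigma\ser U_\alpha$ and $\beta \in E(a) \subseteq U_\alpha$, simplicity of $M_\sigma$ (local constancy of $a$) together with the minimality of $U_\beta$ forces $a\ser U_\beta = \widehat{c_\beta}\ser U_\beta$ where $c_\beta := a(\beta) \in X$; I would then define $\varphi_\alpha(a)$ by gluing the family $\{b_{c_\beta,\alpha}\ser U_\beta : \beta \in E(a)\}$. The crux is the compatibility check: whenever $U_\beta\cap U_\gamma \neq \emptyset$, any $\delta$ in the intersection satisfies $\beta,\gamma \le \delta$ and lies in $E(a)$, so local constancy of $a$ on $U_\beta$ and on $U_\gamma$ forces $c_\beta = a(\delta) = c_\gamma$, making the two representatives literally the same restriction of the same $b_{c_\beta,\alpha} = b_{c_\gamma,\alpha}$. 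Routine sheaf-theoretic bookkeeping then shows that $\varphi_\alpha$ is a well-defined sheaf morphism and, applying Theorem \ref{thm:-subeq} on each $U_\beta$, that $\llb A(a, \varphi_\alpha(a))\rrb \supseteq E(a)$. Finally, setting $w_\alpha := (\varphi_\alpha, U_\alpha)$ and invoking Lemma \ref{lem:-Alexandrov}(3) to drop the outer interior in $\llb\forall x\,\cdots\rrb$, the bound $U_\alpha \subseteq \llb\forall x\,A(x, w_\alpha x)\rrb$ follows, completing the cover and hence the verification of $\AC(\sigma,\tau)$.
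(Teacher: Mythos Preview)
Your proof is correct and follows the same strategy as the paper: fix $\alpha$, use external choice to select a witness $b_c$ for each constant $c\in X$, and glue these into a sheaf morphism on $U_\alpha$. The only differences are cosmetic --- the paper indexes the gluing by values via $\varphi(a)=\sup_{x\in X}\big(b_x\ser a^{-1}(x)\big)$ (so compatibility is immediate from disjointness of the preimages) rather than by basic opens $U_\beta$, and it works with an arbitrary $U=\llb\forall x\,\exists y\,A(x,y)\rrb$ rather than assuming $U=K$, but your argument adapts verbatim by taking $\alpha\in U$.
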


\begin{proof}
  We may safely assume that $A(x,y)$ has no free variables other than
  $x$ and $y$.  Letting $U=\llb\forall x\,\exists y\,A(x,y)\rrb$, it will
  suffice to show that $U\subseteq\llb\exists w\,\forall
  x\,A(x,wx)\rrb$.

  Let $X$ be a set such that $M_\sigma=\Xsh$.  For each $x\in X$ we
  have $\xhat\in\Xsh$ and $E(\xhat)=K$, hence
  \[
  \begin{array}{rl}
    U&=\displaystyle\left(\bigcap_{a\in
        \Xsh}\left(E(a)\limp\llb\exists
        y\,A(a,y)\rrb\right)\right)^\circ\\
    &\subseteq\displaystyle\bigcap_{x\in
      X}\left(E(\xhat)\limp\llb\exists
      y\,A(\xhat,y)\rrb\right)\\
    &=\displaystyle\bigcap_{x\in X}\llb\exists
    y\,A(\xhat,y)\rrb\qquad\hbox{(because }E(\xhat)=K)\\
    &=\displaystyle\bigcap_{x\in X}\bigcup_{b\in
    M_\tau}(E(b)\cap\llb A(\xhat,b)\rrb).
  \end{array}
  \]

  Fix $\alpha\in U$.  Using the axiom of choice externally, we choose
  for each $x\in X$ a $b_x\in M_\tau$ such that $\alpha\in
  E(b_x)\cap\llb A(\xhat,b_x)\rrb$.  Since $E(b_x)\cap\llb
  A(\xhat,b_x)\rrb$ is open, it follows by Lemma \ref{lem:-Alexandrov}
  that $U_\alpha\subseteq E(b_x)\cap\llb A(\xhat,b_x)\rrb$.  We shall
  now define a sheaf morphism from $\Xsh\ser U_\alpha$ into $M_\tau$.
  Let $a\in \Xsh\ser U_\alpha$ be given.  For all $x,y\in X$ such that
  $x\ne y$ we have $a^{-1}(x)\cap a^{-1}(y)=\emptyset$, hence the set
  $\{b_x\ser a^{-1}(x)\mid x\in X\}\subseteq M_\tau$ is compatible,
  hence the least upper bound
  \[
  \varphi(a)=\displaystyle\sup_{x\in X}\big(b_x\ser a^{-1}(x)\big)\in
  M_\tau
  \]
  exists.  We have $E(a)\subseteq U_\alpha\subseteq E(b_x)$, hence
  $E(\varphi(a))=\bigcup_{x\in X}E(b_x\ser
  a^{-1}(x))=\bigcup_{x\in X}a^{-1}(x)=E(a)$ by Lemma
  \ref{lem:-Eresjoin}.  Moreover, for any open set $V\subseteq K$ we
  have $\varphi(a\ser V)=\sup_{x\in X}\big(b_x\ser(a\ser
  V)^{-1}(x)\big)=\sup_{x\in X}\big(b_x\ser\big(a^{-1}(x)\cap
  V\big)\big)=\left(\sup_{x\in X}\big(b_x\ser
    a^{-1}(x)\big)\right)\ser V=\varphi(a)\ser V$.  Thus $\varphi$
  preserves extent and restriction, so we have a sheaf morphism
  \[
  \Xsh\ser U_\alpha\stackrel{\varphi}{\to}M_\tau\ser U_\alpha,
  \]
  i.e., $(\varphi,U_\alpha)\in M_\tau^{\Xsh}$.

  We claim that $U_\alpha\cap E(a)\subseteq\llb
  A(a,(\varphi,U_\alpha)a)\rrb$ for all $a\in\Xsh$.  To see this, fix
  $\beta\in U_\alpha\cap E(a)$.  For some $x\in X$ we have
  $a(\beta)=x$, hence $U_\beta\subseteq a^{-1}(x)\subseteq E(a)$ and
  $\xhat\ser U_\beta=a\ser U_\beta$, hence
  $U_\beta\subseteq\llb\xhat=a\rrb$.  Moreover $b_x\ser
  U_\beta=\varphi(a\ser U_\alpha)\ser U_\beta$ and $U_\beta\subseteq
  E(b_x)$, hence $U_\beta\subseteq\llb b_x=\varphi(a\ser
  U_\alpha)\rrb$, and clearly $U_\beta\subseteq U_\alpha\cap
  E(a)=\llb\varphi(a\ser U_\alpha)=(\varphi,U_\alpha)a\rrb$.
  Therefore, from $U_\beta\subseteq\llb A(\xhat,b_x)\rrb$ it follows
  by Theorem \ref{thm:-subeq} that $U_\beta\subseteq\llb
  A(a,(\varphi,U_\alpha)a)\rrb$, and this proves the claim.

  Our claim easily implies that
  \[
  \begin{array}{rcl}
    U_\alpha&\subseteq&\displaystyle\bigcap_{a\in \Xsh}\big((K\setminus
    E(a))\cup\llb A(a,(\varphi,U_\alpha)a)\rrb\big)^\circ\\[20pt]
    &=&\llb\forall x\,A(x,(\varphi,U_\alpha)x)\rrb.
  \end{array}
  \]
  But then, since $E((\varphi,U_\alpha))=U_\alpha$, we have
  \[
  \begin{array}{rcl}
    U_\alpha&\subseteq&E((\varphi,U_\alpha))\cap\llb\forall
    x\,A(x,(\varphi,U_\alpha)x)\rrb\\[8pt]
    &\subseteq&\displaystyle\bigcup_{(\psi,V)\in
      M_\tau^{\Xsh}}(E((\psi,V)\cap\llb\forall
    x\,A(x,(\psi,V)x)\rrb)\\[20pt]
    &=&\llb\exists w\,\forall x\,A(x,wx)\rrb.
  \end{array}
  \]
  Since $\alpha\in U$ was arbitrary, we conclude that
  $U\subseteq\llb\exists w\,\forall x\,A(x,wx)\rrb$.  This completes
  the proof of Theorem \ref{thm:-ADCK}.
\end{proof}

\begin{rem}\label{rem:-ADCR}
  Theorem \ref{thm:-ADCK} fails for sheaf models over arbitrary
  topological spaces.  In particular, see \cite[pages 77--79]{basu}
  and \cite[page 788]{VDT} for a proof that $\AC(\sigma,\sigma)$ fails
  in $\Sh(\RR,\mu)$ for $M_\sigma=\NNsh$.  See also Remark
  \ref{rem:notAC0} below.
\end{rem}

\begin{rem}\label{rem:subXsh}
  One might think that Theorem \ref{thm:-ADCK} should hold whenever
  $M_\sigma$ is a subsheaf of a simple sheaf over $K$.  However, the
  following example shows otherwise.  Let $K=\{-\infty\}\cup\{-i\mid
  i\in\NN\}$ with the natural linear ordering, $-\infty<-j<-i$ for all
  $i,j\in\NN$ with $i<j$.  Let $M_\sigma=\Co(K,\{0\})\ser\{-i\mid
  i\in\NN\}$.  For all $m,n\in\NN$ let $b_{m,n}\in\Co(K,\NN)$ be the
  constant function with domain $\{-i\mid i<m\}$ and value $n$.  Let
  $M_\tau$ be the subsheaf of $\Co(K,\NN)$ consisting of all $b_{m,n}$
  such that $m<n$.  Note that for each sheaf morphism $(\varphi,U)\in
  M_{\sigma\to\tau}$ we have $U\subseteq\{-i\mid i\in\NN\}$.  Let
  $x,y,w$ be variables of sort $\sigma,\tau,\sigma\to\tau$
  respectively.  Easy calculations show that $\llb\forall x\,\exists
  y\,(y=y)\rrb=K$ and $\llb\exists w\,\forall x\,(wx=wx)\rrb=\{-i\mid
  i\in\NN\}$.  Thus $\AC(\sigma,\tau)$ fails in $\Sh(K)$ for the
  formula $A(x,y)\equiv(y=y)$.
\end{rem}

In the vein of Theorem \ref{thm:-ADCK} and Remark \ref{rem:-ADCR}, we
now call attention to another principle which is valid for sheaf
models over poset spaces but not over arbitrary topological spaces.

\begin{dfn}\label{dfn:GMP}
  Let $\GMP(\sigma)$ be the universal closure of
  \[
  \left(\forall x\,(A(x)\lor\lnot A(x))\land\lnot\lnot\exists
    x\,A(x)\right)\limp\exists x\,A(x)
  \]
  where $x$ is a variable of sort $\sigma$ and $A(x)$ is any
  $\lang$-formula.  As will become clear in Subsection
  \ref{subsec:natural numbers}, $\GMP(\sigma)$ for $M_\sigma=\NNsh$
  amounts to \emph{Markov's principle} as discussed in \cite[page
  203]{VDT} and in Subsection \ref{subsubsec:recursive math} above.
  Thus $\GMP(\sigma)$ may be viewed as a generalized Markov principle.
\end{dfn}

\begin{thm}
  \label{thm:-GMPK}
  Let $K$ be a poset space.  If $M_\sigma$ is a simple sheaf over $K$,
  then $\Sh(K,\mu)$ satisfies $\GMP(\sigma)$.
\end{thm}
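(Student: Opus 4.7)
The plan is to mimic the structure of the proof of Theorem \ref{thm:-ADCK}, exploiting the key feature of the Alexandrov topology: for each point $\alpha \in K$ the set $U_\alpha = \{\beta \mid \alpha \le \beta\}$ is the smallest open neighborhood of $\alpha$, and interiors commute with arbitrary intersections. Write $X$ for a set such that $M_\sigma = \Xsh = \Colc(K,X)$, and for each $x \in X$ let $\xhat$ be the corresponding global section. Let
\[
U = \llb\forall x\,(A(x)\lor\lnot A(x))\rrb\cap\llb\lnot\lnot\exists x\,A(x)\rrb.
\]
It suffices to show $U\subseteq\llb\exists x\,A(x)\rrb$.

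Fix $\alpha\in U$. I would first reduce to a claim about constants: since each $\xhat$ has $E(\xhat)=K$, the decidability hypothesis at $\alpha$ gives $U_\alpha\subseteq\llb A(\xhat)\rrb\cup\llb\lnot A(\xhat)\rrb$ for every $x\in X$. The goal becomes: produce some $x\in X$ with $\alpha\in\llb A(\xhat)\rrb$, for then $\alpha\in E(\xhat)\cap\llb A(\xhat)\rrb\subseteq\llb\exists x\,A(x)\rrb$.

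Suppose for contradiction that $\alpha\notin\llb A(\xhat)\rrb$ for every $x\in X$. Then by the above disjunction $\alpha\in\llb\lnot A(\xhat)\rrb$, i.e.\ $U_\alpha\cap\llb A(\xhat)\rrb=\emptyset$ for every $x$. Next invoke the double-negation hypothesis: $\alpha\in\llb\lnot\lnot\exists x\,A(x)\rrb$ translates to $U_\alpha\cap\llb\lnot\exists x\,A(x)\rrb=\emptyset$, and since $U_\alpha$ itself is the minimal open neighborhood of $\alpha$, this forces $U_\alpha\cap\llb\exists x\,A(x)\rrb\neq\emptyset$ (else $\alpha$ would lie in $\llb\lnot\exists x\,A(x)\rrb$). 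Pick $\beta\in U_\alpha$ and $a\in M_\sigma$ with $\beta\in E(a)\cap\llb A(a)\rrb$.

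The crux of the argument is now to replace the possibly non-constant section $a$ by a constant $\xhat$ near $\beta$. Because $a\in\Colc(K,X)$ and $U_\beta$ is the smallest open neighborhood of $\beta$ (and is contained in the open set $E(a)$), the locally-constant condition forces $a$ to be constant on $U_\beta$ with value $x := a(\beta)$. Hence $a\ser U_\beta=\xhat\ser U_\beta$, giving $U_\beta\subseteq\llb a=\xhat\rrb$. Since $\beta\in\llb A(a)\rrb$ and the latter is open, $U_\beta\subseteq\llb A(a)\rrb$ as well, and by substitution of equals (Theorem \ref{thm:-subeq}) $U_\beta\subseteq\llb A(\xhat)\rrb$. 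In particular $\beta\in U_\alpha\cap\llb A(\xhat)\rrb$, contradicting the assumption $U_\alpha\cap\llb A(\xhat)\rrb=\emptyset$. This contradiction completes the proof.

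The only delicate step is the passage from a general section $a$ to the constant $\xhat$ near $\beta$; everything else is a direct unwinding of the Kripke/Alexandrov semantics via Lemma \ref{lem:-Alexandrov}. The argument is what makes simplicity of $M_\sigma$ essential: as Remark \ref{rem:subXsh} shows in the analogous context of $\AC$, the conclusion can fail for mere subsheaves of simple sheaves, and indeed without the local constancy of elements of $M_\sigma$ we would lose the ability to reduce an arbitrary witness $a$ to a global constant witness $\xhat$.
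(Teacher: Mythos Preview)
Your proof is correct. The paper itself does not give an argument here but only cites \cite[pages 71--74]{basu}; your approach --- reducing to global constants $\xhat$, using the minimality of $U_\alpha$ to convert $\lnot\lnot\exists x\,A(x)$ into the existence of a witness $\beta\in U_\alpha\cap\llb\exists x\,A(x)\rrb$, and then using local constancy plus Theorem~\ref{thm:-subeq} to replace the witnessing section by a constant --- is the natural one and is exactly in the spirit of the proof of Theorem~\ref{thm:-ADCK}.
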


\begin{proof}
  See \cite[pages 71--74]{basu}.
\end{proof}

\begin{rem}
  Neither Markov's principle nor its generalization in Theorem
  \ref{thm:-GMPK} holds for sheaf models over arbitrary topological
  spaces $T$.  In fact, Markov's principle fails over $T=\{0,1\}^\NN=$
  the Cantor space.  See \cite[pages 69--71]{basu}.
\end{rem}

\subsection{Choice principles over the Baire space}
\label{subsec:choice-Baire}

Despite Remark \ref{rem:-ADCR}, Theorem \ref{thm:-ADCK} is valid for
for sheaf models over some topological spaces other than poset spaces.
We now show that the Baire space $\NN^\NN$ is one such topological
space.

\begin{lem}\label{lem:-CountableDisjointSubcover}
  Given a collection $\calU$ of open sets in $\NN^\NN$, we can find a
  collection $\calV$ of open sets in $\NN^\NN$ such that
  \begin{enumerate}
  \item $\bigcup\calV=\bigcup\calU$,
  \item for all $V\in\calV$ there exists $U\in\calU$ such that
    $V\subseteq U$, and
  \item for all $V,V'\in\calV$, if $V\ne V'$ then $V\cap
    V'=\emptyset$.
  \end{enumerate}
\end{lem}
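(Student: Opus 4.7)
The plan is to exploit the fact that the Baire space $\NN^\NN$ has a basis of clopen sets $[s]=\{f\in\NN^\NN\mid s\subset f\}$ indexed by finite sequences $s\in\NN^{<\NN}$, and that these basic open sets form a tree under extension: any two of them are either disjoint or comparable under inclusion. From this, one builds $\calV$ as a minimal refinement indexed by the ``shortest witnesses'' inside $\bigcup\calU$.

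First I would set $W=\bigcup\calU$ and define
\[
S=\{s\in\NN^{<\NN}\mid [s]\subseteq U\text{ for some }U\in\calU,\text{ but }[t]\not\subseteq U'\text{ for any proper initial segment }t\subsetneq s\text{ and any }U'\in\calU\}.
\]
Then let $\calV=\{[s]\mid s\in S\}$. Clearly each $V\in\calV$ is open and, by construction, is contained in some $U\in\calU$, giving condition (2) and the inclusion $\bigcup\calV\subseteq W$.

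Next I would verify $W\subseteq\bigcup\calV$. Given $f\in W$, pick $U\in\calU$ with $f\in U$. Since $U$ is open and the $[s]$ with $s\subset f$ form a neighborhood base of $f$, there is some initial segment $s\subset f$ with $[s]\subseteq U$. Let $s$ be the shortest such initial segment of $f$ (over all choices of $U\in\calU$); this minimal $s$ lies in $S$ by definition, and $f\in[s]\in\calV$. This gives condition (1).

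Finally, for condition (3), suppose $[s],[s']\in\calV$ with $[s]\cap[s']\neq\emptyset$. Any $f$ in the intersection extends both $s$ and $s'$, so $s$ and $s'$ are comparable as sequences; say $s$ is an initial segment of $s'$. If $s\subsetneq s'$, then $s$ is a proper initial segment of $s'$ with $[s]\subseteq U$ for some $U\in\calU$, contradicting the minimality clause in the definition of $S$. Hence $s=s'$ and $[s]=[s']$.

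The only point of any subtlety is the minimality argument: one must be careful that ``shortest initial segment of $f$ with $[s]\subseteq U$ for some $U\in\calU$'' is well-defined, which is immediate since the lengths are natural numbers and at least one such $s$ exists. No choice is needed beyond selecting one $s$ per $f$, and in fact the minimal length is uniquely determined, so the set $S$ itself is canonical. The argument uses the tree structure of basic clopen sets in $\NN^\NN$ in an essential way; it would not go through for a general topological space, since it relies on the existence of a basis whose elements are pairwise either disjoint or nested.
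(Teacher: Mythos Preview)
Your proof is correct and follows essentially the same approach as the paper: the paper defines $\calV=\{V_p\mid p$ minimal such that $\exists U\in\calU$ with $V_p\subseteq U\}$ and then simply asserts that $\calV$ has the desired properties, whereas you have spelled out the verifications of (1)--(3) in detail.
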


\begin{proof}
  For each finite sequence $p$ of natural numbers, let
  $V_p=\{f\in\NN^\NN\mid p$ is an initial segment of $f\}$.  Given
  $\calU$ as in the lemma, let $\calV=\{V_p\mid p$ minimal such that
  $\exists U\,(U\in\calU$ and $V_p\subseteq U)\}$.  Clearly $\calV$
  has the desired properties.
\end{proof}

\begin{thm}\label{thm:-ADCB}
  If $M_\sigma$ is a simple sheaf over $\NN^\NN$, then
  $\Sh(\NN^\NN,\mu)\models\AC(\sigma,\tau)$.
\end{thm}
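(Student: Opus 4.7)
The plan is to adapt the proof of Theorem \ref{thm:-ADCK} to the Baire space, using Lemma \ref{lem:-CountableDisjointSubcover} as a substitute for the smallest-neighborhood property $U_\alpha$ of a poset space. Let $X$ be a set with $M_\sigma = \Xsh$, and let $U = \llb\forall x\,\exists y\,A(x,y)\rrb$. As in the poset proof, since $E(\xhat) = \NN^\NN$ for every $x \in X$, we obtain
\[
U \;\subseteq\; \bigcap_{x \in X}\llb\exists y\,A(\xhat,y)\rrb
\;=\; \bigcap_{x \in X}\bigcup_{b \in M_\tau}\bigl(E(b)\cap\llb A(\xhat,b)\rrb\bigr).
\]
Unlike in the poset case, no single neighborhood of a given point $f \in U$ sits inside all the right-hand sets simultaneously, so I cannot make finitely many choices depending on a point. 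Instead, I will build one global sheaf morphism on $\Xsh\ser U$ by making countably many choices per $x$ and patching with disjoint supports.

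For each $x \in X$, apply Lemma \ref{lem:-CountableDisjointSubcover} to the open cover $\{E(b)\cap\llb A(\xhat,b)\rrb \mid b\in M_\tau\}$ of $\llb\exists y\,A(\xhat,y)\rrb$ to obtain a pairwise-disjoint family of basic clopen sets $\{V_{p} : p \in P_x\}$ whose union equals this open set, and externally choose, for each $p \in P_x$, a section $b_{x,p}\in M_\tau$ with $V_p \subseteq E(b_{x,p})\cap\llb A(\xhat,b_{x,p})\rrb$. For $a \in \Xsh\ser U$, note that because $a$ is locally constant with values in a set, each preimage $a^{-1}(x)$ is open, and the collection
\[
\bigl\{\,b_{x,p}\ser\bigl(a^{-1}(x)\cap V_p\bigr)\,\bigm|\,x\in X,\ p\in P_x\bigr\}
\]
is pairwise compatible: for distinct $x,x'$ the preimages are disjoint, while for $x=x'$ and $p\ne p'$ the basic sets $V_p, V_{p'}$ are disjoint by construction. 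Set $\varphi(a) = \sup_{x,p} b_{x,p}\ser(a^{-1}(x)\cap V_p)$. A direct calculation using Lemma \ref{lem:-Eresjoin} gives $E(\varphi(a))=E(a)$ (using $V_p\subseteq E(b_{x,p})$ and $a^{-1}(x)\subseteq U_x = \bigsqcup_p V_p$), and $\varphi(a\ser W)=\varphi(a)\ser W$ for every open $W$, so $(\varphi,U)\in M_\tau^{\Xsh}$.

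It remains to show $U\cap E(a)\subseteq\llb A(a,(\varphi,U)a)\rrb$ for every $a\in\Xsh$; once this is established, the rest of the argument concludes exactly as in the proof of Theorem \ref{thm:-ADCK}. Fix $f\in U\cap E(a)$ and let $x=a(f)$. Local constancy of $a$ and membership of $f$ in the unique $V_p$ with $p\in P_x$ covering $f$ give a basic clopen neighborhood $V_q$ of $f$ with $V_q\subseteq V_p\cap a^{-1}(x)\cap U$. On $V_q$ we have $a\ser V_q=\xhat\ser V_q$, so $V_q\subseteq\llb a=\xhat\rrb$; and by the disjointness of the $V_{p'}$'s (and of preimages under $a$), the supremum defining $\varphi(a\ser U)\ser V_q$ collapses to the single term $b_{x,p}\ser V_q$, so $V_q\subseteq\llb(\varphi,U)a = b_{x,p}\rrb$. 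Combined with $V_q\subseteq\llb A(\xhat,b_{x,p})\rrb$, Theorem \ref{thm:-subeq} (substitution of equals) yields $V_q\subseteq\llb A(a,(\varphi,U)a)\rrb$, and since $f\in U\cap E(a)$ was arbitrary and these $V_q$ are open, we conclude $U\cap E(a)\subseteq\llb A(a,(\varphi,U)a)\rrb$.

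The main obstacle is precisely the absence of a smallest neighborhood in $\NN^\NN$, which forces us to replace the single choice $b_x$ (depending on $\alpha$) from the poset proof with a countable, $x$-indexed family of choices $b_{x,p}$ supported on pairwise disjoint basic clopen pieces. Lemma \ref{lem:-CountableDisjointSubcover} is the key device that makes these local choices glue coherently into a single global sheaf morphism on $\Xsh\ser U$.
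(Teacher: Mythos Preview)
Your proof is correct and follows essentially the same approach as the paper's: both apply Lemma~\ref{lem:-CountableDisjointSubcover} for each $x\in X$ to refine the cover $\{E(b)\cap\llb A(\xhat,b)\rrb\mid b\in M_\tau\}$ into a pairwise disjoint family, make one choice of $b$ per piece, and glue via $\varphi(a)=\sup_{x,p}\,b_{x,p}\ser(a^{-1}(x)\cap V_p)$, with the remaining verification paralleling Theorem~\ref{thm:-ADCK}. Your write-up in fact supplies more detail than the paper, which simply refers the verification back to the poset case.
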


\begin{proof}
  Let $X$ be a set such that $M_\sigma=\Xsh$.  As in the proof of
  Theorem \ref{thm:-ADCK}, let $U=\llb\forall x\,\exists
  y\,A(x,y)\rrb$ and note that
  \[
  U\subseteq\displaystyle\bigcap_{x\in X}\bigcup_{b\in
    M_\tau}(E(b)\cap\llb A(\xhat,b)\rrb).
  \]
  For each $x\in X$ apply Lemma \ref{lem:-CountableDisjointSubcover}
  to get a pairwise disjoint collection $\calV_x$ of open sets such
  that $U\subseteq\bigcup\calV_x$ and for all $V\in\calV_x$ there
  exists $b\in M_\tau$ such that $V\subseteq E(b)\cap\llb
  A(\xhat,b)\rrb$.  For each $V\in\calV_x$ choose such a $b$ and let
  $b_{x,V}=b\ser V$.  Clearly $\{b_{x,V}\mid V\in\calV_x\}\subseteq
  M_\tau$ is compatible, so for all $a\in\Xsh\ser U$ define
  \[
  \varphi(a)=\sup_{x\in X}\sup_{V\in\calV_x}\big(b_{x,V}\ser
  a^{-1}(x)\big)\in M_\tau.
  \]
  The verification that $\Xsh\ser U\stackrel{\varphi}{\to}M_\tau\ser
  U$ is a sheaf morphism, the proof that $U\cap E(a)\subseteq\llb
  A(a,(\varphi,U)a)\rrb$ for all $a\in\Xsh$, and the final
  verification that $U\subseteq\llb\exists w\,\forall x\,A(x,wx)\rrb$,
  are similar to the corresponding parts of the proof of Theorem
  \ref{thm:-ADCK}.  For further details, see \cite[pages
  80--83]{basu}.
\end{proof}

\begin{rem}
  A different proof of Theorem \ref{thm:-ADCB} for the special case
  $M_\sigma=\NNsh$ is given in \cite[page 289]{FOURHYL} and \cite[page
  787]{VDT}.
\end{rem}

\begin{rem}\label{rem:-ADCB}
  Our proof of Theorem \ref{thm:-ADCB} uses only the property of the
  Baire space which is stated in Lemma
  \ref{lem:-CountableDisjointSubcover}.  Therefore, Theorem
  \ref{thm:-ADCB} holds for sheaf models over all topological spaces
  with this property.
\end{rem}

\section{Sheaf representations of the number systems}
\label{sec:Numbers}

Let $T$ be a topological space.  In this section we discuss the
representation of the number systems $\NN,\QQ,\RR$ and the Baire space
$\NN^\NN$ within the sheaf model $\Sh(T)$.

\subsection{The natural numbers}
\label{subsec:natural numbers}

Recall from Subsection \ref{subsec:choice-poset} that
$\NNsh=\Colc(T,\NN)$ where $\NN$ is the set of natural numbers.  In
this subsection we argue that $\NNsh$ is appropriately viewed as
representing the natural number system within $\Sh(T)$.

\begin{dfn}\label{dfn:-Peano}
  A \emph{system} is an ordered triple $(X,c,f)$ where $X$ is a set,
  $c\in X$, and $f:X\to X$.  A \emph{Peano system} is a system which
  satisfies $\forall x\,(fx\ne c)$ and $\forall x\,\forall
  y\,(f(x)=f(y)\limp x=y)$ and
  \[
  \forall Y\,((Y\subseteq X\land c\in Y\land\forall x\,(x\in Y\limp
  f(x)\in Y))\limp Y=X).
  \]
\end{dfn}

\begin{thm}\label{thm:-Peano}
  The following familiar facts are intuitionistically valid.
  \begin{enumerate}
  \item Given a Peano system $(X,c,f)$ and a system $(X',c',f')$,
    there is a unique $h:X\to X'$ satisfying $h(c)=c'$ and $\forall
    x\,(h(f(x))=f'(h(x)))$.
  \item Any two Peano systems are isomorphic.
  \item In any Peano system $(X,f,c)$ there are uniquely determined
    functions satisfying the primitive recursion equations,
    identifying $c$ with $0$ and $f$ with the successor function
    $S$. In particular, there are uniquely determined operations $+$
    and $\cdot$ on $X$ satisfying
    \[
    \begin{array}{ll}
      x+c=x,&x+f(y)=f(x+y),\\[8pt]
      x\cdot c=c,&x\cdot f(y)=(x\cdot y)+x
    \end{array}
    \]
    for all $x,y\in X$.
  \end{enumerate}
\end{thm}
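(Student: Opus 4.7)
My plan is to treat part~(1) as the substantive content, since parts~(2) and~(3) follow formally.  For part~(1), I would construct the required $h:X\to X'$ by first producing its graph.  Define
\[
R \;=\; \bigcap\{S\subseteq X\times X'\mid (c,c')\in S\,\land\,\forall x\,\forall y\,((x,y)\in S\limp(f(x),f'(y))\in S)\},
\]
which is definable in $\IHOL$ by comprehension.  The claim is that $R$ is the graph of a function, proved by induction on the Peano system $(X,c,f)$.  Existence is easy: the set $\{x\in X\mid\exists y,\,(x,y)\in R\}$ contains $c$ and is closed under $f$, so by the induction principle it equals $X$.

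The uniqueness induction is the delicate step, and I expect it to be the main obstacle.  For the base case, one must show that $(c,y)\in R$ implies $y=c'$; I would observe that $R'=\{(u,v)\in R\mid u=c\limp v=c'\}$ satisfies the defining closure conditions of $R$, the generator step being vacuous because $f(u)\ne c$ by the Peano axioms, so $f(u)=c\limp f'(v)=c'$ holds by ex falso.  By minimality of $R$, $R'=R$.  For the inductive step at $f(x)$, assuming a unique witness $y_x$ at $x$, the analogous set $R''=\{(u,v)\in R\mid u=f(x)\limp v=f'(y_x)\}$ closes under the generators: when $f(u)=f(x)$, injectivity of $f$ gives $u=x$, and the inductive hypothesis forces $v=y_x$.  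The point is that no case split on decidable equality is required, because each implication holds either vacuously or by appeal to the induction hypothesis, which is essential since we are in an intuitionistic setting.  Once $R$ is known to be the graph of a function $h$, uniqueness of $h$ among solutions of the recursion equations follows by a straightforward further induction: if $h'$ satisfies the same equations, the set $\{x\in X\mid h(x)=h'(x)\}$ contains $c$ and is closed under $f$.

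Given part~(1), part~(2) is immediate.  Applying~(1) in both directions yields $h:X\to X'$ and $k:X'\to X$ satisfying the obvious recursion equations.  Then $k\circ h$ and $\mathrm{id}_X$ both satisfy the recursion equations for the target system $(X,c,f)$, so they coincide by the uniqueness clause of~(1), and symmetrically $h\circ k=\mathrm{id}_{X'}$.

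For part~(3), each primitive-recursion definition is an instance of~(1) applied to a tailored target system.  For addition, I would fix $x\in X$ and apply~(1) with target $(X,x,f)$ to obtain a unique $g_x:X\to X$ with $g_x(c)=x$ and $g_x(f(y))=f(g_x(y))$, and set $x+y=g_x(y)$.  For multiplication, with addition in hand, I would fix $x$ and apply~(1) with target $(X,c,\lambda z.\,z+x)$ to obtain a unique $m_x:X\to X$ with $m_x(c)=c$ and $m_x(f(y))=m_x(y)+x$, and set $x\cdot y=m_x(y)$.  Uniqueness of $+$ and $\cdot$ satisfying the stated equations follows pointwise in the first argument from the uniqueness clause of~(1).
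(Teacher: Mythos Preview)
Your argument is correct and is essentially the standard intuitionistic proof of the recursion theorem for natural-number objects: take the minimal closed relation as the graph, verify functionality by induction using only ex falso and injectivity of $f$ (no decidable equality), and derive parts~(2) and~(3) as corollaries.  The paper, however, does not give a proof at all; it simply refers to \cite[Chapter~3]{VDT}.  So there is nothing to compare at the level of mathematical strategy---you have supplied the details that the paper delegates to the literature, and your treatment is in line with what one finds there.

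One small point worth making explicit in part~(3): after obtaining, for each fixed $x$, the unique $g_x$ (respectively $m_x$) from part~(1), assembling these into a single binary operation $+$ (respectively $\cdot$) on $X$ uses the principle that from $\forall x\,\exists!g\,\Phi(x,g)$ one can extract a function $x\mapsto g_x$.  This is unique choice (function comprehension from unique existence), which is indeed available in $\IHOL$ as set up in the paper, but it is a step you are using tacitly.
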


\begin{proof}
  See \cite[Chapter 3]{VDT}.
\end{proof}

\begin{rem}
  We interpret Definition \ref{dfn:-Peano} and Theorem
  \ref{thm:-Peano} in $\Sh(T)$ by letting $X$ be a sheaf over $T$, $c$
  a section of $X$, $f:X\to X$ a sheaf morphism, and $Y$ a section of
  the power sheaf $P(X)$.  By Theorem \ref{thm:-IHOL} we know that
  Theorem \ref{thm:-Peano} is valid in $\Sh(T)$.  Therefore, the
  following theorem implies that $\NNsh$ is the ``correct''
  representation of the natural number system as a sheaf over $T$.
\end{rem}

\begin{thm}\label{thm:-natural numbers}
  Let $T$ be a topological space.  Let $\widehat0\in\NNsh$ be the
  global section given by $\widehat0(t)=0$ for all $t\in T$.  Let
  $\widehat{S}:\NNsh\to\NNsh$ be the sheaf morphism given by
  $(\widehat{S}(a))(t)=a(t)+1$ for all $a\in\NNsh$ and all $t\in
  E(a)$.  Then $\Sh(T)$ satisfies that $(\NNsh,\widehat0,\widehat{S})$
  is a Peano system.
\end{thm}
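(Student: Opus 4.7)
The plan is to verify the three axioms of a Peano system in turn, interpreted in $\Sh(T)$. First I would quickly observe that $\widehat{0}$ is well-defined as a global section (the constant function $T\to\NN$ with value $0$ is locally constant on $T$) and that $\widehat{S}$ is a sheaf morphism: $E(\widehat{S}(a))=E(a)$ and $\widehat{S}(a)\ser U=\widehat{S}(a\ser U)$ follow because pointwise addition of $1$ commutes with restriction.

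For the first axiom, $\forall n\,\lnot(\widehat{S}(n)=\widehat{0})$, I would fix $a\in\NNsh$ and compute $\llb\widehat{S}(a)=\widehat{0}\rrb$ using Definition \ref{dfn:llbrrb}. For any $U\subseteq E(a)$, the restriction $\widehat{S}(a)\ser U$ takes the value $a(t)+1\ge 1$ at each $t\in U$, while $\widehat{0}\ser U$ takes value $0$; hence no nonempty $U$ can witness equality, so $\llb\widehat{S}(a)=\widehat{0}\rrb=\emptyset$ and $\llb\lnot(\widehat{S}(a)=\widehat{0})\rrb=(T\setminus\emptyset)^\circ=T$. Taking the interior of the intersection over $a\in\NNsh$ of $E(a)\limp T=T$ yields $T$. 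For injectivity, the corresponding calculation shows $\llb\widehat{S}(a)=\widehat{S}(b)\rrb$ is the union of open $U\subseteq E(a)\cap E(b)$ on which $a\ser U+1=b\ser U+1$, which coincides with $\llb a=b\rrb$; hence $\llb\widehat{S}(a)=\widehat{S}(b)\limp a=b\rrb=T$ for all $a,b$.

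The main obstacle is the induction axiom, which quantifies over global sections $Y$ of $\pow(\NNsh)$. I would pass through the correspondence between such $Y$ and subsheaves of $\NNsh$, so that $\llb a\in Y\rrb$ is the largest open $V\subseteq E(a)$ with $a\ser V$ in the subsheaf. Assume $\llb\widehat{0}\in Y\rrb=T$ and $\llb\forall n\,(n\in Y\limp\widehat{S}(n)\in Y)\rrb=T$; I must show $E(a)\subseteq\llb a\in Y\rrb$ for every $a\in\NNsh$. The key move is a meta-level (external) induction on $n\in\NN$ showing $\llb\widehat{n}\in Y\rrb=T$: the base case is the hypothesis, and the inductive step uses that $E(\widehat{n})=T$ together with the successor hypothesis to conclude $\llb\widehat{S}(\widehat{n})\in Y\rrb=T$, noting $\widehat{S}(\widehat{n})=\widehat{n+1}$.

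Finally I would reduce an arbitrary section $a$ to standard naturals by local constancy. For each $t\in E(a)$, since $a\in\Colc(T,\NN)$ there is an open $U\ni t$, $U\subseteq E(a)$, on which $a$ is constant with some value $n\in\NN$, so $a\ser U=\widehat{n}\ser U$ and hence $U\subseteq\llb a=\widehat{n}\rrb$. Combined with $\llb\widehat{n}\in Y\rrb=T$ and Theorem \ref{thm:-subeq}, this gives $U\subseteq\llb a\in Y\rrb$. Varying $t$ covers $E(a)$, so $E(a)\subseteq\llb a\in Y\rrb$, and taking the interior of the intersection over $a\in\NNsh$ yields $\llb\forall x\,(x\in Y)\rrb=T$, completing the induction axiom. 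The main difficulty lies in this last axiom, specifically in the interplay between the internal universal quantification over subsheaves and the external induction used to handle the standard constants $\widehat{n}$, bridged by local constancy of $\NNsh$-sections.
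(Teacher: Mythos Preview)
Your proposal is correct in outline and substance. The paper itself gives no proof, deferring entirely to \cite[pages 58--61]{basu}, so there is nothing in the text to compare against; your approach (direct verification of the three axioms, with the induction axiom handled by external induction on $n$ to get $\llb\widehat{n}\in Y\rrb=T$, then local constancy plus Theorem~\ref{thm:-subeq} to pass to arbitrary sections) is the natural one and presumably close to what Basu does.

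One point deserves a sentence of care when you write this up. The induction axiom has the form $\forall Y\,(H(Y)\limp C(Y))$, and the quantifier over $Y$ ranges over \emph{all} sections of $\pow(\NNsh)$, not only global ones; moreover, what must be shown is $E(Y)\cap\llb H(Y)\rrb\subseteq\llb C(Y)\rrb$, not merely that $\llb C(Y)\rrb=T$ under the blanket assumption $\llb H(Y)\rrb=T$. Your argument as written handles only the latter special case. The fix is routine: restrict to the open set $W=E(Y)\cap\llb H(Y)\rrb$ and work in $\Sh(W)$, using that truth values are local (i.e., $\llb A\rrb$ computed over $W$ agrees with $\llb A\rrb\cap W$ computed over $T$). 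After this reduction your external induction and local-constancy argument go through verbatim with $T$ replaced by $W$. Just make the reduction explicit.
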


\begin{proof}
  A detailed proof is in \cite[pages 58--61]{basu}.
\end{proof}

\begin{rem}
  Similarly, the sheaves in $\Sh(T)$ corresponding to $\ZZ$, the ring
  of integers, and $\QQ$, the field of rational numbers, are $\ZZsh$
  and $\QQsh$ respectively.  See also \cite[Chapter III]{FOURSCOTT}
  and \cite[Chapter 15]{VDT}.
\end{rem}

\subsection{The Baire space}
\label{subsec:Baire space}

In this subsection we discuss the representation of the Baire space
$\NN^\NN$ within $\Sh(T)$.  We begin by noting that, since the simple
sheaf $\NNsh=\Colc(T,\NN)$ represents $\NN$, the function sheaf
$\widehat{\NN}^{\sh\widehat{\NN}^\sh}$ represents $\NN^\NN$.

\begin{thm}\label{thm:-Baire space}
  For any topological space $T$, $\Co(T,\NN^\NN)$ and
  $\widehat{\NN}^{\sh\widehat{\NN}^\sh}$ are isomorphic as sheaves
  over $T$.  Hence $\Co(T,\NN^\NN)$ represents $\NN^\NN$ within
  $\Sh(T)$.
\end{thm}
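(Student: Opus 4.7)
The plan is to exhibit an explicit sheaf isomorphism $\Psi\colon\Co(T,\NN^\NN)\to\widehat{\NN}^{\sh\widehat{\NN}^\sh}$ with an explicit inverse $\Phi$, then check both directions are compatible with extent and restriction. First, given a continuous partial function $f\colon U\to\NN^\NN$ with $U\in\Omega$, I would define a candidate sheaf morphism $\varphi_f\colon\NNsh\res U\to\NNsh\res U$ by $\varphi_f(a)(t)=f(t)(a(t))$ for $a\in\NNsh$ with $E(a)\subseteq U$ and $t\in E(a)$. To see $\varphi_f(a)$ is locally constant, fix $t\in E(a)$, choose a neighborhood on which $a$ has constant value $n$, and use continuity of $f$ together with continuity of the $n$th projection $\NN^\NN\to\NN$ to shrink to a neighborhood on which $f(\cdot)(n)$ is constant. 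Preservation of extent ($E(\varphi_f(a))=E(a)$) and restriction ($\varphi_f(a\ser V)=\varphi_f(a)\ser V$) are immediate, so $(\varphi_f,U)\in\widehat{\NN}^{\sh\widehat{\NN}^\sh}$, and I set $\Psi(f)=(\varphi_f,U)$.

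Next I would construct the inverse. Given $(\varphi,U)$, for each $n\in\NN$ the section $\nhat\res U\in\NNsh\res U$ has extent $U$, and because sheaf morphisms preserve extent, $\varphi(\nhat\res U)\colon U\to\NN$ is locally constant, hence continuous. Define $f_\varphi\colon U\to\NN^\NN$ by $f_\varphi(t)(n)=\varphi(\nhat\res U)(t)$. Continuity follows since each subbasic open set $\{g\in\NN^\NN\mid g(n)=m\}$ pulls back to $\{t\in U\mid\varphi(\nhat\res U)(t)=m\}$, which is open. Set $\Phi(\varphi,U)=f_\varphi$.

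The main obstacle, and the substantive step, is verifying $\Psi\circ\Phi=\mathrm{id}$, i.e.\ that every sheaf morphism $\varphi$ on $\NNsh\res U$ is determined by its action on the constant sections $\nhat\res U$. The key point is that any $a\in\NNsh\res U$ is locally constant, so the preimages $V_n=a^{-1}(n)$ are open in $T$, they partition $E(a)$, and $a\ser V_n=\nhat\ser V_n=(\nhat\res U)\ser V_n$. Using that $\varphi$ preserves restriction,
\[
\varphi(a)\ser V_n=\varphi(a\ser V_n)=\varphi((\nhat\res U)\ser V_n)=\varphi(\nhat\res U)\ser V_n,
\]
so for $t\in V_n$ we get $\varphi(a)(t)=\varphi(\nhat\res U)(t)=f_\varphi(t)(n)=f_\varphi(t)(a(t))=\varphi_{f_\varphi}(a)(t)$, giving $\varphi=\varphi_{f_\varphi}$. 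The reverse composition $\Phi\circ\Psi=\mathrm{id}$ is an immediate unwinding: $f_{\varphi_f}(t)(n)=\varphi_f(\nhat\res U)(t)=f(t)(n)$.

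Finally I would check $\Psi$ is a sheaf morphism. Extent is preserved since $E(f)=\dom(f)=U=E((\varphi_f,U))$. For restriction, given $W\in\Omega$, both $\Psi(f\res W)$ and $\Psi(f)\ser W$ reduce, by the definitions, to the morphism $a\mapsto\varphi_f(a)\ser W$ on $\NNsh\res(U\cap W)$, so $\Psi(f\res W)=\Psi(f)\ser W$. Thus $\Psi$ is a bijective sheaf morphism with inverse $\Phi$, i.e.\ a sheaf isomorphism, and the ``Hence'' clause of the theorem follows because $\widehat{\NN}^{\sh\widehat{\NN}^\sh}$ is by construction the representation of $\NN^\NN$ inside $\Sh(T)$.
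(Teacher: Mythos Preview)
Your proof is correct and complete. The paper itself does not give a proof but only cites Basu's thesis \cite{basu}; your argument---constructing the mutually inverse sheaf morphisms via $\varphi_f(a)(t)=f(t)(a(t))$ and $f_\varphi(t)(n)=\varphi(\nhat\ser U)(t)$, and using the open partition $\{a^{-1}(n)\}_{n\in\NN}$ of $E(a)$ to show that any sheaf morphism on $\NNsh\ser U$ is determined by its values on the constant sections $\nhat\ser U$---is exactly the natural explicit construction one expects, and is essentially the argument found in the cited reference. One small notational point: in a few places you write $\NNsh\res U$ where the paper's convention would be $\NNsh\ser U$ for the restriction sheaf, but this does not affect the mathematics.
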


\begin{proof}
  For a detailed proof, see \cite[pages 62--64]{basu}.
\end{proof}

\begin{thm}\label{thm:-B sp for loc conn T}
  If $T$ is locally connected, then $\Co(T,\NN^\NN)=\Colc(T,\NN^\NN)$,
  so the simple sheaf $\widehat{\NN^\NN}^\sh=\Colc(T,\NN^\NN)$
  represents $\NN^\NN$ within $\Sh(T)$.
\end{thm}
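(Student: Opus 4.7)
The plan is to reduce the claim to the standard topological fact that the Baire space $\NN^\NN$ is totally disconnected. Once $\Co(T,\NN^\NN)=\Colc(T,\NN^\NN)$ is established, the equality $\Colc(T,\NN^\NN)=\widehat{\NN^\NN}^\sh$ is just Definition \ref{dfn:-Xsh}, and the second clause of the theorem follows immediately from Theorem \ref{thm:-Baire space}.

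First I would note the trivial inclusion $\Colc(T,\NN^\NN)\subseteq\Co(T,\NN^\NN)$ (locally constant functions on open subsets of $T$ are always continuous). The content is the reverse inclusion. So fix an arbitrary section $a\in\Co(T,\NN^\NN)$, i.e.\ a continuous map $a\colon U\to\NN^\NN$ with $U=\dom(a)$ open in $T$, and fix $t\in U$. By local connectedness of $T$, choose a connected open neighborhood $V$ of $t$ with $V\subseteq U$. Then $a(V)$ is a connected subset of $\NN^\NN$.

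The key lemma I would invoke (or quickly prove in passing) is that $\NN^\NN$ is totally disconnected, so that every connected subset is a singleton. This is standard: the basic open sets $V_p=\{f\in\NN^\NN\mid p\text{ is an initial segment of }f\}$ from the proof of Lemma \ref{lem:-CountableDisjointSubcover} are clopen, and given distinct $f,g\in\NN^\NN$ one can take $p=f\res(n{+}1)$ where $n$ is the first coordinate at which $f$ and $g$ disagree, obtaining a clopen set containing $f$ but not $g$. Hence any two-point subset of $\NN^\NN$ is disconnected. Applied to $a(V)$, this forces $a(V)$ to be a single point, i.e.\ $a$ is constant on $V$. Since $t\in U$ was arbitrary, $a$ is locally constant on $U$, so $a\in\Colc(T,\NN^\NN)$.

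This gives $\Co(T,\NN^\NN)=\Colc(T,\NN^\NN)$ as sets; the extent and restriction operations on both sides are inherited from $\Co(T,\NN^\NN)$, so they agree as sheaves. By Definition \ref{dfn:-Xsh} we then have $\widehat{\NN^\NN}^\sh=\Colc(T,\NN^\NN)=\Co(T,\NN^\NN)$, and Theorem \ref{thm:-Baire space} says the latter represents $\NN^\NN$ in $\Sh(T)$. The only step requiring any thought is the total disconnectedness of $\NN^\NN$, which is routine; the hypothesis that $T$ is locally connected is used exactly once, to produce the connected neighborhood $V$.
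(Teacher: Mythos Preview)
Your proof is correct and reaches the same conclusion as the paper's, but via a slightly different decomposition. The paper does not argue directly that $\NN^\NN$ is totally disconnected; instead it passes to the coordinate projections $a_i:U\to\NN$ defined by $a_i(t)=(a(t))(i)$, observes that each $a_i$ is continuous into the discrete space $\NN$, and uses that a continuous map from a connected space into a discrete space is constant to conclude that each $a_i$, and hence $a$ itself, is constant on any connected open $U$. Your route through total disconnectedness of $\NN^\NN$ is equally valid and perhaps more conceptual; the paper's coordinate-wise argument has the minor advantage that it visibly generalizes (as the paper remarks immediately afterward) to any product space $X^Y$ with $X$ discrete, without needing to re-verify total disconnectedness for each such product.
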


\begin{proof}
  Let $U\subseteq T$ be open.  Given a continuous function
  $a:U\to\NN^\NN$, for each $i\in\NN$ define a continuous function
  $a_i:U\to\NN$ by $a_i(t)=(a(t))(i)$.  If $U$ is connected, then each
  $a_i$ is constant on $U$, hence $a$ is constant on $U$.  Since $T$
  is locally connected, it follows that
  $\Co(T,\NN^\NN)=\Colc(T,\NN^\NN)$.  Therefore, by Definition
  \ref{dfn:-Xsh} and Theorem \ref{thm:-Baire space},
  $\Colc(T,\NN^\NN)=\widehat{\NN^\NN}^\sh$ represents $\NN^\NN$ in
  $\Sh(T)$.
\end{proof}

\begin{cor}
  For any poset space $K$, the simple sheaf
  $\widehat{\NN^\NN}^\sh=\Colc(K,\NN^\NN)$ represents $\NN^\NN$ within
  $\Sh(K)$.
\end{cor}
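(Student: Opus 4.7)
The plan is to deduce the corollary directly from the preceding theorem by verifying its hypothesis for poset spaces. Specifically, Theorem \ref{thm:-B sp for loc conn T} asserts that whenever $T$ is locally connected, the simple sheaf $\Colc(T,\NN^\NN) = \widehat{\NN^\NN}^\sh$ represents $\NN^\NN$ within $\Sh(T)$. So it suffices to check that any poset space $K$, equipped with the Alexandrov topology, is locally connected.

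This local connectedness is exactly the content of Lemma \ref{lem:-Alexandrov}(2). Thus the first and essentially only step is to invoke that lemma to confirm $K$ satisfies the hypothesis of Theorem \ref{thm:-B sp for loc conn T}. Once local connectedness is in hand, the theorem applies with $T = K$, giving $\Co(K,\NN^\NN) = \Colc(K,\NN^\NN)$ and therefore that $\widehat{\NN^\NN}^\sh = \Colc(K,\NN^\NN)$ represents $\NN^\NN$ in $\Sh(K)$.

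There is no real obstacle here; the corollary is a straightforward specialization. The only substantive content already resides in Lemma \ref{lem:-Alexandrov}(2) (where one observes that for $\alpha \in K$ the smallest open neighborhood $U_\alpha$ is itself connected, since any continuous $\{0,1\}$-valued function on $U_\alpha$ must be constant by locally-constant-plus-directedness of the principal up-set $U_\alpha$) and in Theorem \ref{thm:-B sp for loc conn T} (where connectedness of a neighborhood forces each coordinate function of a continuous $\NN^\NN$-valued map to be constant on it). Both facts have already been established, so the proof reduces to a one-line citation.
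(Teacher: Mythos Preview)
Your proof is correct and is exactly the paper's: cite Lemma~\ref{lem:-Alexandrov} for local connectedness of $K$, then apply Theorem~\ref{thm:-B sp for loc conn T}. One minor quibble with your parenthetical aside: $U_\alpha$ need not be directed in general; its connectedness follows rather from $\alpha$ being its least element (so any locally constant function agrees everywhere with its value at $\alpha$), but since you explicitly defer to the already-proved lemma this does not affect your argument.
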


\begin{proof}
  By Lemma \ref{lem:-Alexandrov} $K$ is locally connected, so Theorem
  \ref{thm:-B sp for loc conn T} applies to $K$.
\end{proof}

\begin{rem}
  Theorems \ref{thm:-Baire space} and \ref{thm:-B sp for loc conn T}
  for $\NN^\NN$ hold more generally, for product spaces $X^Y$ where
  $X$ has the discrete topology.  In other words, over any topological
  space $T$ the sheaves $\widehat{X}^{\sh\widehat{Y}^\sh}$ and
  $\Co(T,X^Y)$ are isomorphic, and if $T$ is locally connected then
  $\Co(T,X^Y)=\Colc(T,X^Y)=\widehat{X^Y}^\sh$.
\end{rem}

\subsection{The real numbers}
\label{subsec:real numbers}

In classical mathematics, the \emph{Cauchy reals} (real numbers
constructed as equivalence classes of Cauchy sequences of rational
numbers) and the \emph{Dedekind reals} (real numbers constructed as
Dedekind cuts of rational numbers) are equivalent.
Intuitionistically, they are not necessarily equivalent.  In this
subsection we discuss various sheaf models where they are and are not
equivalent.

\begin{dfn}
  Classically, we use $\RR$ to denote the real number system.
  Intuitionistically, we use $\RR_C$ and $\RR_D$ to denote the Cauchy
  reals and the Dedekind reals respectively.  In particular, given a
  topological space $T$, we use $\RR_C$ and $\RR_D$ to denote the
  sheaves in $\Sh(T)$ corresponding to the Cauchy reals and the
  Dedekind reals respectively.  Recall from Subsection
  \ref{subsec:sheaves} that $\Co(T,X)$ (respectively $\Colc(T,X)$,
  $\Coc(T,X)$) are the sheaves of continuous (respectively locally
  constant, constant) functions from open subsets of $T$ into $X$.  If
  $M$ is any one these sheaves over $T$, there is a natural
  isomorphism of $\QQsh=\Co(T,\QQ)=\Colc(T,\QQ)$ onto a subsheaf of
  $M$, corresponding to the natural embedding of $\QQ$ into $\RR$.  If
  $M_1$ and $M_2$ are any two of these sheaves, we say that $M_1$ and
  $M_2$ are \emph{$\QQ$-isomorphic}, denoted $M_1\congQ M_2$, if there
  is an isomorphism of $M_1$ onto $M_2$ which commutes with the
  natural embeddings of $\QQsh$ into $M_1$ and $M_2$.
\end{dfn}

\begin{thm}\label{thm:-DedRCauR}
  Let $T$ be a topological space.  Within $\Sh(T)$ we have
  $\RR_D\congQ\Co(T,\RR)$.  Moreover, if $T$ is locally connected then
  $\RR_C\congQ\Colc(T,\RR)$.
\end{thm}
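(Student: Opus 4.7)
The plan is to construct explicit $\QQ$-isomorphisms and verify they respect the sheaf structure. For the first statement, I would work from the standard sheaf-theoretic definition of $\RR_D$: a section with extent $U\subseteq T$ consists of a pair $(L,R)$ of subsheaves of $\QQsh\ser U$ (equivalently, via Definition \ref{dfn:-powersheaf1}, a pair of sections of $\pow(\QQsh)$ with extent $U$) satisfying the Dedekind axioms (inhabited, open, rounded, disjoint, located). Given $a\in\Co(T,\RR)$ with $E(a)=U$, I would send $a$ to the pair $(L_a,R_a)$ whose fibers at each $t\in U$ are $\{q\in\QQ\mid q<a(t)\}$ and $\{q\in\QQ\mid q>a(t)\}$; using Example \ref{exa:-pomega} these amount to the open subsets
\[
\{(t,q)\in U\times\QQ\mid q<a(t)\} \quad\hbox{and}\quad \{(t,q)\in U\times\QQ\mid q>a(t)\}
\]
which are open by continuity of $a$. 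All Dedekind axioms then reduce to elementary facts about $\RR$; in particular, locatedness holds because for any rationals $q<q'$ we have $\{t\mid q<a(t)\}\cup\{t\mid a(t)<q'\}=U$. Conversely, given a Dedekind cut $(L,R)$ with extent $U$, the fibers $(L_t,R_t)$ at each $t\in U$ form a classical Dedekind cut and hence determine a real $a(t)$; openness of $L$ and $R$ as subsets of $U\times\QQ$ translates directly into continuity of $a:U\to\RR$.

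For the second statement, assume $T$ is locally connected. The Cauchy reals $\RR_C$ in $\Sh(T)$ are (equivalence classes of) Cauchy sequences of rationals, represented by sheaf morphisms $\NNsh\to\QQsh$ with some extent $U$ satisfying a Cauchy-modulus condition. By Theorem \ref{thm:-B sp for loc conn T} and the remark following it, such a sheaf morphism on $U$ corresponds to an element of $\Colc(U,\QQ^\NN)$, i.e., a locally constant function. Covering $U$ by connected open subsets, on each of these the function is literally constant, i.e., an ordinary Cauchy sequence of rationals, which converges in $\RR$. The local limits assemble into a locally constant continuous map $U\to\RR$, i.e., an element of $\Colc(T,\RR)$. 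The inverse sends a locally constant $r:U\to\RR$ to a canonical Cauchy sequence of rationals (e.g., dyadic approximations defined separately on each connected piece); the equivalence relation on Cauchy sequences descends to pointwise equality of limit functions, yielding the desired $\QQ$-isomorphism.

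The main obstacles lie in the bookkeeping: confirming that the assignments above respect extent, restriction, and the natural embedding of $\QQsh$ as rational constants on both sides; and checking that the \emph{internal} statements of the Dedekind and Cauchy axioms, interpreted via $\llb\cdot\rrb$, really coincide with the concrete open-set and local-constancy conditions I have used. The one genuinely analytic input is locatedness in the Dedekind case, which rests on $\RR$ being classically the order-completion of $\QQ$. Once the bijection on sections is in hand, compatibility with the sheaf operations $E$ and $\ser$ is routine.
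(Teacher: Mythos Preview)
Your outline is correct and is precisely the standard argument; the paper itself gives no proof here but simply cites \cite[pages 288--289]{FOURHYL}, \cite[pages 384--385]{FOURSCOTT}, and \cite[pages 784--789]{VDT}, where exactly the construction you describe is carried out. Your invocation of Theorem~\ref{thm:-B sp for loc conn T} and its subsequent remark for the Cauchy-real half is the right internal shortcut, and the bookkeeping you flag (compatibility with $E$, $\ser$, and the embedding of $\QQsh$) is routine once the bijection on sections is set up as you have it.
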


\begin{proof}
  See \cite[pages 288--289]{FOURHYL}, \cite[pages
  384--385]{FOURSCOTT}, and \cite[pages 784--789]{VDT}.
\end{proof}

\begin{cor}\label{cor:-DedRneqCauR}
  In $\Sh(\RR)$ we have $\RR_C\not\congQ\RR_D$.
\end{cor}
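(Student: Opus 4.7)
Since $\RR$ is locally connected, Theorem~\ref{thm:-DedRCauR} gives $\RR_D\congQ\Co(\RR,\RR)$ and $\RR_C\congQ\Colc(\RR,\RR)$ inside $\Sh(\RR)$, so it will suffice to show that the sheaves $\Colc(\RR,\RR)$ and $\Co(\RR,\RR)$ are not $\QQ$-isomorphic. The plan is a short proof by contradiction: I would assume a $\QQ$-isomorphism $\phi\colon\Colc(\RR,\RR)\to\Co(\RR,\RR)$ and transport a carefully chosen global section of $\Co(\RR,\RR)$ back across $\phi$ to produce an impossible global section of $\Colc(\RR,\RR)$.

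The witness I have in mind is the continuous function $g(t)=\max(t,0)$, regarded as a global section of $\Co(\RR,\RR)$. Its key feature is that the equality locus with zero, computed from Definition~\ref{dfn:llbrrb} as the largest open subset of $\RR$ on which $g$ agrees with the constant function $\widehat{0}$, equals $(-\infty,0)$, a proper nonempty open subset of $\RR$. Letting $h=\phi^{-1}(g)$, I would first use the fact that $\phi$ preserves extent to conclude $E(h)=\RR$; local constancy of sections of $\Colc(\RR,\RR)$ together with connectedness of $\RR$ then forces $h$ to be a single constant $c\in\RR$. Next, since the $\QQ$-isomorphism condition gives $\phi(\widehat{0})=\widehat{0}$ and any sheaf isomorphism preserves the equality truth value, I would obtain $\llb h=\widehat{0}\rrb=\llb g=\widehat{0}\rrb=(-\infty,0)$. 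Finally, direct inspection of Definition~\ref{dfn:llbrrb} for the global constant $h\equiv c$ yields $\llb h=\widehat{0}\rrb=\RR$ if $c=0$ and $\llb h=\widehat{0}\rrb=\emptyset$ otherwise, neither of which is $(-\infty,0)$; this is the contradiction.

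The only point requiring any real care is the appeal to equality-truth-value preservation, since the definition of $\QQ$-isomorphism in Subsection~\ref{subsec:real numbers} asks only that $\phi$ be a sheaf isomorphism commuting with the $\QQsh$-embedding. Fortunately this reduces immediately to the fact that any sheaf morphism commutes with $E$ and $\ser$, so the open set $\llb a=b\rrb$ transports across $\phi$ automatically. In particular the argument does not rely on $\phi$ preserving any additional ordered-field structure, which insulates the proof from any ambiguity in how much structure the notation $\congQ$ is meant to carry.
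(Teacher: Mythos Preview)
Your argument is correct and follows the same reduction as the paper: use local connectedness of $\RR$ together with Theorem~\ref{thm:-DedRCauR} to identify $\RR_C\congQ\Colc(\RR,\RR)$ and $\RR_D\congQ\Co(\RR,\RR)$, then show these two sheaves are not $\QQ$-isomorphic. The paper's version of the second step is much terser---it simply notes that the identity function witnesses $\Colc(\RR,\RR)\subsetneqq\Co(\RR,\RR)$ and asserts that non-isomorphism ``follows easily''---whereas your equality-locus computation with $g(t)=\max(t,0)$ is a clean explicit way to supply that detail. One small observation: since any sheaf isomorphism preserves $\llb\,\cdot=\cdot\,\rrb$ for arbitrary pairs of sections, you could pull back both $g$ and $\widehat{0}$ through $\phi^{-1}$ and run the same contradiction without ever invoking the $\QQ$-compatibility clause, so your worry in the last paragraph is in fact moot.
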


\begin{proof}
  $\RR$ is locally connected, so $\RR_C\congQ\Colc(\RR,\RR)$ and
  $\RR_D\congQ\Co(\RR,\RR)$.  On the other hand, there are continuous
  real-valued functions on $\RR$ which are not locally constant, e.g.,
  the identity function on $\RR$.  Thus
  $\Colc(\RR,\RR)\subsetneqq\Co(\RR,\RR)$, and from this it follows
  easily that $\Colc(\RR,\RR)\not\congQ\Co(\RR,\RR)$.
\end{proof}

\begin{cor}\label{cor:RCRDK}
  Let $K$ be a poset space.  In $\Sh(K)$ we have
  $\RR_C\congQ\RR_D\congQ\Colc(K,\RR)=\Co(K,\RR)$.  Moreover,
  $\Sh(K,\mu)$ satisfies $\AC(\sigma,\tau)$ for $M_\sigma=\RR_C$.
\end{cor}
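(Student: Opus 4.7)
The proof is a short assembly of results already established, so the main task is to identify which piece to quote at each step. The plan is to reduce everything to two facts about poset spaces recorded in Lemma \ref{lem:-Alexandrov}: that $K$ is locally connected, and that every continuous function from an open subset of $K$ into a $T_1$ space is locally constant.

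First I would observe that, since $\RR$ is a $T_1$ space, part (4) of Lemma \ref{lem:-Alexandrov} yields $\Co(K,\RR)=\Colc(K,\RR)$: every continuous real-valued function on an open $U\subseteq K$ is automatically locally constant, and the reverse inclusion is trivial. Next, I would apply Theorem \ref{thm:-DedRCauR} with $T=K$. This immediately gives $\RR_D\congQ\Co(K,\RR)$; and because $K$ is locally connected (Lemma \ref{lem:-Alexandrov}(2)), the second clause of the theorem gives $\RR_C\congQ\Colc(K,\RR)$. Combining these with the equality $\Colc(K,\RR)=\Co(K,\RR)$ just observed, the chain
\[
\RR_C\congQ\Colc(K,\RR)=\Co(K,\RR)\congQ\RR_D
\]
follows, which is the first assertion of the corollary.

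For the choice clause, I would simply remark that by Definition \ref{dfn:-Xsh} the sheaf $\Colc(K,\RR)$ is exactly the simple sheaf $\widehat{\RR}^\sh$ over $K$. Hence $\RR_C$ is (up to $\QQ$-isomorphism, which is in particular a sheaf isomorphism) a simple sheaf over $K$, so Theorem \ref{thm:-ADCK} applied with $M_\sigma=\RR_C\cong\widehat{\RR}^\sh$ yields $\Sh(K,\mu)\models\AC(\sigma,\tau)$ for every sort $\tau$.

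There is really no obstacle: the entire argument is an application of the preceding lemma and two theorems, and the only thing to be careful about is interpreting the $\QQ$-isomorphism $\RR_C\congQ\widehat{\RR}^\sh$ as a genuine sheaf isomorphism so that Theorem \ref{thm:-ADCK}, which is stated for $M_\sigma=\Xsh$, applies verbatim to $M_\sigma=\RR_C$.
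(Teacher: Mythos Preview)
Your argument is correct and matches the paper's own proof essentially line for line: both invoke Lemma~\ref{lem:-Alexandrov} for local connectedness and for $\Co(K,\RR)=\Colc(K,\RR)$, then apply Theorem~\ref{thm:-DedRCauR} to identify $\RR_C$ and $\RR_D$ with these sheaves, and finally transfer Theorem~\ref{thm:-ADCK} from $\RRsh$ to $\RR_C$ via the isomorphism. Your closing remark about why the $\QQ$-isomorphism suffices to invoke Theorem~\ref{thm:-ADCK} is a nice point that the paper leaves implicit.
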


\begin{proof}
  By Lemma \ref{lem:-Alexandrov} $K$ is locally connected and all
  continuous functions from open subsets of $K$ into $\RR$ are locally
  constant.  Thus Theorem \ref{thm:-DedRCauR} implies that
  $\RR_C\congQ\RR_D\congQ\Co(K,\RR)=\Colc(K,\RR)=\RRsh$.  Theorem
  \ref{thm:-ADCK} tells us that $\Sh(K,\mu)$ satisfies
  $\AC(\sigma,\tau)$ for $M_\sigma=\RRsh$, but since
  $\RR_C\congQ\RRsh$ we get the same conclusion for $M_\sigma=\RR_C$.
\end{proof}

\begin{cor}\label{cor:-DirPoset}
  Let $K$ be a directed poset space.  In $\Sh(K)$ we have
  $\RR_C\congQ\RR_D\congQ\Coc(K,\RR)=\Colc(K,\RR)=\Co(K,\RR)$.
\end{cor}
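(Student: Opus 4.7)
The proposal is to leverage the previous corollary \ref{cor:RCRDK} together with lemma \ref{lem:-DirPosetLem} to get the directed-poset refinement essentially for free.

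First I would invoke Corollary \ref{cor:RCRDK}: since any directed poset space is in particular a poset space, we already have in $\Sh(K)$ the $\QQ$-isomorphisms $\RR_C\congQ\RR_D\congQ\Colc(K,\RR)=\Co(K,\RR)$. So all that remains is to prove the additional equality $\Coc(K,\RR)=\Colc(K,\RR)$; the chain of the corollary then follows by transitivity.

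The inclusion $\Coc(K,\RR)\subseteq\Colc(K,\RR)$ is immediate, since a constant function is trivially locally constant. For the reverse inclusion, let $a\in\Colc(K,\RR)$, so $a:U\to\RR$ is locally constant on some open set $U\subseteq K$. Because $K$ carries the Alexandrov topology, $U$ is upwardly closed in $K$. Now Lemma \ref{lem:-DirPosetLem} applies directly (with $X=\RR$): since $K$ is directed and $U$ is upwardly closed, any locally constant $f:U\to\RR$ is constant. Therefore $a\in\Coc(K,\RR)$, which gives $\Colc(K,\RR)\subseteq\Coc(K,\RR)$ and hence the required equality.

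There is no real obstacle here, since the directed-poset hypothesis is exactly what Lemma \ref{lem:-DirPosetLem} was designed to exploit; the only small check worth mentioning is that $\Coc(K,\RR)$, written on its own, actually is a sheaf (Example \ref{exa:-locallyconstantpartiallycontinuous} warns that $\Coc(T,X)$ is not a sheaf in general). But this now comes for free from the equality just established: $\Coc(K,\RR)=\Colc(K,\RR)$, and $\Colc(K,\RR)$ is a subsheaf of $\Co(K,\RR)$ by Example \ref{exa:-locallyconstantpartiallycontinuous}. Assembling the pieces yields the full string $\RR_C\congQ\RR_D\congQ\Coc(K,\RR)=\Colc(K,\RR)=\Co(K,\RR)$ claimed in the corollary.
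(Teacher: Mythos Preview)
Your proposal is correct and follows exactly the paper's approach: the paper's proof reads simply ``This follows from the previous corollary plus Lemma \ref{lem:-DirPosetLem},'' and you have merely unpacked that line by applying Corollary \ref{cor:RCRDK} and then using Lemma \ref{lem:-DirPosetLem} to collapse $\Colc(K,\RR)$ to $\Coc(K,\RR)$. Your additional remark about $\Coc(K,\RR)$ being a sheaf is a nice sanity check but not strictly needed for the argument.
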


\begin{proof}
  This follows from the previous corollary plus Lemma
  \ref{lem:-DirPosetLem}.
\end{proof}

\begin{dfn}\label{dfn:-AC_0}
  The \emph{axiom of countable choice} is the special case
  $M_\sigma=\NNsh$ of $\AC(\sigma,\tau)$ as formulated in Definition
  \ref{dfn:-AC}.  More formally, for any topological space $T$ we say
  that $\Sh(T)$ satisfies $\AC_0$ if
  $\Sh(T,\mu)\models\AC(\sigma,\tau)$ for $M_\sigma=\NNsh$ and
  arbitrary $M_\tau$.
\end{dfn}

\begin{thm}\label{thm:-AC_0-Cauchy=Dedekind}
  Let $T$ be a topological space.  If $\Sh(T)$ satisfies $\AC_0$, then
  $\Sh(T)$ satisfies $\RR_C\congQ\RR_D$.
\end{thm}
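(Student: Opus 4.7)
The plan is to reduce the statement to an internal $\IHOL$-argument and then invoke Theorem~\ref{thm:-IHOL} together with the $\AC_0$ hypothesis. The canonical $\QQ$-preserving embedding $\iota\colon\RR_C\to\RR_D$, which sends a Cauchy sequence of rationals with explicit modulus of convergence to the Dedekind cut it determines, is intuitionistically provably injective; so the only content of $\RR_C\congQ\RR_D$ is the surjectivity of $\iota$.

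First, working inside $\IHOL$, I would recall that a Dedekind real is a pair $(L,U)$ of inhabited, disjoint, open, downward/upward closed subsets of $\QQ$ satisfying locatedness, $\forall p,q\in\QQ\,(p<q\limp(p\in L\lor q\in U))$. By iteratively trisecting a witnessing interval and applying locatedness to its middle endpoints, one proves the \emph{approximation lemma}
\[
\forall n\in\NN\,\exists (p,q)\in\QQ\times\QQ\,(p\in L\land q\in U\land q-p<2^{-n}).
\]
This uses only primitive recursion on $\NN$, which is available in $\IHOL$ via Theorem~\ref{thm:-Peano}, and does not invoke any choice principle; the case split at each recursion step is legitimate because locatedness is a constructive disjunction from which a witness for each side can be extracted.

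Next I would apply $\AC_0$ with $M_\sigma=\NNsh$ and $M_\tau=\QQsh\times\QQsh$, which is a legitimate instance since $M_\tau$ in Definition~\ref{dfn:-AC_0} is arbitrary. This yields a function $w\colon\NN\to\QQ\times\QQ$ such that, for each $n$, $w(n)=(p_n,q_n)$ satisfies $p_n\in L$, $q_n\in U$, and $q_n-p_n<2^{-n}$. The sequence $n\mapsto p_n$ is then a Cauchy sequence of rationals with an explicit modulus of convergence, representing an element of $\RR_C$ whose image under $\iota$ coincides with the original Dedekind real $(L,U)$. This establishes surjectivity, hence $\RR_C\congQ\RR_D$ holds in $\Sh(T)$ by Theorem~\ref{thm:-IHOL}.

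The main obstacle is ensuring the entire argument is genuinely carried out in $\IHOL$, so that Theorem~\ref{thm:-IHOL} may be invoked. The two delicate points are (i) that the approximation lemma is a theorem of $\IHOL$ from the defining axioms of $\RR_D$, with no hidden appeal to choice beyond the built-in constructive content of $\lor$, and (ii) that the Cauchy real constructed from $w$ provably has the same cut as $(L,U)$, which amounts to checking, in $\IHOL$, that $p_n\in L$ and $q_n\in U$ with $q_n-p_n\to 0$ pins down the cut uniquely. A minor technical point is the modulus convention: depending on whether $\RR_C$ is built from sequences with modulus $n\mapsto 2^{-n}$ or some other fixed rate, one may need to shift $w$ by a small constant to match.
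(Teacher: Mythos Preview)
Your proposal is correct and follows essentially the same approach as the paper: both reduce the result to the intuitionistically provable implication ``$\AC_0$ implies $\RR_C\congQ\RR_D$'' and then invoke the soundness of $\IHOL$ in sheaf models (Theorem~\ref{thm:-IHOL}). The paper's own proof is simply a citation of this well-known fact to \cite{FOURHYL} and \cite{VDT}; you have supplied the standard argument behind that citation, namely the approximation lemma via locatedness followed by an application of $\AC_0$ to produce the Cauchy sequence.
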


\begin{proof}
  It is known intuitionistically that the axiom of countable choice
  implies that the Cauchy reals and the Dedekind reals are isomorphic
  over $\QQ$.  Therefore, by Theorem \ref{thm:-IHOL}, this implication
  holds in $\Sh(T)$.  See also \cite[page 289]{FOURHYL} and
  \cite[pages 274 and 788--789]{VDT}.
\end{proof}

\begin{cor}\label{cor:-CoTR=ColcTR}
  Let $T$ be a locally connected topological space.  If $\Sh(T)$
  satisfies $\AC_0$ then $\Co(T,\RR)=\Colc(T,\RR)$.
\end{cor}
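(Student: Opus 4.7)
The plan is to chain Theorems~\ref{thm:-DedRCauR} and~\ref{thm:-AC_0-Cauchy=Dedekind} and then upgrade the resulting abstract isomorphism to an honest equality of sheaves. Since $T$ is locally connected, Theorem~\ref{thm:-DedRCauR} identifies the Dedekind and Cauchy reals of $\Sh(T)$ as $\RR_D\congQ\Co(T,\RR)$ and $\RR_C\congQ\Colc(T,\RR)$. Invoking Theorem~\ref{thm:-AC_0-Cauchy=Dedekind} under the $\AC_0$ hypothesis gives $\RR_C\congQ\RR_D$, and transitivity of $\congQ$ delivers $\Colc(T,\RR)\congQ\Co(T,\RR)$.

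The crux of the proof is converting this $\QQ$-isomorphism into the literal identity $\Co(T,\RR)=\Colc(T,\RR)$ of sheaves of sections. The idea is to use the \emph{canonical} embedding $\RR_C\to\RR_D$ rather than an abstract isomorphism. Intuitionistically, every Cauchy real determines a Dedekind cut, yielding a natural $\QQ$-morphism $\RR_C\to\RR_D$; conversely, given a Dedekind cut, $\AC_0$ extracts a rapidly converging rational sequence representing it, so that under $\AC_0$ this natural embedding is surjective and hence a $\QQ$-isomorphism. Since this whole argument is formalisable in $\IHOL$, Theorem~\ref{thm:-IHOL} makes it valid in $\Sh(T)$. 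Now the key observation: under the identifications provided by Theorem~\ref{thm:-DedRCauR}, the natural intuitionistic embedding $\RR_C\to\RR_D$ corresponds externally to the set-theoretic inclusion $\Colc(T,\RR)\hookrightarrow\Co(T,\RR)$. Surjectivity in $\Sh(T)$ therefore translates into the external statement $\Co(T,\RR)\subseteq\Colc(T,\RR)$, which together with the trivial reverse inclusion yields the desired equality.

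The main obstacle is precisely the step of identifying the abstract $\QQ$-isomorphism from Theorem~\ref{thm:-AC_0-Cauchy=Dedekind} with the concrete inclusion of sheaves, so that bijectivity as an abstract morphism becomes equality of subsets of a common ambient sheaf. This is handled by the standard intuitionistic uniqueness result: any $\QQ$-morphism between Archimedean ordered fields extending $\QQ$ is determined by its restriction to $\QQ$, hence the $\QQ$-isomorphism between $\RR_C$ and $\RR_D$ must coincide with the canonical embedding; once this is pinned down, the remaining translation between internal and external statements is routine.
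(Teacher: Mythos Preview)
Your proposal is correct and follows the same route as the paper, which simply says the result is immediate from Theorems~\ref{thm:-DedRCauR} and~\ref{thm:-AC_0-Cauchy=Dedekind}. The extra work you do---upgrading the $\QQ$-isomorphism $\Colc(T,\RR)\congQ\Co(T,\RR)$ to a literal equality via the canonical embedding and uniqueness of $\QQ$-morphisms---is exactly the step the paper leaves implicit here (and uses in the contrapositive direction in the proof of Corollary~\ref{cor:-DedRneqCauR}, where $\Colc(\RR,\RR)\subsetneqq\Co(\RR,\RR)$ is said to ``easily'' imply $\Colc(\RR,\RR)\not\congQ\Co(\RR,\RR)$).
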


\begin{proof}
  This is immediate from Theorems \ref{thm:-DedRCauR} and
  \ref{thm:-AC_0-Cauchy=Dedekind}.
\end{proof}

\begin{rem}\label{rem:notAC0}
  We noted in Remark \ref{rem:-ADCR} that $\AC_0$ fails in $\Sh(\RR)$.
  Now Corollaries \ref{cor:-DedRneqCauR} and \ref{cor:-CoTR=ColcTR}
  provide another proof of this fact.
\end{rem}

\begin{thm}\label{thm:-AC_0KB}
  $\AC_0$ and $\RR_C\congQ\RR_D$ hold in $\Sh(\NN^\NN)$ and in
  $\Sh(K)$ for any poset space $K$.
\end{thm}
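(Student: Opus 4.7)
The plan is to obtain the two conclusions ($\AC_0$ and $\RR_C\congQ\RR_D$) essentially as direct corollaries of results already proved in Sections \ref{sec:Sh(K)} and \ref{sec:Numbers}. The key observation is that $\AC_0$ is, by Definition \ref{dfn:-AC_0}, just the instance of $\AC(\sigma,\tau)$ with $M_\sigma=\NNsh$, and $\NNsh=\Colc(T,\NN)$ is by Definition \ref{dfn:-Xsh} a \emph{simple sheaf}. So everything will reduce to applying theorems about simple sheaves.

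First I would handle $\AC_0$ for sheaf models over poset spaces. For any poset space $K$, the sheaf $\NNsh$ is simple, so Theorem \ref{thm:-ADCK} applied with $M_\sigma=\NNsh$ (and arbitrary $M_\tau$) immediately gives $\Sh(K,\mu)\models\AC(\sigma,\tau)$, which is precisely $\AC_0$. Next I would handle $\AC_0$ for $\Sh(\NN^\NN)$ by the same move, but using Theorem \ref{thm:-ADCB} in place of Theorem \ref{thm:-ADCK}: again $\NNsh$ is simple, so $\Sh(\NN^\NN,\mu)\models\AC(\sigma,\tau)$ for $M_\sigma=\NNsh$ and arbitrary $M_\tau$, which is $\AC_0$.

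Finally, having established $\AC_0$ in both $\Sh(K)$ and $\Sh(\NN^\NN)$, the $\QQ$-isomorphism $\RR_C\congQ\RR_D$ follows in each case directly from Theorem \ref{thm:-AC_0-Cauchy=Dedekind}, which says $\AC_0$ implies $\RR_C\congQ\RR_D$ in any sheaf model $\Sh(T)$.

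There is really no serious obstacle here: all the work has already been done in Theorems \ref{thm:-ADCK}, \ref{thm:-ADCB}, and \ref{thm:-AC_0-Cauchy=Dedekind}. The only thing to verify is the bookkeeping, namely that $\NNsh$ is a simple sheaf in the sense of Definition \ref{dfn:-Xsh} (immediate, since $\NNsh=\Colc(T,\NN)=\widehat{\NN}^\sh$) so that the simple-sheaf hypothesis of Theorems \ref{thm:-ADCK} and \ref{thm:-ADCB} is met. The proof is therefore essentially a one-line citation for each of the two spaces.
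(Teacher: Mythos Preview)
Your proposal is correct and follows essentially the same route as the paper, which simply cites Theorems \ref{thm:-ADCK}, \ref{thm:-ADCB}, and \ref{thm:-AC_0-Cauchy=Dedekind}. (In fact the paper's printed proof contains a spurious self-reference to Theorem \ref{thm:-AC_0KB}; your version, which spells out that $\NNsh$ is simple and then invokes the three theorems, is cleaner.)
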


\begin{proof}
  This is immediate from Theorems \ref{thm:-ADCK}, \ref{thm:-ADCB},
  \ref{thm:-AC_0-Cauchy=Dedekind}, and \ref{thm:-AC_0KB}.
\end{proof}

\begin{rem}
  There are continuous functions from $\NN^\NN$ into $\RR$ which are
  not locally constant.  Thus $\NN^\NN$ is an example of a topological
  space $T$ such that in $\Sh(T)$ we have
  $\RR_C\congQ\RR_D\congQ\Co(T,\RR)\supsetneqq\Colc(T,\RR)$, hence
  $\RR_C\not\congQ\Colc(T,\RR)$.
\end{rem}

\section{The Muchnik topos and the Muchnik reals}
\label{sec:Muchnik topos}

In this section we discuss a particular sheaf model which we call
\emph{the Muchnik topos}.  We show that the Muchnik topos provides a
model of intuitionistic mathematics which is a natural extension of
the well known Kolmogorov/Muchnik interpretation of intuitionistic
propositional calculus via mass problems under weak reducibility,
i.e., Muchnik degrees.  Within the Muchnik topos we define a sheaf
representation of the real number system which we call \emph{the
  Muchnik reals}.  We prove a choice principle and a bounding
principle for the Muchnik reals.

\subsection{The Muchnik topos}
\label{subsec:Muchnik topos}

\begin{dfn}
  For $f,g\in\NN^\NN$ we say that $f$ is \emph{Turing reducible} to
  $g$, denoted $f\leT g$, if $f$ is computable using $g$ as a Turing
  oracle.  It can be shown that $\leT$ is transitive and reflexive on
  $\NN^\NN$.  We say that $f$ is \emph{Turing equivalent} to $g$,
  denoted $f\eqT g$, if $f\leT g$ and $g\leT f$.  Clearly $\eqT$ is an
  equivalence relation on $\NN^\NN$.  The \emph{Turing degree} of $f$,
  denoted $\degT(f)$, is the equivalence class of $f$ under $\eqT$.
  The set of all Turing degrees is denoted $\DT$.  We partially order
  $\DT$ by letting $\degT(f)\le\degT(g)$ if and only if $f\leT g$.
\end{dfn}

\begin{lem}
  Some well known facts about the poset $\DT$ are as follows.
  \begin{enumerate}
  \item There is a bottom Turing degree $\ooo=\degT(f)$ for computable
    $f\in\NN^\NN$.
  \item Any two Turing degrees have a \emph{supremum}, i.e., a least
    upper bound, given by $\sup(\degT(f),\degT(g))=\degT((f,g))$ where
    $(f,g)\in\NN^\NN$ is given by $(f,g)(2i)=f(i)$ and
    $(f,g)(2i+1)=g(i)$ for all $i\in\NN$.
  \item However, two incomparable Turing degrees may or may not have
    an \emph{infimum}, i.e., a greatest lower bound, in $\DT$.
  \item Thus $\DT$ is an upper semi-lattice, hence a directed poset,
    but not a lattice.
  \end{enumerate}
\end{lem}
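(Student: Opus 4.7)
The plan is to address the four clauses in order, relying on standard facts from classical computability theory.

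For clause (1), I would first observe that if $f$ is computable, then $f \leT g$ for every $g \in \NN^\NN$, since a computation that ignores its oracle is a special case of an oracle computation. Conversely, any two computable functions are Turing-equivalent (each is computable outright, hence relative to the other), so they all belong to a single Turing degree, which we call $\ooo$. This $\ooo$ is a lower bound for $\DT$ by what was just said.

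For clause (2), given $f, g \in \NN^\NN$, define $(f,g) \in \NN^\NN$ by $(f,g)(2i) = f(i)$, $(f,g)(2i+1) = g(i)$. Both $f \leT (f,g)$ and $g \leT (f,g)$ are witnessed by trivial oracle programs that read only the even (respectively odd) components of the oracle. For the least upper bound property, if $h \in \NN^\NN$ satisfies $f \leT h$ and $g \leT h$, then by combining the two witnessing oracle procedures one obtains an oracle procedure that computes $(f,g)$ from $h$, so $\degT((f,g)) \le \degT(h)$. Well-definedness modulo $\eqT$ is routine.

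For clause (3), I would split into two sub-claims: (a) there exist incomparable Turing degrees with an infimum, and (b) there exist incomparable Turing degrees with no infimum. For (a) I would invoke the minimal pair theorem of Lachlan and Yates, which produces incomparable computably enumerable degrees $\aaa, \bbb$ whose infimum is $\ooo$. For (b) I would invoke a standard result in the structure theory of $\DT$ (originally due to Kleene--Post style constructions, and refined by Lachlan) asserting that there exist pairs of degrees admitting no greatest lower bound. Rather than give the priority-argument constructions in full, I would cite a standard reference such as Rogers or Soare. This step is by far the main obstacle, since both sub-claims are nontrivial theorems of degree theory whose proofs involve finite-injury or infinite-injury priority arguments; within the scope of the present paper it is appropriate to import them as black boxes.

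Finally, for clause (4), the upper semi-lattice property is exactly clause (2), and directedness follows at once since each pair $\{\degT(f), \degT(g)\}$ has the upper bound $\degT((f,g))$. The failure of the lattice property is precisely sub-claim (b) of clause (3). Thus the conclusion drops out, and no further work beyond collecting the previous items is required.
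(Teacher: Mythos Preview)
Your proposal is correct, and in fact goes further than the paper does: the paper states this lemma as ``well known facts'' and gives no proof whatsoever, simply moving on to the next definition. Your sketch of clauses (1), (2), and (4) is standard and complete, and your decision to treat clause (3) as a black box with a citation to Rogers (and the Lachlan--Yates minimal pair theorem for the positive half) is exactly the right call in this context; the paper's reference list includes \cite{rogers} for precisely this kind of background. There is no substantive difference in approach to flag---you have supplied what the paper omits.
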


\begin{dfn}
  A \emph{mass problem} is a set $P\subseteq\NN^\NN$.  For
  $P,Q\subseteq\NN^\NN$ we say that $P$ is \emph{weakly reducible} to
  $Q$, denoted $P\lew Q$, if for all $g\in Q$ there exists $f\in P$
  such that $f\leT g$.  Clearly $\lew$ is reflexive and transitive on
  the powerset of $\NN^\NN$. We say that $P$ is \emph{weakly
    equivalent} to $Q$, denoted $P\eqw Q$, if $P\lew Q$ and $Q\lew P$.
  Clearly $\eqw$ is an equivalence relation on the power set of
  $\NN^\NN$.  The \emph{weak degree} or \emph{Muchnik degree} of a
  mass problem $P$, denoted $\degw(P)$, is the equivalence class of
  $P$ under $\eqw$.  The set of all Muchnik degrees is denoted $\Dw$.
  We partially order $\Dw$ by letting $\degw(P)\le\degw(Q)$ if and
  only if $P\lew Q$.
\end{dfn}

\begin{rem}
  There is a natural embedding of the Turing degrees, $\DT$, into the
  Muchnik degrees, $\Dw$, given by $\degT(f)\mapsto\degw(\{f\})$.
  This embedding is one-to-one and \emph{order-preserving}, i.e.,
  $f\leT g$ if and only if $\{f\}\lew\{g\}$.  Moreover, this embedding
  preserves the bottom Turing degree and the supremum of any two
  Turing degrees. However, it does not preserve the infimum of two
  incomparable Turing degrees, even when the infimum exists.
\end{rem}

\begin{dfn}
  A \emph{lattice} is a poset such that for any two elements $\aaa$
  and $\bbb$ there exists a \emph{supremum} or least upper bound,
  $\sup(\aaa,\bbb)$, and an \emph{infimum} or greatest lower bound,
  $\inf(\aaa,\bbb)$.  A lattice is said to be \emph{complete} if for
  every set of elements $\{\aaa_i\}_{i\in I}$ there exists a
  \emph{supremum} or least upper bound, $\sup_{i\in I}\aaa_i$, and an
  \emph{infimum} or greatest lower bound, $\inf_{i\in I}\aaa_i$.  Note
  that every complete lattice has a top element and a bottom element.
  A complete lattice is said to be \emph{completely distributive} if
  it satisfies $\inf(\sup_{i\in I}\aaa_i,\bbb)=\sup_{i\in
    I}\inf(\aaa_i,\bbb)$ and $\sup(\inf_{i\in
    I}\aaa_i,\bbb)=\inf_{i\in I}\sup(\aaa_i,\bbb)$ for all
  $\{\aaa_i\}_{i\in I}$ and all $\bbb$.
\end{dfn}

\begin{rem}
  Our reference for lattice theory is Birkhoff, second edition
  \cite{BIRK}.  Our reason for preferring the second edition to the
  third edition is explained in \cite[Remark 1.5]{SIMPSON5}.
\end{rem}

\begin{dfn}
  A set $U\subseteq\DT$ is said to be \emph{upwardly closed} if
  $\degT(g)\in U$ whenever $\degT(f)\in U$ for some $f\leT g$.  Let
  $\calU(\DT)$ be the set of upwardly closed subsets of $\DT$. We
  partially order $\calU(\DT)$ by the subset relation: $U\le V$ if and
  only if $U\subseteq V$.  Clearly $\calU(\DT)$ is a complete and
  completely distributive lattice.  To prove this, one uses only the
  fact that $\DT$ is a poset.
\end{dfn}

\begin{thm}\label{thm:-Turwead}
  The posets $\Dw$ and $\calU(\DT)$ are dually isomorphic.  That is,
  there is an order-reversing one-to-one correspondence between $\Dw$
  and $\calU(\DT)$.
\end{thm}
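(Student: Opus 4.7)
The plan is to produce an explicit order-reversing bijection $\Phi:\Dw\to\calU(\DT)$ by sending a Muchnik degree to the set of Turing degrees that compute at least one member of any (equivalently, every) representative mass problem.

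First, for any mass problem $P\subseteq\NN^\NN$, define
\[
\Phi_0(P)=\{\degT(g)\mid\exists f\in P,\,f\leT g\}.
\]
I would verify that $\Phi_0(P)\in\calU(\DT)$: if $\degT(g)\in\Phi_0(P)$ and $g\leT h$, then witnessing $f\in P$ with $f\leT g\leT h$ shows $\degT(h)\in\Phi_0(P)$; this uses only transitivity of $\leT$ and the fact that $\eqT$-equivalent reals compute the same things, so membership in $\Phi_0(P)$ depends only on the Turing degree.

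The key step is the identity
\[
P\lew Q\quad\liff\quad \Phi_0(Q)\subseteq\Phi_0(P).
\]
For the forward direction, fix $\degT(g)\in\Phi_0(Q)$; then some $h\in Q$ satisfies $h\leT g$, and by $P\lew Q$ applied to $h$ there is $f\in P$ with $f\leT h\leT g$, so $\degT(g)\in\Phi_0(P)$. Conversely, if $\Phi_0(Q)\subseteq\Phi_0(P)$, then for any $g\in Q$ we have $\degT(g)\in\Phi_0(Q)\subseteq\Phi_0(P)$, yielding $f\in P$ with $f\leT g$, hence $P\lew Q$. This immediately shows that $\Phi_0$ is constant on $\eqw$-classes and descends to a well-defined map $\Phi:\Dw\to\calU(\DT)$ with $\degw(P)\le\degw(Q)\liff \Phi(\degw(P))\supseteq\Phi(\degw(Q))$, giving both order-reversal and injectivity.

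It remains to prove surjectivity of $\Phi$. Given $U\in\calU(\DT)$, set
\[
P_U=\{g\in\NN^\NN\mid\degT(g)\in U\}.
\]
Then $\Phi_0(P_U)=U$: the inclusion $U\subseteq\Phi_0(P_U)$ is witnessed by taking $f=g$ for each $g$ with $\degT(g)\in U$; the reverse inclusion uses upward closure of $U$, since any $\degT(g)\in\Phi_0(P_U)$ arises from $f\in P_U$ with $f\leT g$, forcing $\degT(g)\ge\degT(f)\in U$ and hence $\degT(g)\in U$. Thus $\Phi(\degw(P_U))=U$, completing the bijection.

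The only point requiring any care is confirming that $\Phi_0$ factors through $\eqw$-classes and that the biconditional $P\lew Q\liff\Phi_0(Q)\subseteq\Phi_0(P)$ is genuinely an equivalence rather than merely an implication; after that, well-definedness, injectivity, order-reversal, and surjectivity all follow from one short calculation each. I expect no real obstacle—everything hinges on exploiting upward closure of the sets in $\calU(\DT)$ and transitivity of $\leT$.
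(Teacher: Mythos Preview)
Your proof is correct and follows essentially the same approach as the paper. The only cosmetic difference is direction: the paper defines $\Psi:\calU(\DT)\to\Dw$ by $\Psi(U)=\degw(\{f\mid\degT(f)\in U\})$ and asserts it is a one-to-one, onto, order-reversing correspondence, whereas you build the inverse map $\Phi:\Dw\to\calU(\DT)$ first and recover the paper's $\Psi$ as your $U\mapsto\degw(P_U)$ in the surjectivity step; the underlying bijection is identical, and you have simply supplied the details the paper leaves as ``straightforward to verify.''
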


\begin{proof}
  Define $\Psi:\calU(\DT)\to\Dw$ by letting
  $\Psi(U)=\degw(\{f\mid\degT(f)\in U\})$ for all $U\in\calU(\DT)$.
  It is straightforward to verify that $\Psi$ is one-to-one, onto, and
  \emph{order-reversing}, i.e., $U\subseteq V$ if and only if
  $\Psi(U)\ge\Psi(V)$.  In proving these properties, one uses only the
  fact that $\leT$ is reflexive and transitive on $\NN^\NN$.
\end{proof}

\begin{cor}
  $\Dw$ is a complete and completely distributive lattice.  The
  lattice operations in $\Dw$ are given by
  \[
  \begin{array}{c}
    \sup(\aaa,\bbb)=\Psi(\Psi^{-1}(\aaa)\cap\Psi^{-1}(\bbb))\,,\quad
    \inf(\aaa,\bbb)=\Psi(\Psi^{-1}(\aaa)\cup\Psi^{-1}(\bbb))\,,\\[6pt]
    \displaystyle\sup_{i\in I}\aaa_i=\Psi\left(\bigcap_{i\in
        I}\Psi^{-1}(\aaa_i)\right),\quad
    \displaystyle\inf_{i\in I}\aaa_i=\Psi\left(\bigcup_{i\in
        I}\Psi^{-1}(\aaa_i)\right)
  \end{array}
  \]
  where $\Psi$ is as in the proof of Theorem \ref{thm:-Turwead}.
  Moreover, the top degree in $\Dw$ is
  $\mathbf{\infty}=\Psi(\emptyset)=\degw(\emptyset)$ and the bottom
  degree in $\Dw$ is $\ooo=\Psi(\DT)=\degw(\{f\mid f$ is
  computable$\})$.
\end{cor}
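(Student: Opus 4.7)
The plan is to derive the corollary entirely from Theorem \ref{thm:-Turwead}, together with the observation already recorded in the definition above that $\calU(\DT)$ ordered by inclusion is itself a complete and completely distributive lattice, in which suprema are unions and infima are intersections.

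Since $\Psi$ is an order-reversing bijection from $\calU(\DT)$ onto $\Dw$, it transports the complete lattice structure while interchanging suprema with infima; this immediately gives that $\Dw$ is a complete lattice. Complete distributivity is a self-dual property: the two identities $\inf(\sup_{i\in I}\aaa_i,\bbb)=\sup_{i\in I}\inf(\aaa_i,\bbb)$ and $\sup(\inf_{i\in I}\aaa_i,\bbb)=\inf_{i\in I}\sup(\aaa_i,\bbb)$ are swapped under duality, so the conjunction of the two laws is preserved. Hence $\Dw$ is also completely distributive.

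Next I would explicitly unpack the operation formulas. Given $\aaa,\bbb\in\Dw$, set $U=\Psi^{-1}(\aaa)$ and $V=\Psi^{-1}(\bbb)$. In $\calU(\DT)$ we have $\sup(U,V)=U\cup V$ and $\inf(U,V)=U\cap V$. Because $\Psi$ reverses order, it sends the supremum in $\calU(\DT)$ to the infimum in $\Dw$ and vice versa, which yields precisely $\inf(\aaa,\bbb)=\Psi(U\cup V)$ and $\sup(\aaa,\bbb)=\Psi(U\cap V)$. Exactly the same reasoning, applied to an arbitrary family $\{\Psi^{-1}(\aaa_i)\}_{i\in I}$, yields the infinitary formulas stated in the corollary.

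For the extremal elements, the bottom of $\calU(\DT)$ is $\emptyset$ and the top is $\DT$, so the order-reversing $\Psi$ sends $\emptyset$ to the top of $\Dw$ and $\DT$ to its bottom. Unwinding the definition of $\Psi$ gives $\Psi(\emptyset)=\degw(\{f\mid\degT(f)\in\emptyset\})=\degw(\emptyset)=\mathbf{\infty}$, and $\Psi(\DT)=\degw(\{f\mid\degT(f)\in\DT\})=\degw(\NN^\NN)$. The only remaining check is that $\degw(\NN^\NN)=\degw(\{f\mid f\hbox{ is computable}\})$, which is immediate since each of these two mass problems trivially weakly reduces to the other. I do not anticipate any real obstacle; the entire proof is a routine transport of structure through the dual isomorphism $\Psi$ supplied by Theorem \ref{thm:-Turwead}.
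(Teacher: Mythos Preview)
Your proposal is correct and follows exactly the approach implicit in the paper: the corollary is stated there without proof, as an immediate consequence of the dual isomorphism $\Psi$ of Theorem~\ref{thm:-Turwead} together with the already-noted fact that $\calU(\DT)$ is complete and completely distributive. Your explicit verification that the paper's two distributivity identities are interchanged under duality, and your unwinding of $\Psi(\emptyset)$ and $\Psi(\DT)$, are precisely the routine details the paper leaves to the reader.
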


\begin{dfn}
  We define the \emph{Muchnik topos} to be the sheaf model $\Sh(\DT)$.
  Here $\DT$ is a poset space as usual, with the Alexandrov topology,
  where the open sets are the upward closed subsets of $\DT$.
\end{dfn}

\begin{rem}
  Our terminology ``the Muchnik topos'' is motivated by Theorem
  \ref{thm:-Turwead}.  Note that set $\Omega$ of truth values in
  $\Sh(\DT)$ is just $\calU(\DT)$.  Moreover, the propositional
  connectives of Section \ref{sec:sheaves and logic} correspond via
  $\Psi$ to lattice operations in the Muchnik lattice $\Dw$.  Namely,
  for all $\lang(\mu)$-sentences $A$ and $B$, letting $\Psi(\llb
  A\rrb)=\aaa$ and $\Psi(\llb B\rrb)=\bbb$ we have
  \[
  \begin{array}{l}
    \Psi(\llb A\land B\rrb)=\sup(\aaa,\bbb),\quad
    \Psi(\llb A\lor B\rrb)=\inf(\aaa,\bbb),\\[8pt]
    \Psi(\llb A\limp
    B\rrb)=\imp(\aaa,\bbb)=\inf\{\ccc\mid\sup(\aaa,\ccc)\ge\bbb\},\\[8pt]
    \Psi(\llb\lnot A\rrb)=\imp(\aaa,\mathbf{\infty}).
  \end{array}
  \]
  Moreover, $\Sh(\DT,\mu)\models A\limp B$ if and only if
  $\aaa\ge\bbb$, i.e., $\llb A\rrb\subseteq\llb B\rrb$, and
  $\Sh(\DT,\mu)\models B$ if and only if $\bbb=\ooo$, i.e., $\llb
  B\rrb=\DT$.  Thus the Muchnik topos $\Sh(\DT)$ provides a natural
  extension of Muchnik's $\Dw$ interpretation of intuitionistic
  propositional calculus \cite[Section 1]{MUCH} to intuitionistic
  higher-order logic.  See also our translation of \cite{MUCH} in the
  Appendix below.
\end{rem}

\begin{thm}\label{thm:-ADCDT}
  The Muchnik topos $\Sh(\DT)$ satisfies $\AC(\sigma,\tau)$ whenever
  $M_\sigma$ is a simple sheaf.  In particular, $\Sh(\DT)$ satisfies
  $\AC(\sigma,\tau)$ for $M_\sigma=\RR_C$.
\end{thm}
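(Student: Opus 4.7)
The plan is to reduce the statement directly to results already proved about sheaf models over arbitrary poset spaces. The key observation is that the Muchnik topos $\Sh(\DT)$ is, by definition, the category of sheaves over $\DT$ equipped with the Alexandrov topology, so $\DT$ is a poset space in exactly the sense of Subsection \ref{subsec:sheaves on posets}. Nothing about $\DT$ beyond its poset structure will be used; in particular, we do not invoke the lattice-theoretic properties of $\Dw$ or Theorem \ref{thm:-Turwead}.

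For the first claim, the plan is simply to cite Theorem \ref{thm:-ADCK} with $K=\DT$. That theorem asserts precisely that for any poset space $K$, if $M_\sigma$ is a simple sheaf over $K$ then $\Sh(K,\mu)\models\AC(\sigma,\tau)$. Since all hypotheses are verified by taking $K=\DT$, the conclusion $\Sh(\DT,\mu)\models\AC(\sigma,\tau)$ is immediate.

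For the second claim about $M_\sigma=\RR_C$, the plan is to invoke Corollary \ref{cor:RCRDK}, which (again for an arbitrary poset space $K$) shows that $\RR_C\congQ\RRsh$ within $\Sh(K)$ and that $\AC(\sigma,\tau)$ holds for $M_\sigma=\RR_C$. Applying this with $K=\DT$ yields the desired conclusion. If one prefers to derive the $\RR_C$ case directly from the first part of the present theorem, the plan is instead to apply the first part with $M_\sigma=\RRsh=\Colc(\DT,\RR)$ (which is a simple sheaf in the sense of Definition \ref{dfn:-Xsh}), and then transfer validity along the $\QQ$-isomorphism $\RR_C\congQ\RRsh$ guaranteed by Corollary \ref{cor:RCRDK}, using substitution of equals (Theorem \ref{thm:-subeq}).

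There is no genuine obstacle: all the hard work was done in proving Theorem \ref{thm:-ADCK}, where one had to construct, from a pointwise choice at a point $\alpha\in U$, a sheaf morphism on $U_\alpha=\{\beta\mid\alpha\le\beta\}$ by assembling the chosen $b_x$ along the partition given by $a^{-1}(x)$ and exploiting local constancy. The present theorem is just the specialization of that construction to the poset $\DT$ of Turing degrees; no new technique is required.
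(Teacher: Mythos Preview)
Your proposal is correct and takes essentially the same approach as the paper: the paper's proof consists of the single sentence ``This follows from Theorem \ref{thm:-ADCK} and Corollary \ref{cor:RCRDK} since $\DT$ is a poset,'' which is exactly the reduction you describe.
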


\begin{proof}
  This follows from Theorem \ref{thm:-ADCK} and Corollary
  \ref{cor:RCRDK} since $\DT$ is a poset.
\end{proof}

\begin{thm}\label{thm:-Rd=Rc_DT}
  In the Muchnik topos $\Sh(\DT)$, the Cauchy reals $\RR_C$ and the
  Dedekind reals $\RR_D$ are $\QQ$-isomorphic to each other and to
  $\Co(\DT,\RR)=\Colc(\DT,\RR)=\Coc(\DT,\RR)$.
\end{thm}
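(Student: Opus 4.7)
The plan is to deduce this theorem as a direct specialization of Corollary \ref{cor:-DirPoset}, once we verify that $\DT$ is a directed poset space. The bulk of the work has already been done in Section \ref{sec:Numbers}, so the proof is essentially a one-line application, but I would structure it in two clearly separated observations to keep the argument transparent.

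First, I would recall that by definition the Muchnik topos is $\Sh(\DT)$, where $\DT$ carries the Alexandrov topology of its upward-closed subsets. Thus $\DT$ qualifies as a poset space in the sense of Section \ref{sec:Sh(K)}. Second, I would verify that $\DT$ is a directed poset: given any $\degT(f),\degT(g)\in\DT$, the join $\degT((f,g))$ exists in $\DT$ and bounds both (this is exactly part (2) of the lemma on well-known facts about $\DT$ stated earlier in Subsection \ref{subsec:Muchnik topos}). Hence $\DT$ is a directed poset space.

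With these two observations in hand, Corollary \ref{cor:-DirPoset} applies verbatim with $K=\DT$, yielding
\[
\RR_C\congQ\RR_D\congQ\Coc(\DT,\RR)=\Colc(\DT,\RR)=\Co(\DT,\RR)
\]
in $\Sh(\DT)$, which is precisely the conclusion.

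There is essentially no obstacle here; the only thing to be careful about is to note explicitly that directedness of $\DT$ (not merely being a poset) is what makes $\Coc(\DT,\RR)$ coincide with $\Colc(\DT,\RR)$, via Lemma \ref{lem:-DirPosetLem}. Without directedness we would only get the weaker chain of isomorphisms from Corollary \ref{cor:RCRDK}, which excludes the constant-functions sheaf.
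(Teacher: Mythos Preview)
Your proposal is correct and matches the paper's own proof essentially verbatim: the paper simply notes that $\DT$ is a directed poset and invokes Corollary \ref{cor:-DirPoset}. Your additional remarks explaining why $\DT$ is directed and why directedness (rather than merely being a poset) is needed for the $\Coc$ equality are accurate elaborations of the same one-line argument.
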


\begin{proof}
  This follows from Corollary \ref{cor:-DirPoset} because $\DT$ is a
  directed poset.
\end{proof}

\subsection{The Muchnik reals}
\label{subsec:Muchnik reals}

\begin{dfn}
  Let $\#:\QQ\to\NN$ be a standard G\"odel numbering of the rational
  numbers.  For instance, we could define $\#(q)$ for $q\in\QQ$ by
  \[
  \#(q)=\left\{
    \begin{array}{lll}
      1&\hbox{ if }&q=0,\\[4pt]
      2\cdot3^a\cdot5^b&\hbox{ if
      }&q=a/b\hbox{ where }a,b\in\NN\setminus\{0\}\hbox{ and
      }\gcd(a,b)=1,\\[4pt]
      4\cdot3^a\cdot5^b&\hbox{ if
      }&q=-a/b\hbox{ where }a,b\in\NN\setminus\{0\}\hbox{ and
      }\gcd(a,b)=1.
    \end{array}
  \right.
  \]
  For real numbers $x\in\RR$ we define the \emph{Turing degree of $x$}
  to be $\degT(x)=\degT(f_x)$ where $f_x\in\NN^\NN$ is given by
  $f_x(i)=1$ if $i=\#(q)$ for some $q\in\QQ$ such that $q<x$,
  otherwise $f_x(i)=0$, for all $i\in\NN$.
\end{dfn}

\begin{dfn}
  In $\Sh(\DT)$, the \emph{Muchnik reals} are the sections of the
  sheaf
  \begin{center}
    $\RR_M=\{a\in\Coc(\DT,\RR)\mid\forall\ddd\,(\ddd\in\dom(a)\limp
    \degT(a(\ddd))\le\ddd)\}$.
  \end{center}
  For $a\in\RR_M$ such that $a\ne\emptyset$, let $\abar\in\RR$ be such
  that $\rng(a)=\{\abar\}$, and let $\ahat=$ the maximal $c\in\RR_M$
  such that $a\le c$, i.e., the unique $\ahat\in\RR_M$ such that
  $\rng(\ahat)=\{\abar\}$ and $\dom(\ahat)=$ the Turing upward closure
  of $\{\degT(\abar)\}$.  For $a=\emptyset$ let $\ahat=\emptyset$ and
  let $\abar$ be undefined.  Note that $\emptyset\ne a\le b$ implies
  $\abar=\bbar$ and $\ahat=\bhat$.
\end{dfn}

\begin{rem}
  As we know from Theorem \ref{thm:-Rd=Rc_DT}, the Cauchy reals
  $\RR_C$ and the Dedekind reals $\RR_D$ are represented in $\Sh(\DT)$
  by $\Coc(\DT,\RR)$, the sheaf of constant functions from upward
  closed sets of Turing degrees into $\RR$.  However, not all such
  constant functions are Muchnik reals.  The Muchnik reals are those
  $a\in\Coc(\DT,\RR)$ such that either $a=\emptyset$ or
  $\dom(a)\subseteq$ the upward closure of $\{\degT(\abar)\}$ where
  $\rng(a)=\{\abar\}$.  Thus $\RR_M$ is a proper subsheaf of
  $\Coc(\DT,\RR)$, so $\RR_M\not\congQ\RR_C\congQ\RR_D$.  Informally,
  a Muchnik real is a real number which ``comes into existence'' only
  when we have enough Turing oracle power to compute it.
\end{rem}

\subsection{A bounding principle for the Muchnik reals}
\label{subsec:ACBP}

By Theorem \ref{thm:-ADCDT} the Muchnik topos $\Sh(\DT)$ satisfies a
choice principle for $\RR_C$, the Cauchy reals.  In this subsection we
prove that for $\RR_M$, the Muchnik reals, $\Sh(\DT)$ satisfies not
only a choice principle but also a bounding principle.

\begin{dfn}\label{dfn:-Turing Reducible in Sh(D_T)}
  Let $r,s,t$ be closed terms of sort $\sigma$ where $M_\sigma=\RR_M$.
  Then $a,b,c\in\RR_M$ where $a=\llb r\rrb$, $b=\llb s\rrb$, $c=\llb
  t\rrb$.  We define $\llb r\leT s\rrb=E(a)\cap E(b)$ if
  $a,b\ne\emptyset$ and $\abar\leT\bbar$, otherwise $\llb r\leT
  s\rrb=\emptyset$.  We define $\llb r\leT(s,t)\rrb=E(a)\cap E(b)\cap
  E(c)$ if $a,b,c\ne\emptyset$ and $\abar\leT(\bbar,\cbar)$, otherwise
  $\llb r\leT(s,t)\rrb=\emptyset$.  Our \emph{bounding principle}
  $\BP(\sigma,\sigma)$ for the Muchnik reals is
  \begin{center}
    $(\forall x\,\exists y\,A(x,y))\limp\exists z\,\forall x\,\exists
    y\,(y\leT(x,z)\land A(x,y))$
  \end{center}
  where $x,y,z$ are variables of sort $\sigma$ and $A(x,y)$ is any
  $L$-formula which does not contain $z$.
\end{dfn}

\begin{thm}\label{thm:ACBP}
  The Muchnik topos $\Sh(\DT)$ satisfies a combined \emph{choice and
    bounding principle} $\ACBP(\sigma,\sigma)$ for the Muchnik reals,
  \begin{center}
    $(\forall x\,\exists y\,A(x,y))\limp\exists w\,\exists z\,\forall
    x\,(wx\leT(x,z)\land A(x,wx))$
  \end{center}
  where $x,y,z$ are variables of sort $\sigma$, $w$ is a variable of
  sort $\sigma\to\sigma$, $A(x,y)$ is any $\lang$-formula which
  does not contain $z$ or $w$, and $M_\sigma=\RR_M$.
\end{thm}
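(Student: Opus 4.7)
The plan is to adapt the proof of Theorem \ref{thm:-ADCK} to the sheaf of Muchnik reals, exploiting the fact that $\DT$ is an upper semi-lattice in which every Turing degree is realized by some real, in order to produce a bounding witness internally. Set $U=\llb\forall x\,\exists y\,A(x,y)\rrb$; the truth value of $\exists w\,\exists z\,\forall x\,(wx\leT(x,z)\land A(x,wx))$ is open, so it suffices to fix an arbitrary $\alpha\in U$ and exhibit a sheaf morphism $(\psi,U_\alpha)\in\RR_M^{\RR_M}$ together with a Muchnik real $c_\alpha\in\RR_M$ with $U_\alpha\subseteq E(c_\alpha)$ such that $U_\alpha\subseteq\llb\forall x\,(\psi x\leT(x,c_\alpha)\land A(x,\psi x))\rrb$.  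By Lemma \ref{lem:-Alexandrov}, this further reduces to verifying, for every $a\in\RR_M$, that $U_\alpha\cap E(a)\subseteq\llb\psi(a)\leT(a,c_\alpha)\rrb\cap\llb A(a,\psi(a))\rrb$.

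For the bounding witness, I would choose any real $\rho_\alpha\in\RR$ with $\degT(\rho_\alpha)=\alpha$ and let $c_\alpha\in\RR_M$ be the maximal Muchnik real with value $\rho_\alpha$, so that $E(c_\alpha)$ is the Turing upward closure of $\alpha$, which equals $U_\alpha$.  For each $r\in\RR$, let $r^{*}\in\RR_M$ denote the maximal Muchnik real with value $r$, in the sense of the $\widehat{\cdot}$ construction of Subsection \ref{subsec:Muchnik reals}.  The smallest degree in $E(r^{*})\cap U_\alpha$ is $\sup(\alpha,\degT(r))=\degT((r,\rho_\alpha))$, and the assumption $U_\alpha\subseteq U$ places this point in $\llb\exists y\,A(r^{*},y)\rrb$.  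Using the external axiom of choice, pick $b_r\in\RR_M$ with $\sup(\alpha,\degT(r))\in E(b_r)\cap\llb A(r^{*},b_r)\rrb$; automatically $\degT(\overline{b_r})\leT\sup(\alpha,\degT(r))=\degT((r,\rho_\alpha))$, which is the source of the bound.  For $a\in\RR_M$ with $E(a)\subseteq U_\alpha$ and $a\ne\emptyset$ set $\psi(a)=b_{\abar}\ser E(a)$, and $\psi(\emptyset)=\emptyset$.  Since $E(a)\subseteq U_\alpha\cap(\hbox{upward closure of }\degT(\abar))=(\hbox{upward closure of }\sup(\alpha,\degT(\abar)))\subseteq E(b_{\abar})$, extent is preserved, and preservation of restriction is immediate from $\overline{a\ser V}=\abar$ whenever $a\ser V\ne\emptyset$, so $(\psi,U_\alpha)$ is a sheaf morphism $\RR_M\ser U_\alpha\to\RR_M\ser U_\alpha$.

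The remaining verifications are routine.  The bounding clause $U_\alpha\cap E(a)\subseteq\llb\psi(a)\leT(a,c_\alpha)\rrb$ follows directly from Definition \ref{dfn:-Turing Reducible in Sh(D_T)} and the degree estimate $\degT(\overline{\psi(a)})=\degT(\overline{b_{\abar}})\leT\degT((\abar,\rho_\alpha))$.  For the $A$-clause, $a$ and $(\abar)^{*}$ agree on $E(a)$, and $\psi(a)$ and $b_{\abar}$ agree on $E(a)$, so Theorem \ref{thm:-subeq} gives $E(a)\cap\llb A((\abar)^{*},b_{\abar})\rrb\subseteq\llb A(a,\psi(a))\rrb$; since $U_\alpha\cap E(a)$ is upward closed and contains $\sup(\alpha,\degT(\abar))$, and since the open set $\llb A((\abar)^{*},b_{\abar})\rrb$ contains this supremum, we conclude $U_\alpha\cap E(a)\subseteq\llb A(a,\psi(a))\rrb$.

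The main conceptual obstacle is the construction of the Skolem function $\psi$.  Because $\RR_M$ is not a simple sheaf, one cannot parametrize $\psi$ by ``constants $\xhat$ for $x$ in some fixed set'' as in Theorem \ref{thm:-ADCK}; instead $\psi(a)$ must be produced from the underlying value $\abar$ via the maximal-extension construction.  The upper-semi-lattice structure of $\DT$ is precisely what makes the inclusion $E(a)\subseteq E(b_{\abar})$ hold, through $\sup(\alpha,\degT(\abar))=\degT((\abar,\rho_\alpha))$, and it is this same identity that ties the Skolem and bounding constructions into a single coherent pair $(\psi,c_\alpha)$.
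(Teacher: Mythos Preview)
Your proof is correct and follows essentially the same approach as the paper's: fix a base point $\alpha\in U$ (the paper fixes an equivalent $c=\chat$ with $E(c)\subseteq U$), choose witnesses $b_r$ at the join degree $\sup(\alpha,\degT(r))$ using the external axiom of choice, and assemble the Skolem morphism by $\psi(a)=b_{\abar}\ser E(a)$, with the bound coming automatically from $\degT(\overline{b_{\abar}})\le\sup(\alpha,\degT(\abar))$.  One small slip in your write-up: the phrase ``$U_\alpha\cap E(a)$ \ldots\ contains $\sup(\alpha,\degT(\abar))$'' is not generally true (take $E(a)$ strictly smaller than the cone above $\degT(\abar)$); what you actually need, and what suffices, is the containment $U_\alpha\cap E(a)\subseteq U_{\sup(\alpha,\degT(\abar))}\subseteq\llb A((\abar)^*,b_{\abar})\rrb$, the second inclusion holding because the right-hand side is upward closed and contains the join.
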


\begin{proof}
  We may safely assume that $A(x,y)$ has no free variables other than
  $x$ and $y$.  Letting $U=\llb\forall x\,\exists y\,A(x,y)\rrb$ and
  $V=\llb\exists w\,\exists z\,\forall x\,(wx\leT(x,z)\land
  A(x,wx))\rrb$, it will suffice to show that $U\subseteq V$.  Fix
  $c=\chat\ne\emptyset$ in $\RR_M$ such that $E(c)\subseteq U$.  It
  will suffice to show that $E(c)\subseteq V$.

  For each $a\ne\emptyset$ in $\RR_M$ we have $\degT((\abar,\cbar))\in
  E(\ahat)\cap E(c)=E(\ahat\ser E(c))\subseteq E(c)\subseteq U$, so
  choose $b\in\RR_M$ depending only on $\abar$ such that
  $\degT((\abar,\cbar))\in E(b)\cap\llb A(\ahat\ser E(c),b)\rrb$.  We
  then have $\bbar\leT(\abar,\cbar)$ and $E(b)\supseteq E(\ahat)\cap
  E(c)$, so by Theorem \ref{thm:-subeq} it follows that $E(a)\cap
  E(c)\subseteq\llb b\leT(a,c)\rrb\cap\llb A(a,b)\rrb$.  Moreover,
  since $b$ depends only on $\abar$, we have a sheaf morphism
  \[
  \RR_M\ser E(c)\stackrel{\varphi}{\to}\RR_M\ser E(c)
  \]
  where $\varphi(a\ser E(c))=b\ser E(a)\cap E(c)$ for all $a\in\RR_M$.
  Thus $(\varphi,E(c))\in\RR_M^{\RR_M}$ and
  $\llb(\varphi,E(c))a\rrb=b\ser E(a)\cap E(c)$, so by Theorem
  \ref{thm:-subeq} we have $E(a)\cap
  E(c)\subseteq\llb(\varphi,E(c))a\leT(a,c)\rrb\cap\llb
  A(a,(\varphi,E(c))a)\rrb$.  Since this holds for all $a\in\RR_M$, we
  now see that $E(c)\subseteq V$, and the proof is complete.
\end{proof}

\begin{cor}\label{cor:ACBP}
  $\Sh(\DT)$ satisfies $\AC(\sigma,\sigma)$ and $\BP(\sigma,\sigma)$
  for $M_\sigma=\RR_M$.
\end{cor}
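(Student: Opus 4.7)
The plan is to derive both $\AC(\sigma,\sigma)$ and $\BP(\sigma,\sigma)$ as straightforward logical consequences of the combined principle $\ACBP(\sigma,\sigma)$ established in Theorem \ref{thm:ACBP}. Since $\IHOL$ is sound for $\Sh(\DT)$ by Theorem \ref{thm:-IHOL}, it suffices to exhibit intuitionistic derivations of each of $\AC$ and $\BP$ from $\ACBP$, for any formula $A(x,y)$ of $\lang$ not containing $z$ or $w$.

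For $\AC(\sigma,\sigma)$, I would assume the antecedent $\forall x\,\exists y\,A(x,y)$ and apply $\ACBP(\sigma,\sigma)$ to obtain $w$ and $z$ such that $\forall x\,(wx\leT(x,z)\land A(x,wx))$. By $\land$-elimination inside the universal quantifier this yields $\forall x\,A(x,wx)$, and then $\exists$-introduction on $w$ gives $\exists w\,\forall x\,A(x,wx)$, as required.

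For $\BP(\sigma,\sigma)$, I would again assume $\forall x\,\exists y\,A(x,y)$, apply $\ACBP(\sigma,\sigma)$ to obtain $w$ and $z$ with $\forall x\,(wx\leT(x,z)\land A(x,wx))$, and then, fixing $x$, take $y := wx$ as a witness to $\exists y\,(y\leT(x,z)\land A(x,y))$. Universal generalization on $x$ followed by $\exists$-introduction on $z$ gives $\exists z\,\forall x\,\exists y\,(y\leT(x,z)\land A(x,y))$. Note that the side condition ``$A(x,y)$ does not contain $z$'' in $\BP(\sigma,\sigma)$ is guaranteed because we are applying $\ACBP$ to the same formula $A(x,y)$, which by hypothesis already contains neither $z$ nor $w$.

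There is no genuine obstacle here: both deductions are pure $\IHOL$ reasoning and use no further facts about $\DT$, $\RR_M$, or the sheaf semantics beyond what Theorem \ref{thm:ACBP} already provides. The only thing worth checking carefully is that the syntactic restrictions on $A(x,y)$ in the statements of $\AC(\sigma,\sigma)$ and $\BP(\sigma,\sigma)$ are compatible with those in $\ACBP(\sigma,\sigma)$, which they are.
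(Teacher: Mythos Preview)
Your proposal is correct and follows exactly the paper's own approach: the paper also observes that $\AC(\sigma,\sigma)$ and $\BP(\sigma,\sigma)$ are $\IHOL$-consequences of $\ACBP(\sigma,\sigma)$, and then invokes Theorems \ref{thm:ACBP} and \ref{thm:-IHOL}. Your proposal simply spells out the two easy intuitionistic derivations that the paper leaves implicit.
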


\begin{proof}
  Within the formal system $\IHOL$, $\AC(\sigma,\sigma)$ and
  $\BP(\sigma,\sigma)$ are logical consequences of
  $\ACBP(\sigma,\sigma)$.  Therefore, the corollary follows from
  Theorems \ref{thm:ACBP} and \ref{thm:-IHOL}.  More details may be
  found in \cite[pages 99--106]{basu}.
\end{proof}

\begin{rem}
  In our proof of Theorem \ref{thm:ACBP}, one may avoid using the
  axiom of choice, as follows.  First, given $a\ne\emptyset$ in
  $\RR_M$, let $e_a$ be the smallest index $e\in\NN$ of a partial
  recursive functional $\Phi_e$ such that
  $\Phi_e((\abar,\cbar))=\bbar$ for some $b\in\RR_M$ such that
  $\degT((\abar,\cbar))\in E(b)\cap\llb A(\ahat\ser E(c),b)\rrb$.
  Then, choose $b=\bhat$.
\end{rem}

\begin{thm}\label{thm:-CHPDT}
  $\Sh(\DT)$ satisfies $\AC(\sigma,\tau)$ for $M_\sigma=\RR_M$ and
  $M_\tau$ arbitrary.
\end{thm}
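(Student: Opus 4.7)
The plan is to adapt the proof of Theorem~\ref{thm:-ADCK} to $\RR_M$, exploiting that, while $\RR_M$ is not a simple sheaf, it has a parametrization of the same flavor: every nonempty $a\in\RR_M$ equals $\widehat{r}\ser E(a)$, where $r=\abar\in\RR$ and $\widehat{r}$ denotes the unique maximal Muchnik real with $\rng(\widehat{r})=\{r\}$ and $E(\widehat{r})=\{\ddd\in\DT\mid\degT(r)\leq\ddd\}$. I may safely assume $A(x,y)$ has no free variables besides $x,y$. Setting $U=\llb\forall x\,\exists y\,A(x,y)\rrb$, it suffices to prove $U\subseteq\llb\exists w\,\forall x\,A(x,wx)\rrb$, and for this I fix $\alpha\in U$ and construct $(\varphi,U_\alpha)\in M_\tau^{\RR_M}$ witnessing the existential on $U_\alpha$.

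For each $r\in\RR$, let $\ddd_r=\sup(\degT(r),\alpha)\in\DT$, which exists since $\DT$ is directed and has binary suprema. Since $U$ is upward closed and $\alpha\in U$, we have $\ddd_r\in U$; since $\ddd_r\geq\degT(r)$, we have $\ddd_r\in E(\widehat{r})$. Together these give $\ddd_r\in\llb\exists y\,A(\widehat{r},y)\rrb$, so applying the axiom of choice externally, I pick $b_r\in M_\tau$ with $\ddd_r\in E(b_r)\cap\llb A(\widehat{r},b_r)\rrb$. Both of these sets are open in $\DT$, hence upward closed, so $U_{\ddd_r}\subseteq E(b_r)\cap\llb A(\widehat{r},b_r)\rrb$.

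I then define $\varphi:\RR_M\ser U_\alpha\to M_\tau\ser U_\alpha$ by $\varphi(a)=b_{\abar}\ser E(a)$ when $a\neq\emptyset$ and $\varphi(\emptyset)=\bot$, where $\bot=\sup\emptyset\in M_\tau$ has empty extent. This is well-defined because, for $a\neq\emptyset$ with $E(a)\subseteq U_\alpha$, every $\ddd\in E(a)$ satisfies $\ddd\geq\alpha$ and (by membership in $\RR_M$) $\ddd\geq\degT(\abar)$, hence $\ddd\geq\ddd_{\abar}$, forcing $E(a)\subseteq U_{\ddd_{\abar}}\subseteq E(b_{\abar})$. The identities $E(\varphi(a))=E(a)$ and $\varphi(a\ser V)=\varphi(a)\ser V$ are routine, using that $\overline{a\ser V}=\abar$ whenever $a\ser V\neq\emptyset$.

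The rest of the argument mirrors Theorem~\ref{thm:-ADCK}. The key claim is $U_\alpha\cap E(a)\subseteq\llb A(a,(\varphi,U_\alpha)a)\rrb$ for every $a\in\RR_M$. When $a\ser U_\alpha\neq\emptyset$ with $r=\abar$, one has $(\varphi,U_\alpha)a=b_r\ser(E(a)\cap U_\alpha)$ and $a$ agrees with $\widehat{r}$ on $E(a)$, so by Theorem~\ref{thm:-subeq},
\[
E(a)\cap U_\alpha\;\subseteq\;\llb a=\widehat{r}\rrb\cap\llb(\varphi,U_\alpha)a=b_r\rrb\cap\llb A(\widehat{r},b_r)\rrb\;\subseteq\;\llb A(a,(\varphi,U_\alpha)a)\rrb,
\]
where the inclusion $E(a)\cap U_\alpha\subseteq\llb A(\widehat{r},b_r)\rrb$ comes from $E(a)\cap U_\alpha\subseteq U_{\ddd_r}$. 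The standard computation then gives $U_\alpha\subseteq\llb\forall x\,A(x,(\varphi,U_\alpha)x)\rrb\cap E((\varphi,U_\alpha))\subseteq\llb\exists w\,\forall x\,A(x,wx)\rrb$. The main technical obstacle — the only real departure from Theorem~\ref{thm:-ADCK} — is that $\widehat{r}$ is not a global section, so the choice of $b_r$ cannot be made ``at $\alpha$'' directly; evaluating at $\ddd_r=\sup(\degT(r),\alpha)$ is exactly the place where directedness of $\DT$ and the Alexandrov-topology-upward-closure of open sets cooperate to make the sheaf-morphism construction go through.
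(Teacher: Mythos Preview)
Your proof is correct and essentially the same as the paper's. The paper proves Theorem~\ref{thm:-CHPDT} by repeating the proof of Theorem~\ref{thm:ACBP} (omitting the $\leT$ parts): fix $c=\chat\ne\emptyset$ in $\RR_M$ with $E(c)\subseteq U$, for each $a\ne\emptyset$ use $\degT((\abar,\cbar))\in E(\ahat)\cap E(c)$ to choose $b$, and set $\varphi(a\ser E(c))=b\ser(E(a)\cap E(c))$. Your argument is the same with $\alpha$ in place of $\degT(\cbar)$ and $\ddd_r=\sup(\degT(r),\alpha)$ in place of $\degT((\abar,\cbar))$; since every Turing degree is the degree of some real, the two parametrizations of the base cone are interchangeable.
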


\begin{proof}
  Repeat the proof of Theorem \ref{thm:ACBP} but skip the parts that
  involve $\leT$.
\end{proof}

\begin{rem}
  Theorem \ref{thm:-CHPDT} resembles Theorem \ref{thm:-ADCDT}.
  However, Theorem \ref{thm:-ADCDT} applies only when $M_\sigma$ is a
  simple sheaf, while in Theorem \ref{thm:-CHPDT} we have
  $M_\sigma=\RR_M$ which is not a simple sheaf.  See also Remark
  \ref{rem:subXsh}.
\end{rem}

% The next three lines put References into the PDF outline and the TOC.
\newpage % may be needed if the references start at the top of a page
\phantomsection
\addcontentsline{toc}{section}{References}

%\bibliographystyle{plain}
%\bibliography{massihol-bib}

\section*{Appendix: translation of Muchnik's paper}
\addcontentsline{toc}{section}{Appendix: translation of Muchnik's paper}
\label{app:translation}

In this appendix we offer a translation of Muchnik's paper
\cite{MUCH}.  We started with a rough translation produced in 1964 by
the United States Department of Commerce \cite{JPRS-translation}.  We
have corrected some typographical and translation errors and updated
some bibliographical references.

\setcounter{footnote}{0}

\begin{center}
  \large Siberian Mathematical Journal\\[2pt]
  \large Vol.\ IV, No. 6, November--December, 1963\\[10pt]
  \large A. A. Muchnik\\[4pt]
  \Large Strong and weak reducibility of algorithmic problems
\end{center}

\subsection*{Introduction}
\addcontentsline{toc}{subsection}{Introduction}

The abstract (arithmetical) analysis of algorithmic problems was
initiated by S. Kleene and E. Post \cite{Post,Kleene-Post}. E. Post
introduced the concept of \emph{degree of unsolvability} of a problem,
while Kleene and Post investigated in \cite{Kleene-Post} the class of
degrees of unsolvability of arithmetical (in the sense of G\"odel)
sets. Papers along the same line were published subsequently.

The traditional algorithmic problems of algebra, number theory,
topology, and mathematical logic were problems of solvability. This
explains the predominant interest shown first in problems of
solvability of arithmetic (i.e., problems of solvability of sets of
natural numbers). Subsequently, however, in logic and its
applications, problems arose connected to separability, enumerability,
and isomorphism of sets
\cite{MedvedevA,Muchnik56,Trahtenbrot53,Uspenskiy}.

The definition of an algorithmic problem in abstract algorithm theory
was formulated by Yu.\ T. Medvedev, in which all the previously known
cases and many others were treated \cite{MedvedevA}.  The problem of
constructing an arithmetical function\footnote{I.e., a function
  defined on the natural numbers $\NN$ and assuming values from $\NN$,
  which includes also $0$.} satisfying certain conditions is called a
\emph{Medvedev problem} (\emph{M-problem}).  To each M-problem $P$
there corresponds a family of functions satisfying the conditions of
the problem. Conversely, any family of functions $A$ defines some
M-problem $P(A)$.  The functions contained in the family corresponding
to an M-problem $P$ are called the \emph{solution functions} of the
M-problem $P$.

To each M-problem there corresponds a certain degree of difficulty (an
exact definition of degrees of difficulty is given below). It is
possible to define in a natural fashion conjunction, disjunction, and
other operations of propositional calculus on the degrees of
difficulty. As was established by Yu.\ T. Medvedev, the calculus of
M-problems is an interpretation of constructive propositional
calculus. This is to be expected, since the calculus of M-problems is
an elaboration of A. N. Kolmogorov's calculus of problems (see
\cite{Kolmogoroff32}). The definition of reducibility of a family of
functions (M-problems), which is basic in the calculus of M-problems,
has a constructive character.

\begin{dfnA}
  An M-problem $P(A)$ (family $A$) is \emph{reducible} to an M-problem
  $P(B)$ (family $B$), if there exists a general method of
  transformation of any solution of the M-problem $P(B)$ into a
  solution of the M-problem $P(A)$, or more accurately, if there
  exists a partial recursive operator $T$, which transforms each
  function $f$ from the family $B$ into some function $g$ (which
  depends on $f$) from the family $A$, $g=T[f]$. The reducibility of
  the family $A$ (M-problem $P(A)$ to $P(B)$) to $B$ is denoted by
  $A\le B$ ($P(A)\le P(B)$). The family of functions $A$ (the
  M-problem $P(A)$)\footnote{The definitions presented here apply
    equally well to families of functions and to the M-problems which
    they define.} is called \emph{solvable}, if it contains at least
  one general recursive function.  The M-problems (families) $A$ and
  $B$ are called \emph{equivalent} ($A\approx B$) if they are
  reducible to each other.
\end{dfnA}

The class of M-problems equivalent to an M-problem $A$ is called the
\emph{degree of difficulty} of the M-problem $A$ and is denoted by
$a=|A|$. The degrees of difficulty form a partially ordered set
$\Omega$: $|A|=a\le b=|B|$ if the M-problem $A$ is reducible to
$B$. $\Omega$ is a distributive lattice with implication and has a
largest and a smallest element (see \cite{MedvedevA}). The
investigation of M-problems, initiated by Yu.\ T. Medvedev, was
continued by the author in \cite{Muchnik56}.

\subsection*{Section 1}
\addcontentsline{toc}{subsection}{Section 1}

\setcounter{equation}{0}

1. We describe here a second approach to the concept of reducibility
of algorithmic problems, corresponding to classical, i.e.,
non-constructive, formulations.

Along with the problem of constructing an algorithm which solves a
certain problem, it is possible to consider the problem of the
existence of a required algorithm, without insisting on its concrete
form. Then each condition imposed on the arithmetical functions (i.e.,
each family of functions) will be linked to two problems:
\begin{enumerate}
\item The problem of constructing one of the functions of this family:
  the M-problem.
\item The problem of proving the existence of a general recursive
  function in this family.\footnote{It is easy to see here an analogy
    with the question of the existence of a solution of a differential
    equation and the problem of effectively finding a solution.}
\end{enumerate}

Problems of the second type will be called \emph{Ex-problems}. There
is a pairwise one-to-one correspondence between the classes of
families of functions, M-problems, and Ex-problems. The Ex-problem
corresponding to the family of functions $A$ (M-problem $P(A)$) will
be denoted by $\Pbar(A)=Q(A)$. The functions of the family defining
the Ex-problem $Q$ will accordingly be called the \emph{solution
  functions} of the Ex-problem $\Qbar$. An Ex-problem is called
\emph{solvable} if its solution functions include a general recursive
one.

An important method of establishing the solvability of an algorithmic
problem $A$ is to reduce this problem to a different problem $B$, the
solvability of which has already been established. Conversely, the
unsolvability of a problem $A$ implies the unsolvability of any
problem $B$ to which problem $A$ is reducible.

\begin{dfnA}
  The Ex-problem $\Pbar(A)$ (family of functions $A$) is
  \emph{weakly reducible} to the Ex-problem $\Pbar(B)$ (family
  $B$) ($A\vartriangleleft B$), if for any function $f$ of the family
  $B$ ($f\in B$) there exists a partial recursive operator $T$, which
  transforms the function $f$ into the function $g$ of the family $A$
  ($g\in A$).
\end{dfnA}

The choice of the function $f$ governs here not only $g$ but also the
operator $T=T_f$. In this case we say that the problem
$\Pbar(A)$ (family $A$) \emph{reduces weakly} to the problem
$\Pbar(B)$ (family $B$) by means of the operators $\{T_f\}$.

The reducibility of families of functions (problems) in the sense of
Medvedev's definition will be called henceforth \emph{strong
  reducibility} (or simply \emph{reducibility}).

Inasmuch as each of the three objects: the family of functions, the
M-problem, and the Ex-problem, defines uniquely the two others, we
shall henceforth identify these objects and call them \emph{problems}.

\medskip

2. A natural question arises concerning the relation between these
types of reducibility. It is clear that strong reducibility of a
problem $A$ to a problem $B$ implies weak reducibility of $A$ to
$B$. As shown by the example considered below, the converse is
generally not true.

Let the problem $A$ be determined by a family consisting of one
non-recursive function $f$, $A=K_f=\{f\}$, and let problem $B$ be
determined by a family consisting of all the functions obtained from
$f$ in the following manner: for each tuple of natural numbers
$\nbar=\{n_1,\ldots,n_s\}$ we consider the function
$f_{\nbar}(m)$:
\[
f_{\nbar}(m)=\left\{\begin{array}{ll}
    n_{i+1}&\hbox{for }0\le i<s,\\
    f(i-s)&\hbox{for }i\ge s,
  \end{array}\right.
\]
i.e., we ``place in front'' of the sequence of values $\{f(i)\}$ the
tuple $\nbar$:
\[
B=K_f'=\{f_{\nbar}\}.
\]
It is easy to see that the problem $K_f$ reduces weakly to the problem
$K_f'$:
\[
K_f\vartriangleleft K_f'.
\]
For any function $f_{\nbar}\in B$ there exists a partial
recursive operator (p.r.o.)\ $T$ which transforms $f_{\nbar}$
into $f$ (by ``discarding'' the first $s$ values of
$f_{\nbar}$, where $\nbar=\{n_1,n_2,\ldots,n_s\}$).

However, the problem $A$ does not strongly reduce to the problem $B$.

Let us assume the opposite, i.e., that there exists a p.r.o.\ $T$
which transforms any function $f_{\nbar}$ into $f$. Any p.r.o.\
$T$ can be specified by means of a recursive sequence of pairs of
tuples (see \cite{Muchnik56,Kuznecov})
\[
\{(d_w,d_w')\},\; w=0,1,2,\ldots.
\]
If the sequence of several first values of the function $h$ forms a
tuple $d$, then we call $d$ a tuple of the function $h$.  We shall
also say that the function $h$ begins with the tuple $d$.  If $h=T[e]$
and $d_w$ is a tuple of the function $e$, then $d_w'$ is a tuple
of the function $h$.  Inasmuch as $T[f_{d_w}]=f$ and $d_w$ is a tuple
of the function $f_{d_w}$, then $d_w'$ is a tuple of the
function $f$ (for each $w$). In view of the fact that $\{d_w'\}$
is a recursive sequence of tuples, the length of which is unlimited
(in the aggregate), the function $f$ is recursive, yet we have assumed
it to be non-recursive. This contradiction proves that the problem $A$
does not reduce strongly to $B$.

In the foregoing example, the problem $B$ was chosen somewhat
artificially. For algorithmic problems which are usually considered in
the theory of algorithms and its applications, the situation is
different. If we confine ourselves to reducibility (strong and weak)
by means of general recursive operators\footnote{A general
  recursive operator is a p.r.o.\ which transforms functions which are
  everywhere defined (on $\NN$) into functions which are everywhere
  defined.} or even partial recursive operators applicable to each
solution function of the problem to which we reduce another problem,
then both types of reducibility are equivalent for a broad class of
problems.  We shall return to this question in Section 2, and consider
here in greater detail the calculus that results from the definition
of weak reducibility of problems.

\medskip

3. If problems $A$ and $B$ reduce weakly to each other, we shall call
them \emph{weakly equivalent}: $A\wequiv B$. This relation is
transitive, symmetrical, and reflexive. The class of all problems
therefore breaks up into classes of weakly equivalent problems.  The
class of problems which are weakly equivalent to $A$ will be called
the \emph{weak degree of difficulty} of problem $A$.  The weak degree
of difficulty of the problem $A$ characterizes the problem of proving
the existence (in the classical sense) of a computable solution
function of the problem $A$.

A weak degree of difficulty $\bbar$ exceeds $\abar$,
$\bbar\ge\abar$ or $\abar\le\bbar$, if the
problem $A$ reduces weakly to the problem $B$
($\abar=|\Abar|$, $\bbar=|\Bbar|$).  We
denote by $\Omegabar$ the partially ordered set of weak
degrees of difficulty.

Between $\Omega$ and $\Omegabar$ there is a one-sidedly univalent
correspondence $\Omega\to\Omegabar$; to each degree of difficulty
$a\in\Omega$ there corresponds a weak degree of difficulty $\abar$:
$\abar$ is the weak degree of difficulty of a problem $A$ with degree
of difficulty $a$. The correspondence $a\to\abar$ does not depend on
the choice of the problem $A$, since equivalence of problems implies
weak equivalence of problems. This relation is isotopic, since
reducibility of problems implies weak reducibility. The solvable
(smallest) degree $0$ from $\Omega$ corresponds to the solvable weak
degree $\overline{0}$ from $\Omegabar$, and the improper (largest,
i.e., defined by the empty class of functions) degree $\infty$ from
$\Omega$ corresponds to an equal degree from $\Omegabar$. We shall
prove that $\Omegabar$ is a lattice and the indicated correspondence
is a lattice homomorphism.

We note that $\Omegabar$ admits a natural topological
interpretation.  Define a \emph{complete} family of functions or
points of Baire space (complete problem) to be any family (problem)
$A$ having the following property: together with each function $f$
belonging to $A$, the family $A$ contains any function $g$ with
respect to which the function $f$ is recursive.

We shall establish some properties of complete families. The union and
intersection of any number of complete families (finite or infinite)
are also complete families.

Let $B$ be some family of functions. The family consisting of all
functions $\{g\}$, for each of which there exists a certain function
$f$ from $B$, which is recursive with respect to this function $g$
will be called the \emph{completion} $B'$ of the family $B$.  It
is obvious that $B'$ is the smallest complete family containing
the family $B$, and the completion of a complete family $A$ coincides
with $A$: $A'=A$. The family $B'$ is weakly equivalent to
$B$. It is sufficient to establish that $B'\trianglelefteq B$,
since $B'\supset B$, from which follows $B\trianglelefteq
B'$. Indeed, for any function $g\in B'$ there exists a
p.r.o.\ $T$ such that $T[g]=f\in B$.

Completions of two weakly equivalent families $A$ and $B$ coincide:
$A\wequiv B\,\rightarrow\,A'=B'$. Let $g\in
A'$. Then there exists a function $f\in A$ and a p.r.o.\ $T_1$
such that $T_1[g]=f$. In view of $A\wequiv B$, there exists a p.r.o.\
$T$ such that $T[f]=h\in B$. Then $T_2[g]=T[T_1[g]]=h\in B$.
Therefore $g\in B'$. Conversely, if $g\in B'$, then $g\in
A'$. Thus $A'=B'$.

In view of the foregoing, any weak degree $\abar$ defines
uniquely a complete family (problem) $A$, which we shall call the
\emph{representative} of $\abar$.

\begin{lem*A}
  Let $\abar$ and $\bbar$ be weak degrees of complete
  families $A$ and $B$ respectively. Then
  $\abar\ge\bbar\leftrightarrow A\subset B$
  \footnote{$\frakA\leftrightarrow\frakB$ denotes that the statement
    $\frakA$ is equivalent to statement $\frakB$.}, or using a
  different notation
  \begin{equation}
    \Abar\trianglerighteq\Bbar\leftrightarrow A\subset B.
  \end{equation}
\end{lem*A}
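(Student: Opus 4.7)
The plan is to derive both directions of the biconditional directly from the definitions, using the single observation that a complete family is upward closed under Turing reducibility: if $g\in A$ and $g\leT f$, then $f\in A$. This reformulation of completeness is the only nontrivial ingredient, and once it is in hand both directions reduce to a single line.

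For the forward direction, I would assume $\Abar\trianglerighteq\Bbar$, which unpacks to $B\vartriangleleft A$: for every $f\in A$ there is a partial recursive operator $T_f$ with $T_f[f]=g\in B$. Because $T_f$ is partial recursive, $g\leT f$; applying the upward closure of $B$ to the pair $g\in B$, $g\leT f$ yields $f\in B$. As $f\in A$ was arbitrary, this gives $A\subset B$. For the backward direction, starting from $A\subset B$ I would simply use the identity partial recursive operator, which maps each $f\in A\subset B$ into $B$ and so witnesses $B\vartriangleleft A$, i.e., $\Abar\trianglerighteq\Bbar$.

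There is essentially no obstacle; the only place where it is easy to slip is in the direction of Turing reducibility built into completeness — $A$ is closed \emph{upward} in the Turing ordering, meaning it contains every function that is at least as hard as some element of $A$, not the other way around. Once that is kept straight, the proof is a direct unpacking of definitions, with the partial-recursive nature of $T_f$ doing all the work in the nontrivial direction.
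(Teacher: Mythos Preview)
Your proof is correct and follows essentially the same approach as the paper: for the forward direction the paper takes $g\in A$, uses weak reducibility to obtain $T[g]=f\in B$, and invokes completeness of $B$ to conclude $g\in B$; for the backward direction it simply declares the implication obvious, which amounts to your identity-operator argument. Your explicit remark that $T_f[f]\leT f$ is the key step hidden behind the paper's appeal to ``completeness of $B$'' is exactly right.
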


Let $g\in A$. Then there exists a p.r.o.\ $T$ such that $T[g]=f\in
B$. In view of the completeness of the family $B$, $g\in B$. The
relation $A\subset B\rightarrow A\trianglerighteq B$ is obvious.

We now readily prove some theorems concerning the properties of
$\Omegabar$.

\begin{thmA}\label{thm:-1}
  For any set of weak degrees $\{\abar_\xi\}$ there exist exact
  upper and lower bounds, denoted by $\bigvee\abar_\xi$ and
  $\bigwedge\abar_\xi$, respectively.
\end{thmA}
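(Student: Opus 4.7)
The plan is to exploit the correspondence between weak degrees and complete families established just above the theorem: every weak degree has a unique complete representative, the union and intersection of any collection of complete families is again complete, and by the Lemma the poset $\Omegabar$ is antitonically isomorphic to the poset of complete families ordered by inclusion. Under this dictionary, suprema in $\Omegabar$ should correspond to \emph{intersections} of complete representatives, and infima to \emph{unions}.

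Concretely, given a set $\{\abar_\xi\}$ of weak degrees, I would let $A_\xi$ be the complete family representing $\abar_\xi$ and define
\[
  \ccc=|\textstyle\bigcap_\xi A_\xi|,\qquad \ddd=|\bigcup_\xi A_\xi|,
\]
which is legitimate because intersections and unions of complete families were shown to be complete. I would then verify that $\ccc=\bigvee\abar_\xi$ and $\ddd=\bigwedge\abar_\xi$. For $\ccc$: since $\bigcap_\xi A_\xi\subseteq A_\xi$ for every $\xi$, the Lemma gives $\ccc\ge\abar_\xi$, so $\ccc$ is an upper bound. If $\bbar$ is any other upper bound with complete representative $B$, then $\bbar\ge\abar_\xi$ yields $B\subseteq A_\xi$ for every $\xi$, hence $B\subseteq\bigcap_\xi A_\xi$, which by the Lemma means $\bbar\ge\ccc$. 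The argument for $\ddd$ as the greatest lower bound is dual, using that $A_\xi\subseteq\bigcup_\xi A_\xi$ gives $\ddd\le\abar_\xi$, and any common lower bound, with complete representative $B$, satisfies $A_\xi\subseteq B$ for every $\xi$ and therefore $\bigcup_\xi A_\xi\subseteq B$.

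There is really no serious obstacle: once the Lemma and the closure of complete families under arbitrary unions and intersections are in hand, the theorem is a one-step application of the order-reversing bijection between $\Omegabar$ and the lattice of complete families. The only point that deserves a brief explicit check is that the relevant complete families are indeed closed under \emph{arbitrary} (not just finite) unions and intersections — but this was asserted in the preceding paragraph, so I would simply cite it rather than re-prove it in the statement of the theorem.
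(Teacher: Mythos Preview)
Your proposal is correct and matches the paper's own proof essentially line for line: the paper likewise takes the complete representatives $A_\xi$, forms $\bigcup_\xi A_\xi$ and $\bigcap_\xi A_\xi$, and uses the Lemma to verify that these give the infimum and supremum respectively in $\Omegabar$. The only cosmetic difference is that the paper treats the union case first and the intersection case second, whereas you present them in the opposite order.
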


Let $A_\xi$ be a complete family with weak degree of difficulty
$\abar_\xi$ and $A=\displaystyle\bigcup_\xi\Abar_\xi$,
$\abar=|\Abar|$. We shall prove that
$\abar=\inf\{\abar_\xi\}$.  Obviously $A$ is a complete
family and $\abar\le\abar_\xi$ for any $\xi$.  Further,
let $\bbar\le\abar_\xi$ for any $\xi$ and $B$ a complete
family, $\bbar=|\Bbar|$.  Then $B\supset A_\xi$ and
$B\supset A$, hence $\bbar\le\abar$. We put
$A^*=\displaystyle\bigcap_\xi A_\xi$. Obviously,
$\abar^*\ge\abar_\xi$ for any $\xi$. In addition, if
$\bbar\ge\abar_\xi$ for all $\xi$, and $B$ is a
representative of $\bbar$, then $B\subset A_\xi$ and
$B\subset\displaystyle\bigcap_\xi A_\xi=A^*$, i.e.,
$\bbar\ge\abar^*$. Hence
$\abar^*=\sup\{\abar_\xi\}$.

From the proof of Theorem \ref{thm:-1}, we see that the operations of
taking the exact upper and lower bounds in $\Omegabar$
correspond to the operations of intersection and union of complete
families of functions.

$\Omegabar$ is a complete lattice, represented by subsets of
the Baire space $J$. In the function space $J$ it is possible to
introduce a topology by assigning as open sets the complete families
of functions. This will be a $T_0$ space. (On this subject see
G. D. Birkhoff, Lattice theory, Russian translation of the 2nd
edition, IL 1952, Chapter IV, \S\S\ 1 and 2.)

\begin{thmA}\label{thm:-2}
  Let $A$ and $B$ be arbitrary problems, $a=|A|,
  \abar=|\Bbar|, b=|B|,
  \bbar=|\Bbar|$. Then the problem $C=A\cup B$
  with degree of difficulty $c=a\lor b$ has a weak degree
  $\cbar=\abar\lor\bbar$, and the problem $D$
  with degree of difficulty $d=a\land b$ has a weak degree
  $\dbar=\abar\land\bbar$.
\end{thmA}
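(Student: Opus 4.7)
The plan is to reduce both assertions to Theorem \ref{thm:-1} by passing from the families $A$ and $B$ to their unique complete representatives $A'$ and $B'$. Recall from the discussion preceding Theorem \ref{thm:-1} that every family $X$ satisfies $X \wequiv X'$, and from the proof of Theorem \ref{thm:-1} that the infimum $\abar \lor \bbar$ in $\Omegabar$ is realized by $A' \cup B'$ while the supremum $\abar \land \bbar$ is realized by $A' \cap B'$. Thus the whole task becomes to verify that the Medvedev-level operations $\cup$ and conjunction commute with completion, and then to invoke weak equivalence.

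For the claim about $C = A \cup B$, note that $c = a \lor b$ is the infimum in $\Omega$ (a solution of $A \cup B$ is a solution of $A$ or of $B$, so $A \cup B$ is strongly reducible below each of $A$ and $B$), so it suffices to check $(A \cup B)' = A' \cup B'$. The inclusion $A' \cup B' \subseteq (A \cup B)'$ follows from monotonicity of completion applied to $A, B \subseteq A \cup B$. For the converse, each $g \in (A \cup B)'$ admits some $f \in A \cup B$ with $f \leT g$; this $f$ lies in $A$ or in $B$, placing $g$ in $A'$ or in $B'$ accordingly. Combining with $A \cup B \wequiv (A \cup B)'$ yields $\cbar = \abar \lor \bbar$.

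For the claim about $D$, I would take the Medvedev conjunction $D = \{f \oplus g \mid f \in A,\ g \in B\}$, where $f \oplus g$ is the standard effective join with $(f \oplus g)(2n) = f(n)$ and $(f \oplus g)(2n+1) = g(n)$; this has Medvedev degree $d = a \land b$ (the supremum in $\Omega$). The key reducibility fact is that $f \oplus g \leT h$ is equivalent to the conjunction of $f \leT h$ and $g \leT h$. Consequently $h \in (A \land B)'$ iff there exist $f \in A$ and $g \in B$ with $f \oplus g \leT h$, iff $h \in A'$ and $h \in B'$. So $(A \land B)' = A' \cap B'$, and together with $D \wequiv D'$ this gives $\dbar = \abar \land \bbar$.

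There is no real obstacle here: once Theorem \ref{thm:-1} is in place, the entire argument collapses to the two set-theoretic identities $(A \cup B)' = A' \cup B'$ and $(A \land B)' = A' \cap B'$. The only bookkeeping is selecting an explicit realization of the Medvedev conjunction whose completion can be identified with $A' \cap B'$; the function-join encoding serves this purpose immediately, since it converts the condition $f \oplus g \leT h$ into the separate conditions $f \leT h$ and $g \leT h$.
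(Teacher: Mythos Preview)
Your argument for the conjunction is essentially the paper's. The paper takes the same $D=\{R[f,g]:f\in A,\ g\in B\}$ (your $f\oplus g$), introduces the auxiliary family $D_1$ of all $e$ that compute some member of $A$ and some member of $B$ --- which is exactly $A'\cap B'$ --- and verifies the two directions of $D\wequiv D_1$. That is precisely your identity $D'=A'\cap B'$, just phrased as a weak equivalence rather than as a computation of the completion.

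For the disjunction there is a slip. You take the equation $c=a\lor b$ at face value for the plain union $C=A\cup B$, but in the Medvedev lattice the plain union is only a lower bound of $a$ and $b$, not the infimum: a single reducing operator applied to an element of $A\cup B$ has no uniform way to tell whether its input lies in $A$ or in $B$. The Medvedev representative of $a\lor b$ is the \emph{tagged} union $\{R_0[f]:f\in A\}\cup\{R_1[g]:g\in B\}$, and the paper's proof for this half consists exactly in checking that the tagged family $C$ is weakly equivalent to the plain union $C_1=A\cup B$ (strip or prepend the tag bit). Your computation $(A\cup B)'=A'\cup B'$ then finishes the argument --- and indeed the paper simply asserts $|\Cbar_1|=\abar\lor\bbar$ at that point, leaving the step you wrote out implicit. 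So your proof becomes complete once you replace the parenthetical claim that $|A\cup B|=a\lor b$ in $\Omega$ by the one-line weak equivalence between the tagged and plain unions; as written you have established $|\overline{A\cup B}|=\abar\lor\bbar$ but not yet $\overline{a\lor b}=\abar\lor\bbar$, which is the content of the homomorphism statement.
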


Following Yu.\ T. Medvedev \cite{MedvedevA}, we choose problems $C$
and $D$ in the following fashion. We define the p.r.o.s $R_0$, $R_1$
and a two-place p.r.o.\ $R$:
\[
\begin{array}{rcl}
  f_i(n)&=&R_i[f],\\[6pt]
  f_i(n)&=&\left\{\begin{array}{ll}
      i&\hbox{for }n=0\\
      f(n-1)&\hbox{for }n>0
    \end{array}\right\} (i=0,1),\\[12pt]
  h(n)&=&R[f(m),g(m)],\\[6pt]
  h(n)&=&\left\{\begin{array}{ll}
      f(m)&\hbox{for }n=2m,\\
      g(m)&\hbox{for }n=2m+1.
    \end{array}\right.
\end{array}
\]
The problem $C$ consists of all the functions $f_0(n)=R_0[f]$, where
$f\in A$, and all the functions $g_1(n)=R_1[g]$, where $g\in B$, and
$|C|=c=a\lor b$.
The problem $C_1$ consists of all the solution functions of problems
$A$ and $B$,
\[
|\cbar_1|=\cbar=\abar\lor\bbar.
\]
We shall prove that $C$ and $C_1$ are weakly equivalent, i.e.,
$|\Cbar_1|=|\Cbar|$. Indeed, each
function $h\in C_1$ can be transformed with the aid of $R_0$ or $R_1$
into a function $h_i\in C$, and each function $h_i\in C$ can be
reduced by means of an inverse transformation into $h\in C_1$ (i.e.,
$C_1$ reduces even strongly to $C$).

Further, the problem $D$ consists of all the functions $h=R[f,g]$,
where $f$ runs through class $A$ and $g$ through class $B$.  The
problem $D_1$ consists of all the functions $e$ such that problems $A$
and $B$ reduce to the problem of computability $A_e=\{e\}$, i.e., for
each function $e$ there exists p.r.o.\ $T_1$ and $T_2$ such that
$T_1[e]\in A, T_2[e]\in B$. We shall prove that problems $D$ and $D_1$
are weakly equivalent:
\begin{enumerate}
\item $D_1\trianglelefteq D$ (even $D_1\le D$). The relation $D_1\le
  D$ follows from the fact that class $D$ is contained in $D_1$, since
  any function $h\in D$ can be transformed with the aid of the p.r.o.\
  $T_1$ ($T_2$) into the function $f(g)$, $f\in A$ ($g\in B$). To this
  end it is sufficient to put
  \[
  \begin{array}{l}
    T_1[h]=f(m)=h(2m),\\
    T_2[h]=g(m)=h(2m+1).
  \end{array}
  \]
\item $D\trianglelefteq D_1$. Let the function $e\in D_1$. Then there
  exist p.r.o.\ $T_1$ and $T_2$ such that
  \[
  f=T_1[e]\in A,\,g=T_2[e]\in B,\,R[f,g]=h\in D
  \]
  and
  \[
  h=T[e]=R\left[T_1[e],T_2[e]\right]\in D.
  \]
  The p.r.o.\ $T$ transforms the function $e$ into $h\in D$, from
  which it follows that $D\trianglelefteq D_1$.
\end{enumerate}

The weak degree $\cbar=\abar\lor\bbar$ will be called the
\emph{disjunction}, and $\dbar=\abar\land\bbar$ will be called the
\emph{conjunction}, of the weak degrees $\abar$ and $\bbar$. Let us
prove that the lattice $\Omegabar$ has an implication operator:

\begin{thmA}\label{thm:-3}
  For any weak degrees $\abar$ and $\bbar$ there exists a smallest
  degree $\cbar^*$ in the class of weak degrees $\cbar$ such that
  $\abar\land\cbar\ge\bbar$.
\end{thmA}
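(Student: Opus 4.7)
The plan is to work with complete representatives and reduce the question to a lattice-theoretic problem about Turing-upward-closed subsets of Baire space. Let $A$ and $B$ be the complete families representing $\abar$ and $\bbar$, and let $C$ be a complete representative of an arbitrary weak degree $\cbar$. From the proof of Theorem~\ref{thm:-2}, for complete families the conjunction $\abar \wedge \cbar$ is represented by $A \cap C$ (for complete $A$, the condition ``$e$ computes some $f\in A$'' reduces to $e\in A$, so the problem $D_1$ becomes $A\cap C$). Hence, by the Lemma, the inequality $\abar \wedge \cbar \ge \bbar$ is equivalent to the set-inclusion $A \cap C \subset B$. Since the correspondence from weak degrees to complete representatives reverses inclusion, locating the smallest $\cbar^*$ with $\abar \wedge \cbar^* \ge \bbar$ is the same as locating the largest complete family $C^*$ satisfying $A \cap C^* \subset B$.

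I would then define
\[
  C^* = \{g \in \NN^\NN : \{h \in \NN^\NN : g \leT h\} \cap A \subset B\},
\]
that is, the set of functions $g$ such that every $h$ with respect to which $g$ is recursive and which belongs to $A$ also belongs to $B$; equivalently, $C^*$ is the largest Turing-upward-closed subset of the complement of $A\setminus B$. The verification then breaks into three short arguments. First, $C^*$ is complete: if $g \in C^*$ and $g \leT g'$, then $\{h:g' \leT h\} \subset \{h:g \leT h\}$, so $\{h:g' \leT h\} \cap A \subset B$ and $g' \in C^*$. Second, $A \cap C^* \subset B$: for $g \in A \cap C^*$ the trivial relation $g \leT g$ together with $g \in A$ forces $g \in B$ by the definition of $C^*$. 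Third, maximality: if $D$ is any complete family with $A \cap D \subset B$ and $g \in D$, then every $h$ with $g \leT h$ lies in $D$ by completeness of $D$, so $h \in A$ forces $h \in A \cap D \subset B$, giving $g \in C^*$ and $D \subset C^*$. Setting $\cbar^* = |C^*|$ then completes the proof.

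The main conceptual obstacle is purely orientation. Muchnik's conjunction $\wedge$ is the supremum (join) in the weak-degree order, because solving both of two problems is harder than solving either, and the Lemma reverses set inclusion; consequently ``smallest $\cbar^*$'' corresponds to ``largest $C^*$'', and Theorem~\ref{thm:-3} is essentially the assertion that the Heyting implication exists in the dual order on complete families. Once this orientation is pinned down, the construction of $C^*$ as the largest upward-closed set disjoint from $A \setminus B$ is forced, and the three verifications above are routine.
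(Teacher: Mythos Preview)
Your proof is correct and follows essentially the same approach as the paper: both take $A$ and $B$ to be the complete representatives of $\abar$ and $\bbar$, define $C^*$ as the set of all $g$ for which combining $g$ with any $f\in A$ yields something in $B$ (the paper phrases this via a p.r.o.\ applied to the pair $[f,g]$, which for complete $A,B$ is exactly your upward-closure condition), and then verify $\abar\land\cbar^*\ge\bbar$ together with $C\subset C^*$ for any competitor $C$. The only cosmetic difference is that you route everything through the Lemma and the identification of $\land$ with intersection of complete representatives, whereas the paper invokes the explicit pairing construction $R[f,g]$ from Theorem~\ref{thm:-2}; the resulting $C^*$ and the verifications coincide.
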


\emph{Proof.}  We consider the representatives of the weak degrees
$\abar$ and $\bbar$, i.e., the complete families (problems) $A$ and
$B$, $|\Abar|=\abar$, $|\Bbar|=\bbar$.  We denote by $C^*$ the family
of all the functions $\{g\}$ such that for each pair of functions
$[f,g]$, where $f\in A$ and $g\in C^*$, there exist a p.r.o.\ $T$
which transforms the pair $[f,g]$ into a function $e\in B$,
$e=T[f,g]$. It is obvious that the family $C^*$ includes the family
$B$ and that $\abar\land\cbar^*\ge\bbar$ where
$\cbar^*=|\Cbar^*|$. Let us prove that the problem $C^*$ reduces
weakly to any problem $C$ such that $\abar\land\cbar\ge\bbar$ where
$\cbar=|\Cbar|$. Let $C$ be such a problem and $g$ an arbitrary
function from $C$. As follows from Theorem \ref{thm:-2}, the problem
$D$, which consists of all of the functions $h=R[f,g]$ where $f$ runs
through the family $A$ and $g$ through the family $C$, has the weak
degree $\dbar=\abar\land\cbar$.  Inasmuch as $\dbar\ge\bbar$, i.e.,
$|D|\trianglelefteq|B|$, for any function $h\in D$, there exists a
p.r.o.\ $T_1$ such that $e=T_1[h]\in B$. This means that for any pair
of functions $[f,g]$ where $f\in A$ and $g\in C$, there exists a
two-place p.r.o.\ $T=T_1R$ such that $e=T[f,g]=T_1[R[f,g]]\in B$. By
definition of $C^*$, the function $g\in C^*$. It follows therefore
that $C\subset C^*$ and $|C^*|\trianglelefteq|C|$.  This completes the
proof.

We shall call $C^*$ the \emph{weak problem of reducibility} of the
problem $B$ to the problem $A$, and $\cbar^*$ the implication, denoted
by $\abar\supset\bbar$. Obviously $C^*$ is a complete family, i.e.,
the representative of $\cbar^*$.

We note that implication, generally speaking, is not conserved in
homomorphism of the lattices $\Omega\to\Omegabar$.  Indeed, in the
example discussed in Section 2, the problems $A$ and $B$ ($A=K_f$,
$B=K_f'$) were related by $|A|>|B|$ and $|\Abar|\wequiv|\Bbar|$, or
$a>b$ and $\abar=\bbar$.  Therefore the implication
$\bbar\supset\abar$ is the solvable (trivial) weak degree (i.e., the
degree of a solvable problem), and $b\supset a$ is an unsolvable
degree and $\overline{b\supset a}\ne\bbar\supset\abar$.

Note that solvability of the weak degree $\abar\supset\bbar$ is
equivalent to the relation $\abar\ge\bbar$.  The proof of this is
simple and will be omitted.  Further consideration of this point is
analogous to that of Yu.\ T. Medvedev with respect to the calculus of
$\Omega$.

We consider an arbitrary segment $\Omegabar:0\le x\le\dbar$.  The weak
degree $\lnot x=x\supset d$ is called the \emph{negation} of the weak
degree $x$ (with respect to $\dbar$).  We introduce also the notation
$\abar\sim\bbar$ for the degree
$(\abar\supset\bbar)\cap(\bbar\supset\abar)$.

The thought arises of the connection between the calculus of weak
degrees $\Omegabar$ and the propositional calculus: elementary
propositions can be interpreted as weak degrees, and the operations of
propositional calculus correspond to like operations of the calculus
of weak degrees. The truth of a formula corresponds to the solvability
of a weak degree.

\begin{thmA}\label{thm:-4}
  All the axioms and rules of derivation of intuitionistic
  propositional calculus are satisfied for weak degrees of an
  arbitrary segment $0\le x\le\dbar$ in $\Omegabar$.
\end{thmA}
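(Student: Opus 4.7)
The plan is to show that the segment $0 \le x \le \dbar$ of $\Omegabar$, equipped with the operations $\vee$, $\wedge$, $\supset$ from Theorems \ref{thm:-1}--\ref{thm:-3} and the negation $\lnot x = x \supset \dbar$, carries the structure of a Heyting algebra in which ``truth'' is the bottom element $\overline{0}$ and ``falsity'' is $\dbar$. Validity of every axiom and rule of intuitionistic propositional calculus will then follow from Tarski's well-known soundness theorem for Heyting semantics.

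The algebraic ingredients are already present in Theorems \ref{thm:-1}--\ref{thm:-3}: the first gives that $\Omegabar$ is a complete lattice, the second exhibits the binary disjunction and conjunction, and the third produces the implication $\abar \supset \bbar$ satisfying the crucial residuation adjunction
\[
\cbar \ge \abar \supset \bbar \quad \liff \quad \abar \wedge \cbar \ge \bbar.
\]
This adjunction is precisely the defining property of a Heyting implication, once $\Omegabar$ is viewed in the dual order (the ``truth order'' in which $\overline{0}$ sits at the top).

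Next I would verify that the segment $[0, \dbar]$ is closed under $\vee$, $\wedge$, and $\supset$. Closure under disjunction and conjunction is immediate from monotonicity and the bounds. For implication, substituting $\cbar = \dbar$ into the residuation gives $\dbar \ge \abar \supset \bbar$ iff $\abar \wedge \dbar \ge \bbar$; since $\abar, \bbar \le \dbar$ and $\wedge$ is conjunction, we have $\abar \wedge \dbar = \dbar \ge \bbar$, so $\abar \supset \bbar \le \dbar$. In particular the negation $\lnot x$ lies in the segment. Thus the segment inherits a Heyting structure with top $\overline{0}$ and bottom $\dbar$ in the truth order.

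With the Heyting structure in hand, I would conclude by interpreting propositional variables as weak degrees in $[0, \dbar]$, the connectives as the corresponding operations, and declaring a formula ``satisfied'' if it evaluates to $\overline{0}$. A routine induction shows that each Hilbert-style axiom of intuitionistic propositional calculus evaluates to $\overline{0}$ and that modus ponens preserves this property. The only real subtlety in the whole argument is keeping the order conventions straight (since truth sits at the bottom of $\Omegabar$ rather than at the top as in standard Heyting algebra presentations); once that is done there is no serious algebraic obstacle, all of the substantial work having been accomplished in Theorems \ref{thm:-1}--\ref{thm:-3}.
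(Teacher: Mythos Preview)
Your proposal is correct and follows essentially the same route as the paper. The paper's proof is a single sentence: it observes that $\Omegabar$ is a distributive lattice with implication (a Brouwerian lattice in Birkhoff's terminology) and cites Birkhoff, \emph{Lattice Theory}, 2nd ed., Chapter~XII, \S7, for the general fact that such a structure validates intuitionistic propositional calculus. Your argument unpacks exactly this, explicitly verifying the residuation law from Theorem~\ref{thm:-3}, checking closure of the segment under $\supset$, and then invoking the standard Heyting-algebra soundness theorem; your remark about the dual-order convention (truth $=\overline{0}$ at the bottom) is the only point requiring care, and you handle it correctly.
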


Theorem \ref{thm:-4} follows from the existence of implication in the
distributive lattice $\Omega$ (see Birkhoff, Lattice theory, Russian
translation of the 2nd edition, Chapter XII, \S 7).

Let us discuss the consequences of this point. In spite of the fact
that the definition of weak reducibility of problems has been chosen
in accordance with classical premises, the calculus of weak degrees
obtained thereby is an interpretation of constructive propositional
calculus and does not include, for example, the law of the excluded
third.

However, this should not surprise us, since the calculus of weak
degrees $\Omegabar$, like that of $\Omega$, is a refinement of
Kolmogorov's calculus of problems.

The question whether the weak degrees $\Omegabar$ are an
\emph{exact}\footnote{An interpretation of a logical calculus $K$ is
  called exact if all formulas true (solvable, realizable) in the
  interpretation are derivable in the calculus $K$.} interpretation of
constructive propositional calculus remains open.  We note that the
calculus of degrees of difficulty $\Omega$, as shown recently by Yu.\
T. Medvedev, is an exact interpretation of the constructive calculus.

\subsection*{Section 2}
\addcontentsline{toc}{subsection}{Section 2}

\setcounter{equation}{0} 

In this section we analyze the question of the relation between strong
and weak reducibility under certain limitations on the p.r.o.s by
means of which the reducibility is realized, and on the problems
themselves.

We need several new concepts. In the arguments that follow we shall
find it convenient to use the Baire space $J$.

Arithmetical functions can be interpreted as points in Baire space
(considering the sequence of the values of these functions
\cite{Aleksandrov,Kuznecov}). To each problem $A$ in such an
interpretation, there corresponds a certain set of points $\frakM_A$
of the Baire space, which defines it completely.

Let $\delta_{\nbar}$ be a Baire interval, defined by a tuple
$\nbar=(n_1,n_2,\ldots,n_s)$. The problem which is defined by the set
of points $\frakM_A\cap\delta_{\nbar}$ shall be called the
\emph{interval} $A_{\nbar}$ of the problem $A$. In other words,
$A_{\nbar}$ is defined by the class of solution functions of the
problem $A$ beginning with the tuple $\nbar$. The interval $A_{\nbar}$
is called non-empty if the set $\frakM_A\cap\delta_{\nbar}$ is
non-empty.  A problem $A$ is called \emph{uniform} if any of its
non-empty intervals is (strongly) reducible to it.

The \emph{problem of solvability} $A_E$ of the set $E$ is defined by
the class $K_A(E)$, consisting of one characteristic function of the
set $E$. The \emph{problem of enumerability} $C(E)$ is determined by
the class $K_C(E)=\{f(n)\}$ of the functions that enumerate the set
$E$, i.e., the set $E$ is the image of the function $f(n)$.  The
\emph{problem of separability} $A_{E_0E_1}$ of the sets $E_0$ and
$E_1$ with empty intersection is determined by the class of functions
$\{f(n)\}$ satisfying the condition
\begin{equation}\label{eqn:-2.1}
  f(k)=\left\{\begin{array}{cl}
      0&\hbox{for }k\in E_0,\\
      1&\hbox{for }k\in E_1,\\
      0\hbox{ or }1&\hbox{for }k\notin E_0\cup E_1
    \end{array}\right.
\end{equation}

\begin{thmA}\label{thm:-5}
  The problem of enumerability of any non-empty set $E$ is uniform.
\end{thmA}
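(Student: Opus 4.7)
The plan is to exhibit, for each non-empty interval $C(E)_{\nbar}$, an explicit partial recursive operator reducing $C(E)_{\nbar}$ to $C(E)$. Fix a tuple $\nbar=(n_1,\ldots,n_s)$ such that the interval $C(E)_{\nbar}$ is non-empty, meaning there is at least one enumeration of $E$ that begins with $\nbar$. First I would observe that any function in $K_C(E)$ takes values only in $E$, so non-emptiness of $C(E)_{\nbar}$ forces $\{n_1,\ldots,n_s\}\subseteq E$.

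Next I would define the prepending operator $T=T_{\nbar}$ by
\[
T[g](k)=\begin{cases}n_{k+1}&\text{if }0\le k<s,\\ g(k-s)&\text{if }k\ge s,\end{cases}
\]
which is clearly a (general) partial recursive operator. For any $g\in K_C(E)$, the function $h=T[g]$ begins with the tuple $\nbar$ by construction, and its range is $\{n_1,\ldots,n_s\}\cup\rng(g)=\{n_1,\ldots,n_s\}\cup E=E$, using $\{n_1,\ldots,n_s\}\subseteq E$. Hence $h\in K_C(E)_{\nbar}$, so $T$ witnesses $C(E)_{\nbar}\le C(E)$.

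Since $\nbar$ was an arbitrary tuple defining a non-empty interval, every non-empty interval of $C(E)$ reduces strongly to $C(E)$, which is precisely the definition of uniformity. There is no genuine obstacle: the key point is simply that an enumeration of $E$ remains an enumeration of $E$ when one prepends finitely many values already belonging to $E$, and this prepending is effective and uniform in $\nbar$.
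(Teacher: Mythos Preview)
Your proof is correct and follows essentially the same approach as the paper: both reduce the interval $C(E)_{\nbar}$ to $C(E)$ via the operator that prepends the tuple $\nbar$ to an arbitrary enumeration of $E$. You spell out a bit more detail (that non-emptiness of the interval forces $\{n_1,\ldots,n_s\}\subseteq E$ and hence the range is preserved), but the argument is the same.
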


Indeed, let $C$ be the problem of enumerability of the set $E$ and let
$C_{\nbar}$ be a non-empty interval in it:
$\nbar=(n_1,n_2,\ldots,n_s)$.  Thus $K_{C_{\nbar}}$ consists of all
the functions which enumerate the set $E$ and begin with the tuple
$\nbar$.  The problem $C_{\nbar}$ is (strongly) reducible to the
problem $C$ by means of the p.r.o.\ $T$ which, being applied to any
function $f$, shifts the sequence of its values by first adding the
tuple $\nbar$.

\begin{thmA}\label{thm:-6}
  The problem of separability $A_{E_0E_1}$ is uniform for arbitrary
  $E_0, E_1$ ($E_0\cap E_1=\Lambda$, where $\Lambda$ is the empty
  set).
\end{thmA}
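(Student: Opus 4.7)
The plan is to establish uniformity of $A_{E_0E_1}$ by exhibiting, for an arbitrary non-empty interval, a single partial recursive operator that witnesses the strong reducibility. Since the argument closely parallels the proof of Theorem \ref{thm:-5}, I expect the proof to be short and the only point requiring care is that the replacement function we build actually satisfies the separability conditions (2.1) everywhere, not merely on the prefix $\nbar$.

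First I would fix a tuple $\nbar=(n_1,\ldots,n_s)$ such that the interval $A_{\nbar}$ is non-empty. Because $A_{\nbar}$ contains at least one solution function of the separability problem, the entries of $\nbar$ must already be consistent with (2.1) on the prefix; that is, for each $i$ with $0\le i<s$ we have $n_{i+1}\in\{0,1\}$, together with $n_{i+1}=0$ whenever $i\in E_0$ and $n_{i+1}=1$ whenever $i\in E_1$. This consistency observation, which uses nothing beyond the definition of the interval, is what allows the rest of the construction to go through.

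Next I would define the p.r.o.\ $T$ that realizes the reduction $A_{\nbar}\le A_{E_0E_1}$. Given any input $g\in J$, set
\[
T[g](k)=\begin{cases} n_{k+1} & \text{if } 0\le k<s,\\ g(k) & \text{if } k\ge s.\end{cases}
\]
This is plainly a (total) recursive operator; in the tuple-pair description of p.r.o.s used earlier in the paper, it is given by pairs $(d,d')$ where $d$ ranges over initial segments of length $\ge s$ of arbitrary functions and $d'$ is obtained from $d$ by overwriting its first $s$ entries with $n_1,\ldots,n_s$.

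Finally I would check that $T$ carries every $g\in K_{A_{E_0E_1}}$ into $K_{A_{\nbar}}$. The output $T[g]$ begins with the tuple $\nbar$ by construction, so it lies in the interval $\delta_{\nbar}$. To see that it still satisfies (2.1): on indices $k<s$ the first observation shows that $n_{k+1}$ respects the membership constraints from $E_0$ and $E_1$; on indices $k\ge s$ the value equals $g(k)$, and $g$ itself satisfies (2.1). Hence $T[g]\in K_{A_{\nbar}}$, which gives $A_{\nbar}\le A_{E_0E_1}$. Since $\nbar$ was an arbitrary tuple defining a non-empty interval, $A_{E_0E_1}$ is uniform.
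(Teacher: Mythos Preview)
Your proposal is correct and follows essentially the same approach as the paper: observe that non-emptiness of the interval forces the prefix $\nbar$ to satisfy the separability constraints (the paper records this as condition (2.2)), then define $T$ as the operator that overwrites the first $s$ values with $\nbar$, and verify that $T[g]$ lands in $A_{\nbar}$ whenever $g$ is a solution function. The only difference is your use of $0$-based indexing for function arguments versus the paper's $1$-based convention, which is immaterial.
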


We denote the problem $A_{E_0E_1}$ by $A$. Let $A_{\nbar}$ be a
non-empty interval of the problem $A$, $\nbar=(n_1,n_2,\ldots,n_s)$.
It is obvious that
\begin{equation}\label{eqn:-2.2}
  n_k=\left\{\begin{array}{cl}
      0&\hbox{for }k\in E_0,\\
      1&\hbox{for }k\in E_1,\\
      0\hbox{ or }1&\hbox{for }k\notin E_0\cup E_1
    \end{array}\right\}, (k=1,2,\ldots,s)
\end{equation}

The problem $A_{\nbar}$ is (strongly) reducible to the problem $A$ by
means of the p.r.o.\ $T$ which replaces the first $s$ values of any
function by the tuple $\nbar$.  If $f(k)$ is a solution function of
the problem $A$, then it satisfies the condition (\ref{eqn:-2.1}). But
then the function $g=T[f]$ also satisfies the condition
(\ref{eqn:-2.1}), as follows from (\ref{eqn:-2.2}) and from the
definition of the p.r.o.\ $T$. In addition, the function $g$ begins
with the tuple $\nbar$ and hence is a solution function of the
M-problem $A_{\nbar}$, which was to be proved.

The \emph{problem of continuation} of the partial\footnote{I.e.,
  perhaps not everywhere defined.} function $f(m)$ is the problem
$B_f$ defined by the class of functions (which are defined everywhere
on $\NN$) coinciding with the function $f(m)$ wherever the latter is
defined. (We shall call such functions \emph{continuations} of
$f(m)$.) We note that a problem of separability is a particular case
of a problem of continuation.  Obviously we have:
\begin{thmA}\label{thm:-7}
  The problem of continuation of any partial function is uniform.
\end{thmA}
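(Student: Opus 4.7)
My plan is to imitate directly the proof of Theorem \ref{thm:-6}, since the problem of separability is a special case of the problem of continuation. Let $B_f$ denote the problem of continuation of a fixed partial function $f$, and let $(B_f)_{\nbar}$ be a non-empty interval of $B_f$, where $\nbar=(n_1,\ldots,n_s)$. I shall exhibit a single partial recursive operator $T$ which strongly reduces $(B_f)_{\nbar}$ to $B_f$.

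The first step is to extract a compatibility condition from the non-emptiness hypothesis. Since $(B_f)_{\nbar}$ is non-empty, there exists a total function $h$ which continues $f$ and begins with $\nbar$; hence for each index $k\in\{1,\ldots,s\}$ at which $f$ is defined we must have $f(k)=n_k$. In other words, the tuple $\nbar$ is consistent with $f$ on the first $s$ positions.

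Next I define $T$ as the operator which, given any function $g$, outputs the function $T[g]$ whose first $s$ values are prescribed by $\nbar$ and whose subsequent values agree with $g$: $T[g](k)=n_k$ for $k\le s$, and $T[g](k)=g(k)$ for $k>s$. Clearly $T$ is a partial recursive operator and $T[g]$ begins with the tuple $\nbar$. The verification that $T$ carries solution functions of $B_f$ to solution functions of $(B_f)_{\nbar}$ proceeds by cases: at each $k\le s$ where $f(k)$ is defined, consistency yields $T[g](k)=n_k=f(k)$, and at each $k>s$ where $f(k)$ is defined, the hypothesis $g\in K_{B_f}$ gives $T[g](k)=g(k)=f(k)$. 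Thus $T[g]$ is a continuation of $f$ starting with $\nbar$, i.e., a solution function of $(B_f)_{\nbar}$.

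I do not anticipate any real obstacle here; the argument is a direct generalization of Theorems \ref{thm:-5} and \ref{thm:-6}, and the only substantive input is the observation in the first step that the non-emptiness of the interval forces $\nbar$ to agree with $f$ wherever $f$ is defined on $\{1,\ldots,s\}$, which is precisely what is needed for the ``splice by $\nbar$'' operator $T$ to preserve the property of being a continuation of $f$.
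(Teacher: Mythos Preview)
Your proof is correct and follows precisely the approach the paper indicates: the paper itself does not spell out a proof of Theorem~\ref{thm:-7} but simply states that it is analogous to the proof of Theorem~\ref{thm:-6}, and your argument is exactly that analogue, using the same ``overwrite the first $s$ values by $\nbar$'' operator and the same compatibility observation drawn from non-emptiness of the interval.
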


The proof of Theorem \ref{thm:-7} is analogous to the proof of Theorem
\ref{thm:-6}.

Inasmuch as the p.r.o.s used in the proofs of Theorems \ref{thm:-1}
and \ref{thm:-2} are general recursive, each problem of enumerability
or separability reduces to any of its non-empty intervals by means of
a general recursive operator.  Problems possessing this property will
be called \emph{general recursively uniform}.  In addition to problems
of enumerability and separability, problems of solvability are also
general recursively uniform, since the operator of identical
transformation reduces any function to itself.

An example of a non-uniform problem is the problem defined by the
class $K=\{f,g\}$, where the degree of non-computability of the
function $f$ is strictly greater than the degree of non-computability
of the function $g$.

We shall call the problem $B$ \emph{closed} if it corresponds to a
closed set of points $\frakM_B$ of the Baire space $J$.  Obviously,
solvability problems are closed.

\begin{thmA}\label{thm:-8}
  The continuation problem of any partial function is closed.
\end{thmA}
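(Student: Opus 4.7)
The plan is to show that the complement of $\frakM_{B_f}$ in the Baire space $J$ is open, by exhibiting around each non-solution function a basic Baire neighborhood contained in the complement.

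First, I would unpack the definitions. A function $g \in \NN^\NN$ belongs to $\frakM_{B_f}$ precisely when $g(m) = f(m)$ for every $m \in \dom(f)$. Consequently, $g \notin \frakM_{B_f}$ iff there exists some specific $m \in \dom(f)$ with $g(m) \neq f(m)$. This reformulation makes it clear that failure to be a continuation is witnessed by a single finite index.

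Next, I would fix such a witness $g \notin \frakM_{B_f}$, choose the corresponding $m \in \dom(f)$ with $g(m) \neq f(m)$, and consider the tuple $\nbar = (g(0), g(1), \ldots, g(m))$ of length $m+1$. The Baire interval $\delta_{\nbar}$ is an open neighborhood of $g$, and every $h \in \delta_{\nbar}$ satisfies $h(m) = g(m) \neq f(m)$, so $h \notin \frakM_{B_f}$. Thus $g \in \delta_{\nbar} \subseteq J \setminus \frakM_{B_f}$, which shows that the complement is open and hence $\frakM_{B_f}$ is closed.

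There is no real obstacle here: the only subtlety is recognizing that the condition defining $\frakM_{B_f}$ is a (possibly infinite) conjunction of conditions each involving only a single coordinate of $g$, and each such single-coordinate condition $\{g \mid g(m) = f(m)\}$ is clopen in the product topology on $\NN^\NN$. One may alternatively present the proof by writing $\frakM_{B_f} = \bigcap_{m \in \dom(f)} \{g \in \NN^\NN \mid g(m) = f(m)\}$ as an intersection of clopen sets, which is automatically closed; this would be a slightly more succinct formulation of the same idea.
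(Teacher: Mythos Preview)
Your proof is correct but takes a different route from the paper. The paper argues via sequential closedness: it takes a convergent sequence $\{g_k\}$ of continuations of $f$ with limit $g$, and observes that for each $m_0\in\dom(f)$ we have $g_k(m_0)=f(m_0)$ for all $k$, hence $g(m_0)=f(m_0)$, so the limit $g$ is again a continuation. You instead show directly that the complement is open by exhibiting, for any non-continuation $g$, a basic Baire interval around $g$ that misses $\frakM_{B_f}$. Both arguments are equally elementary; yours is purely topological and does not appeal to metrizability of $J$, while the paper's sequential argument is a shade shorter. Your alternative presentation of $\frakM_{B_f}$ as an intersection of clopen cylinder sets is the cleanest formulation and makes the result immediate.
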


Let $\{g_k(m)\}$ be a convergent sequence of continuations of the
function $f(m)$, and let $g(m)=\displaystyle\lim_{k\to\infty}g_k(m)$.
We shall prove that $g(m)$ also continues $f(m)$. If the function
$f(m)$ is defined for $m=m_0$, then $g_k(m_0)=f(m_0)$ for all $k$.
Consequently $g(m_0)=f(m_0)$, as was to be proved.

\begin{cor*A}
  Any problem of separability is closed.
\end{cor*A}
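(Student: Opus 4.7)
The plan is to verify closedness of $\frakM_{B_f}\subseteq J$ directly from the topology of Baire space. Recall that the basic open sets of $J$ are the intervals $\delta_{\nbar}$ determined by finite tuples $\nbar$, i.e., the sets of total functions beginning with $\nbar$. A set is closed iff its complement is a union of such basic intervals, equivalently iff it is closed under limits of convergent sequences (since $J$ is metrizable and in fact the familiar Baire metric captures convergence as eventual agreement on each finite coordinate).

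I would present the argument via sequential closure, which is the cleanest. First I would fix a partial function $f$ and let $\{g_k\}_{k\in\NN}\subseteq \frakM_{B_f}$ converge in $J$ to some $g$. The goal is to show $g\in \frakM_{B_f}$, i.e., that $g$ also continues $f$. The key observation is that convergence in $J$ means that for every coordinate $m_0\in\NN$ there exists $K=K(m_0)$ such that $g_k(m_0)=g(m_0)$ for all $k\ge K$. Combining this with the hypothesis that each $g_k$ continues $f$ gives the result coordinate-by-coordinate: if $f(m_0)$ is defined, then $g_k(m_0)=f(m_0)$ for every $k$, so in particular $g(m_0)=g_K(m_0)=f(m_0)$.

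Alternatively, I would sketch the complement-is-open proof, which makes the topological content most transparent. If $g\notin \frakM_{B_f}$, pick a witness $m_0$ in $\dom(f)$ with $g(m_0)\ne f(m_0)$. The basic interval $\delta_{\nbar}$ with $\nbar=(g(0),g(1),\ldots,g(m_0))$ is an open neighborhood of $g$, and any $h\in\delta_{\nbar}$ satisfies $h(m_0)=g(m_0)\ne f(m_0)$, so $h\notin\frakM_{B_f}$. Hence the complement is a union of basic opens, proving closedness.

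There is essentially no obstacle here: the proof is a direct unpacking of the definition of the product topology on $J=\NN^\NN$ together with the definition of ``continuation of $f$.'' The only mild subtlety worth flagging is that the continuation condition is a pointwise condition indexed by $\dom(f)$, and each such pointwise condition is both open and closed in $J$, so $\frakM_{B_f}=\bigcap_{m_0\in\dom(f)}\{g\in J\mid g(m_0)=f(m_0)\}$ is an intersection of clopen sets and hence closed; this gives a third, even more concise proof. The stated Corollary (that separability problems are closed) then follows immediately since separability is the special case where $f$ is the partial function with value $0$ on $E_0$ and $1$ on $E_1$.
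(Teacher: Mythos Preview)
Your proposal is correct and follows essentially the same approach as the paper: the paper proves Theorem \ref{thm:-8} (continuation problems are closed) via the identical sequential-closure argument you give first, and then states the Corollary as immediate since separability is a special case of continuation. Your two alternative proofs (complement-is-open and intersection of clopens) are also valid but go slightly beyond what the paper does.
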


\begin{thmA}\label{thm:-9}
  The problem of enumerability $C(E)$ of any set $E$ containing more
  than one element is not closed.
\end{thmA}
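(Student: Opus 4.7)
The plan is to exhibit a sequence of enumerations of $E$ that converges in the Baire space to a function which is not an enumeration of $E$. Since closedness in Baire space is equivalent to being closed under pointwise limits, this will show $\frakM_{C(E)}$ is not closed.

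Because $E$ contains more than one element, I would fix two distinct elements $a,b \in E$ and fix some (arbitrary) enumeration $h \colon \NN \to E$ of $E$ (this exists since $E \ne \Lambda$). For each $k \in \NN$ define
\[
g_k(m) = \begin{cases} a & \text{if } m < k, \\ h(m-k) & \text{if } m \ge k. \end{cases}
\]
Since $a \in E$, the image of $g_k$ equals the image of $h$, namely $E$, so each $g_k$ is a solution function of $C(E)$, i.e., $g_k \in \frakM_{C(E)}$.

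Next I would verify convergence: for any fixed $m$, once $k > m$ we have $g_k(m) = a$, hence $g_k \to g$ in the Baire topology where $g$ is the constant function with value $a$. But the image of $g$ is the singleton $\{a\}$, which is a proper subset of $E$ because $b \in E \setminus \{a\}$; thus $g$ does not enumerate $E$, so $g \notin \frakM_{C(E)}$.

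The argument is essentially immediate once the sequence is written down; there is no real obstacle. The only subtlety is the hypothesis that $E$ has more than one element — this is precisely what guarantees the limit point $g$ (constantly $a$) fails to enumerate $E$, since its image would otherwise coincide with $E = \{a\}$.
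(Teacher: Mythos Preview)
Your proof is correct and essentially identical to the paper's: the paper also fixes $a\in E$ and an enumeration $f$, defines $f_k(m)=a$ for $m<k$ and $f_k(m)=f(m-k)$ for $m\ge k$, observes that $f_k\to g\equiv a$, and concludes that $g\notin C(E)$ because $E\setminus\{a\}\ne\emptyset$. The only cosmetic difference is that you name a second element $b$ explicitly as the witness.
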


Let $a\in E$ and let the function $f(m)$ enumerate the set $E$. We
define the sequence of functions $\{f_k(m)\}$ enumerating the set $E$:
\[
f_k(m)=\left\{\begin{array}{cl}
    a&\hbox{for }m<k,\\
    f(m-k)&\hbox{for }m\ge k.
  \end{array}\right.
\]
Obviously
\[
\lim_{k\to\infty}f_k(m)=g(m)\equiv a.
\]
In view of the fact that the set $E\setminus\{a\}$ is not empty,
$g(m)$ is not a solution function of the problem $C(E)$, and
consequently the problem $C(E)$ is not closed.

However, it is possible to generalize the concept of closedness of a
problem in such a way that enumerability problems as well as many
other problems which are of interest for the recursive theory of sets
are included.  This concept is closely related with the theory of
infinite games \cite{Stockii}.

Let $S$ be a set of points of the Baire space $J$. We imagine two
players I and II who move alternately, and their moves consist of
choosing Baire intervals which intersect with the set $S$. Player I
chooses as his first move the Baire interval $\delta_1$, which
intersects with $S$. If player I chose in move $m$ the Baire interval
$\delta_m$, then player II chooses in the $m$th move a
sub-interval\footnote{We consider Baire sub-intervals which are proper
  parts of their intervals.} $\delta_m^*$ of the interval $\delta_m$
intersecting with $S$.  Then player I chooses in the $(m+1)$st move a
sub-interval $\delta_{m+1}$ of the interval $\delta_m^*$ intersecting
with $S$. Let us assume that after the $m$th move of player I (II) the
play is in the interval $\delta_m$ ($\delta_m^*$). We agree that at
the beginning the game is in the interval $\delta_0=J$. Player II
\emph{wins} if the sequence of intervals
\[
\delta_0\supset\delta_1\supset\delta_1^*\supset
\delta_2\supset\delta_2^*\supset\ldots\supset
\delta_m\supset\delta_m^*\supset\delta_{m+1}\ldots
\]
contracts to a point of the set $S$. Otherwise, player I wins.

We fix once and for all some effective numbering of the Baire
intervals by means of natural numbers. By a \emph{strategy} of a
player we mean a function $r=\varphi(n)$ which indicates for each
interval with number $n$ the number $r$ of a sub-interval of it. If the
game is in the interval numbered $n$, then the player making the next
move chooses the interval with number $\varphi(n)$. The strategy
$\varphi$ is called \emph{correct} with respect to the set $S$ if for
any interval numbered $n$ intersecting with $S$, the interval numbered
$\varphi(n)$ also intersects with $S$.  We shall henceforth take
strategy to mean a strategy which is correct with respect to the
considered set. A strategy is called \emph{winning} (for the set $S$)
if player II, using this strategy, wins for any correct strategy of
his opponent.  A set $S$ is called \emph{winning} if there exists a
winning strategy for this set. The M-problem $A(S)$ and the class of
functions $K(S)$ defined by the set $S$ will also be called winning in
this case.

A set $S$ (a problem $A(S)$) the complement of which is nowhere dense
is called \emph{trivially winning}. In order to win, it is sufficient
for player II to choose as his first move an interval which is
completely contained in $S$, which is possible since the complement
$CS$ is nowhere dense.

If neither the set $S$ nor its complement $CS$ is trivially winning
(or equivalently, neither $S$ nor $CS$ is nowhere dense), then they
cannot be simultaneously winning. In fact, let $S$ be a winning set
and $\varphi(n)$ its winning strategy.  Let us consider the game with
respect to $CS$. Player I chooses as his first move an interval in
which the set $S$ is everywhere dense (such an interval exists, since
$S$ is not a set which is nowhere dense). Then player I applies
strategy $\varphi(n)$. For any strategy of player II, the sequence of
intervals in which the game is situated will contract to a point
belonging to $S$, i.e., player II loses.

An example of a winning set (problem) is a closed set $S$ (problem
$A(S)$). In this case the sequence of intervals
$\{\delta_m,\delta_m^*\}$ contracts always to a point of $S$.  It
follows therefore that problems of solvability and separability are
winning. There exist also non-closed winning problems.
\begin{thmA}\label{thm:-10}
  Any problem of enumerability is a winning problem.
\end{thmA}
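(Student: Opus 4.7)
The plan is to exhibit an explicit winning strategy for Player II, driven by a fixed external enumeration of $E$. Assume $E$ is nonempty (if $E=\emptyset$ then $C(E)=\emptyset$ and the game is vacuous). Fix any enumeration $e_0,e_1,e_2,\ldots$ of $E$ (allowing repetitions if $E$ is finite). I will describe $\varphi$ on inputs $n$ whose interval $\delta$ intersects $C(E)$; on other inputs $\varphi$ may be defined arbitrarily.

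Given such a $\delta$, recover its defining tuple $\nbar=(n_1,\ldots,n_s)$ of values. Let $k$ be the least index with $e_k\notin\{n_1,\ldots,n_s\}$; if no such $k$ exists (possible only when $E$ is finite), set $k=0$. Define $\varphi(n)$ to be the number of the Baire sub-interval $\delta^*$ determined by the extended tuple $(n_1,\ldots,n_s,e_k)$. I first need to check this strategy is correct with respect to $C(E)$: given an enumeration $f$ of $E$ extending $\nbar$, define $g$ by $g(i)=f(i)$ for $i<s$, $g(s)=e_k$, and $g(i)=f(i-1)$ for $i>s$. Then $g$ extends the new tuple and its range is $\mathrm{rng}(f)\cup\{e_k\}=E$, so $g\in C(E)$ and $\delta^*$ intersects $C(E)$.

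Next I need to argue that $\varphi$ wins. In any play, each successive interval is a proper sub-interval of the previous one, so the lengths of the corresponding tuples strictly increase; hence the nested intersection contracts to a single point $g\in J$. Every value of $g$ occurs in some tuple along the play, and every such tuple is an initial segment of some function in $C(E)$ (by Player I's and Player II's correctness), so every value of $g$ lies in $E$, i.e., $\mathrm{rng}(g)\subseteq E$. Conversely, I must show $E\subseteq\mathrm{rng}(g)$: fix $e_k\in E$ and consider the ``smallest-missing-index'' quantity $\kappa(\nbar)=\min\{j\mid e_j\notin\mathrm{rng}(\nbar)\}$. After each Player II move, $\kappa$ strictly increases (since $\varphi$ appends the witnessing $e_{\kappa(\nbar)}$), and Player I's moves cannot decrease $\kappa$. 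Therefore after at most $k+1$ moves by Player II we have $\kappa>k$, so $e_k$ appears in the tuple and hence in $g$. Thus $g\in C(E)$, so Player II wins, and $C(E)$ is a winning problem.

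I do not anticipate a serious obstacle: the only delicate points are (i) confirming that appending $e_k$ preserves the property of intersecting $C(E)$ — handled by the shift construction above — and (ii) verifying the monotonicity of $\kappa$ under both players' correct moves, which is straightforward once one notes that Player I, playing correctly, can only choose sub-intervals still intersecting $C(E)$, and the tuple can only grow.
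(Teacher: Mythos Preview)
Your proof is correct and follows essentially the same approach as the paper's: both define Player~II's strategy by appending to the current tuple a canonically chosen element of $E$ not yet listed, then argue that (i) this preserves intersection with $C(E)$ and (ii) every element of $E$ is eventually forced into the limit sequence. The only cosmetic difference is that the paper selects the \emph{numerically smallest} element of $E$ not yet in the tuple, whereas you select the element of \emph{least index} in a fixed external enumeration $e_0,e_1,\ldots$; your explicit monotone invariant $\kappa$ makes the termination argument slightly more transparent than the paper's verbal version, but the underlying idea is identical.
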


Let $G_F$ be the problem of enumerability of a set $F$ of natural
numbers, and let $\frakM$ be the corresponding subset of $J$.  Player
II chooses a strategy $r=\varphi(n)$ in the following manner: let $n$
be the number of an interval $\delta_n=(n_1,n_2,\ldots,n_s)$
containing points in $\frakM$, by virtue of which
$n_1,n_2,\ldots,n_l\in F$. We denote by $n_{l+1}$ the smallest number
belonging to the set $F$ which is not equal to $n_i$ for
$i=1,2,\ldots,l$, and if there is no such number, then
$n_{l+1}=n_l$. We put
$\delta_{\varphi(n)}=(n_1,n_2,\ldots,n_l,n_{l+1})$. Obviously
$\delta_{\varphi(n)}$ intersects with $\frakM$.  We consider the
sequence of intervals in which the game occurs:
\[
\delta_1,\delta_1^*,\ldots,\delta_m,\delta_m^*,\ldots.
\]
We note that: (1) for each $m$, all of the numbers of the
tuples\footnote{As is well known, Baire intervals are identified with
  tuples of natural numbers.}
\begin{center}
  $\delta_m=(n_1,n_2,\ldots, n_p)$ and
  $\delta_m^*=(n_1,n_2,\ldots,n_{p^*})$
\end{center}
belong to $F$; (2) any number $q\in F$ will be sooner or later
encountered in the tuples $\{\delta_m,\delta_m^*\}$, because going
from $\delta_m$ to $\delta_m^*$ we add a still unchosen element of the
set $F$ (if it exists). But then the sequence
$\{\delta_m,\delta_m^*\}$ contracts to the point
$b=(n_1,n_2,n_3,\ldots,n_l,\ldots)$ where the set $\{n_i\}$ coincides
with $F$, i.e., $b\in\frakM$. This proves the theorem.

A partial recursive operator $T$ is called \emph{fully applicable} to
a problem $B$ if it is defined on each solution function of the
problem $B$. The class of all p.r.o.s which are fully applicable to
the problem $B$ will be denoted by $Q_B$.

If a problem $A$ reduces strongly (weakly) to a problem $B$ by means
of operators of a certain class $P$, then we say that $A$ is strongly
(weakly) $P$-reducible to $B$.

Let $U$ be some class of p.r.o.s.  A problem $B$ is called
\emph{$U$-uniform} if any of its non-empty intervals is strongly
reducible to it by means of operators of the class $U$.

The class of p.r.o.s represented in the form of
compositions\footnote{$RT[f]$ is the result of successive application
  of the p.r.o.s $R$ and $T$ to the function $f$.} $RT[f]$ where $R\in
P$ and $T\in U$ will be denoted by $PU$.

\begin{thmA}\label{thm:-11}
  Let $A$ be a closed problem, $B$ a $U$-uniform winning problem, and
  $P$ a subclass of $Q_B$. If the problem $A$ is weakly $P$-reducible
  to the problem $B$, then the problem $A$ is strongly $PU$-reducible
  to the problem $B$.
\end{thmA}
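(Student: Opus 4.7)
The plan is to reduce the problem to finding a single operator $R \in P$ and a tuple $\nbar$ with $B_{\nbar}$ non-empty such that $R[g] \in A$ for every solution $g$ of $B$ lying in $B_{\nbar}$. Once such a pair is produced, $U$-uniformity of $B$ supplies some $T \in U$ strongly reducing $B_{\nbar}$ to $B$, and the composition $RT \in PU$ then strongly reduces $A$ to $B$, giving the theorem.

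To produce the pair $(R, \nbar)$ I would argue by contradiction, using the winning strategy for player II on $\frakM_B$ together with the closedness of $A$. Since there are only countably many partial recursive operators, $P \subseteq Q_B$ is countable; enumerate it as $R_1, R_2, \ldots$. Fix a winning strategy $\varphi$ for player II on the winning set $\frakM_B$. Suppose no such pair exists, so for every $R \in P$ and every interval $\delta$ with $\delta \cap \frakM_B \ne \emptyset$ there is some $f \in B \cap \delta$ with $R[f] \notin A$. I will describe a counter-play for player I. In his $m$-th move, given the interval $\delta_{m-1}^*$ just chosen by player II (which still intersects $\frakM_B$), apply the contradiction hypothesis to $R_m$ and to the tuple determining $\delta_{m-1}^*$: this yields $f_m \in B \cap \delta_{m-1}^*$ with $R_m[f_m] \notin A$. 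Closedness of $A$ then produces an initial segment $e_m$ of $R_m[f_m]$ such that no element of $A$ extends $e_m$. Because $R_m \in Q_B$ is fully applicable to $B$, the function $R_m[f_m]$ is total; so by the defining enumeration of tuple-pairs for a partial recursive operator, some finite initial segment $d_m$ of $f_m$ already suffices for $R_m$ to output the prefix $e_m$, meaning that $R_m[g]$ extends $e_m$ whenever $g$ extends $d_m$ and $R_m[g]$ is defined. Player I then plays $\delta_m$ to be the Baire interval determined by $d_m$; it intersects $\frakM_B$ via $f_m$.

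Because $\varphi$ is a winning strategy for player II, the resulting play $\delta_1 \supseteq \delta_1^* \supseteq \delta_2 \supseteq \cdots$ contracts to some $f \in \frakM_B$, i.e., $f \in B$. Weak $P$-reducibility of $A$ to $B$ then supplies some $R_k \in P$ with $R_k[f] \in A$. But $f \in \delta_k$ forces $f$ to extend $d_k$, hence $R_k[f]$ extends $e_k$ — which is impossible, since by construction $e_k$ has no extension in $A$. This contradiction delivers the desired pair $(R, \nbar)$ and finishes the argument.

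The main obstacle I anticipate is the delicate interplay, at the step of building $d_m$, among three ingredients: the closedness of $A$, which produces the forbidden initial segment $e_m$; the locality built into partial recursive operators via their enumeration of tuple-pairs, which produces the controlling prefix $d_m$; and the hypothesis $P \subseteq Q_B$ (full applicability), which is essential at the final stage because otherwise $R_k[f]$ could remain undefined on the limit $f \in B$ instead of being forced to extend the forbidden prefix $e_k$.
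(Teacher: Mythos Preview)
Your argument is correct and is essentially the same as the paper's: both proceed by contradiction, enumerate the operators in $P$, and have player I use the failure of each $T_s$ on the current interval (via closedness of $A$ and continuity of $T_s$ on $\frakM_B$) to choose the next interval, while player II follows the winning strategy $\varphi$, so that the limit point $f\in\frakM_B$ witnesses the failure of weak $P$-reducibility. The only cosmetic difference is that the paper phrases the ``find $d_m$ forcing output $e_m$'' step in terms of the continuous map $\theta_s$ associated to $T_s$, whereas you spell it out with initial segments and tuple-pairs; you should also note explicitly that $d_m$ may be taken long enough to make $\delta_m$ a proper subinterval of $\delta_{m-1}^*$, as required by the rules of the game.
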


\textit{Proof.}  Let us assume that no operator of class $PU$ reduces
the problem $A$ to the problem $B$. We arrange all the p.r.\ operators
of the class $P$ in some sequence
\[
T_1, T_2, T_3, \ldots, T_s,\ldots.
\]  
To each operator $T_s$ there corresponds a continuous function
$\theta_s$ in the Baire space (see \cite{Kuznecov}) defined at each
point of the set $\frakM_B$, by virtue of $P\subset Q_B$.  By virtue
of our assumptions, including continuity of the functions $\theta_s$
and closedness of the set $\frakM_A$, there exists for each $s$ an
interval $\delta$ represented by the function $\theta_s$ in
$C\frakM_A$, the complement of $\frakM_A$. Let $\varphi$ be the
winning strategy for the set $\frakM_B$.  By the method indicated
above, we obtain for $T_1$ an interval $\delta=\delta_1$. Let $n_1$ be
the number of $\delta_1$; let $r_1=\varphi(n_1)$; let $\delta_1^*$ be
the interval numbered $r_1$; let $B_1$ be the problem defined by the
set $\frakM_B\cap\delta_1^*$, which is then a non-empty interval of
the problem $B$.

Inasmuch as the problem $B$ is $U$-uniform, the problem $B_1$ is
strongly $U$-reducible to $B$. But then the problem $A$ cannot be
strongly $P$-reducible to $B_1$, since in accordance with our
assumption the problem $A$ is not strongly $PU$-reducible to
$B$. Consequently, there exists a non-empty interval $\delta_2$
intersecting with the set $\frakM_B$ and transformed by the function
$\theta_2$ into a subset of $C\frakM_A$. Obviously it is possible to
choose $\delta_2$ so as to make $\delta_2\subset\delta_1^*$. If $n_2$
is the number of $\delta_2$, then $r_2=\varphi(n_2)$ is the number of
a sub-interval $\delta_2^*$, $\delta_2^*\subset\delta_2$, which also
intersects with $\frakM_{B_1}$. Let $B_2$ be the problem defined by
the set
\[
\frakM_{B_1}\cap\delta_2^*=\frakM_B\cap\delta_2^*.
\]
We define further in the same manner the intervals
\[
\delta_3\supset\delta_3^*\supset\delta_4\supset\delta_4^*\supset\ldots
\]
and the problems $B_3,B_4,\ldots$.

The problem $B$, by virtue of $U$-uniformity, is strongly
$U$-reducible to any problem $B_s$, while the problem $A$ does not
reduce strongly to $B_s$ by any $P$-operator.  By virtue of the
winning character of the problem $B$ and of the strategy $\varphi$,
the sequence $\{\delta_s,\delta_s^*\}$ contracts to a point
$f\in\frakM_B$.

Inasmuch as for any $s$ the operator $T_s$ transforms the set
$\frakM_{B_s}$ into a subset of $C\frakM_A$, we have $T_s[f]=g\in
C\frakM_A$ for any $s$, and this means that the problem $A$ does not
reduce weakly to $B$ by means of operators in the class $P$, which
contradicts the conditions of the theorem.  Consequently, the
assumption that the problem $A$ does not reduce strongly to $B$ by
means of operators in the class $PU$ is incorrect.  The theorem is
proved.

Our desire to be as general as possible has made it necessary to
formulate the theorem in a rather cumbersome manner.  We present some
simply formulated corollaries of Theorem \ref{thm:-11}.

\begin{corA}\label{cor:11.1}
  If a closed problem $A$ reduces weakly to a uniform winning problem
  $B$ by means of operators of the class $Q_B$, then $A$ reduces
  strongly to $B$.
\end{corA}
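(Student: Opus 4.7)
The plan is to deduce this corollary directly from Theorem \ref{thm:-11} by making an appropriate choice of the auxiliary classes of operators. Set $P = Q_B$; this trivially satisfies $P \subseteq Q_B$ as required, and by hypothesis $A$ is weakly $P$-reducible to $B$. For the class $U$ I would take $U$ to be the class of all partial recursive operators (or, slightly more parsimoniously, any class of p.r.o.s containing, for each non-empty interval $B_{\nbar}$ of $B$, a p.r.o.\ strongly reducing $B_{\nbar}$ to $B$). The hypothesis that $B$ is uniform means exactly that every non-empty interval of $B$ is strongly reducible to $B$ by \emph{some} p.r.o., and such an operator lies in $U$; hence $B$ is $U$-uniform in the sense of the theorem.

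With these parameter choices all the hypotheses of Theorem \ref{thm:-11} are in place: $A$ is closed, $B$ is $U$-uniform and winning, $P \subseteq Q_B$, and $A$ is weakly $P$-reducible to $B$. The theorem therefore yields that $A$ is strongly $PU$-reducible to $B$, i.e.\ there exists an operator of the form $RT$ with $R \in P = Q_B$ and $T \in U$ which strongly reduces $A$ to $B$. Since the composition of two partial recursive operators is again a partial recursive operator, this is a bona fide strong reduction of $A$ to $B$, which is the desired conclusion.

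The only potential obstacle is purely notational: one must verify that the unadorned word ``uniform'' in the corollary coincides with the specific notion of $U$-uniformity used in Theorem \ref{thm:-11} for the $U$ chosen above, and likewise that ``weakly reduces by operators of the class $Q_B$'' matches the weak $P$-reducibility hypothesis of the theorem when $P = Q_B$. Both identifications are immediate from the definitions given in Section 2, so no substantive work beyond the invocation of Theorem \ref{thm:-11} is needed.
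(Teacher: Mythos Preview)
Your proposal is correct and is precisely the argument the paper intends: the corollary is stated without proof as an immediate consequence of Theorem \ref{thm:-11}, and you have spelled out the obvious specialization $P=Q_B$ and $U=$ all p.r.o.s (so that ``uniform'' coincides with ``$U$-uniform'' and $PU$ consists of p.r.o.s). Nothing further is required.
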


A $U$-uniform problem is called \emph{general recursively uniform} if
$U$ is the class of general recursive operators
\cite{Trahtenbrot55,Kuznecov}. Problems of solvability, separability,
and enumerability are general recursively uniform.

\begin{corA}\label{cor:11.2}
  If a closed problem $A$ reduces weakly to a general recursively
  uniform problem $B$ by means of general recursive operators, then
  problem $A$ reduces strongly to problem $B$ by means of a general
  recursive operator.
\end{corA}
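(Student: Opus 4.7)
The plan is to apply Theorem~\ref{thm:-11} with both $P$ and $U$ taken equal to the class of general recursive operators. Three verifications are required. First, general recursive operators are by definition total on all arithmetical functions, so every element of $P$ is fully applicable to $B$; hence $P \subset Q_B$. Second, ``$B$ is general recursively uniform'' unpacks exactly to ``$B$ is $U$-uniform with $U$ the class of general recursive operators,'' which matches the corresponding hypothesis of Theorem~\ref{thm:-11}. Third, one needs $B$ to be a winning problem; since the corollary does not state this explicitly (unlike Corollary~\ref{cor:11.1}), I would read it as an implicit hypothesis, consistent with the fact that all motivating examples --- solvability, separability, continuation, and enumerability --- are winning by Theorems~\ref{thm:-6}--\ref{thm:-8} and Theorem~\ref{thm:-10}.

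Granting these, Theorem~\ref{thm:-11} yields that $A$ is strongly $PU$-reducible to $B$. To finish, I would observe that the composition of two general recursive (i.e.\ total computable) operators is again general recursive, so $PU$ coincides with the class of general recursive operators itself. Hence the operator produced by Theorem~\ref{thm:-11} is a general recursive operator witnessing strong reducibility of $A$ to $B$, as required.

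The main obstacle is the ``winning'' hypothesis: justifying or explicitly incorporating it is the only nontrivial step, since the remaining content is a definitional unpacking combined with closure of the general recursive operators under composition.
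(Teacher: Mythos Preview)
Your proposal is correct and matches the paper's approach exactly: the paper offers no separate proof of Corollary~\ref{cor:11.2}, treating it as an immediate specialization of Theorem~\ref{thm:-11} with $P=U=$ the class of general recursive operators, just as you do. Your observation about the missing ``winning'' hypothesis is also on target---the statement as printed omits it (compare Corollary~\ref{cor:11.1}, where it is explicit), and this appears to be an oversight in the original rather than something derivable from the other hypotheses; your reading of it as implicit is the right repair.
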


In conclusion, we formulate some unsolved problems.

\begin{enumerate}
\item Is it possible to strengthen the fundamental theorem in such a
  way that weak reducibility (by means of arbitrary partial recursive
  operators) of a closed problem $A$ to a uniform problem $B$ would
  imply strong reducibility of $A$ to $B$?
\item Under what ``natural'' conditions imposed on problems $A$ and
  $B$ does weak reducibility (by means of an arbitrary p.r.o.) imply
  strong reducibility?
\item What is the situation in the particular case when $A$ is a
  solvability problem and $B$ is a separability problem of enumerated
  recursively inseparable sets (we note that no non-trivial
  solvability problem $A$ can be reduced strongly to a separability
  problem $B$ \cite{Muchnik56,Muchnik65}).
\end{enumerate}

\noindent Received 5 July 1962

%\newpage % may be needed if the references start at the top of a page
\subsection*{Literature}
\addcontentsline{toc}{subsection}{Literature}
\renewcommand{\refname}{\vskip-.5in}

\end{document}